\numberwithin{equation}{section}
\newtheorem{Theorem}{Theorem}[section]
\newtheorem*{Theorem*}{Theorem}
\newtheorem{Lemma}[Theorem]{Lemma}
\newtheorem{Claim}[Theorem]{Claim}
\newtheorem{Proposition}[Theorem]{Proposition}
 { \theoremstyle{definition}

\newtheorem{Remark}[Theorem]{Remark} }
\begin{document}

\allowdisplaybreaks

\renewcommand{\thefootnote}{}

\newcommand{\arXivNumber}{2301.12385}

\renewcommand{\PaperNumber}{070}

\FirstPageHeading

\ShortArticleName{Symplectic Double Extensions for Restricted Quasi-Frobenius Lie (Super)Algebras}

\ArticleName{Symplectic Double Extensions for Restricted\\ Quasi-Frobenius Lie (Super)Algebras\footnote{This paper is a~contribution to the Special Issue on Differential Geometry Inspired by Mathematical Physics in honor of Jean-Pierre Bourguignon for his 75th birthday. The~full collection is available at \href{https://www.emis.de/journals/SIGMA/Bourguignon.html}{https://www.emis.de/journals/SIGMA/Bourguignon.html}}}

\Author{Sofiane BOUARROUDJ~$^{\rm a}$, Quentin EHRET~$^{\rm b}$ and Yoshiaki MAEDA~$^{\rm c}$}

\AuthorNameForHeading{S.~Bouarroudj, Q.~Ehret and Y.~Maeda}

\Address{$^{\rm a)}$~Division of Science and Mathematics, New York University Abu Dhabi,\\
\hphantom{$^{\rm a)}$}~P.O.~Box 129188, Abu Dhabi, United Arab Emirates}
\EmailD{\href{mailto:sofiane.bouarroudj@nyu.edu}{sofiane.bouarroudj@nyu.edu}}

\Address{$^{\rm b)}$~University of Haute-Alsace, IRIMAS UR 7499, F-68100 Mulhouse, France}
\EmailD{\href{mailto:quentin.ehret@uha.fr}{quentin.ehret@uha.fr}}

\Address{$^{\rm c)}$~Tohoku Forum for Creativity, Tohoku University,
 2-1-1, Katahira, Aoba-ku, Sendai, Japan}
\EmailD{\href{mailto:ymkeiomath@gmail.com}{ymkeiomath@gmail.com}}

\ArticleDates{Received January 31, 2023, in final form September 11, 2023; Published online September 28, 2023}

\Abstract{In this paper, we present a method of symplectic double extensions for restricted quasi-Frobenius Lie superalgebras. Certain cocycles in the restricted cohomology represent obstructions to symplectic double extension, which we fully describe. We found a necessary condition for which a restricted quasi-Frobenius Lie superalgebras is a symplectic double extension of a smaller restricted Lie superalgebra. The constructions are illustrated with a~few examples.}

\Keywords{restricted Lie (super)algebra; quasi-Frobenius Lie (super)algebra; double extension}

\Classification{17B50; 17B20}

\renewcommand{\thefootnote}{\arabic{footnote}}
\setcounter{footnote}{0}

\section{Introduction} 

A Lie group $G$ is called symplectic if it has a left-invariant closed 2-form whose rank is equal to $\dim(G)$. In this case, $\mathfrak{g} = \mathrm{Lie}(G)$ would be a quasi-Frobenius Lie algebra. Specifically, there exists $\omega \in Z^2(\mathfrak{g})$ such that $\omega$ is non-degenerate, see Section \ref{qFrobenius} for more details. Symplectic Lie groups include, for example, abelian groups of even dimension and the group of affine transformations of ${\mathbb K}^n$, for $n\geq 1$, where ${\mathbb K}={\mathbb R}$ or ${\mathbb C}$, see \cite{BC, MR2}.

Medina and Revoy classified symplectic nilpotent groups by their Lie algebras in \cite{MR2}. Symplectic double extensions were introduced in their work, and they showed that each nilpotent quasi-Frobenius Lie algebra can be obtained by a sequence of symplectic double extensions from the trivial Lie algebra. As a group, $(G, \omega)$ is a double extension of the group $(H, \Omega)$, meaning~$(H, \Omega)$ is a Marsden--Weinstein reduced manifold of $(G, \omega)$. Therefore, symplectic nilmanifolds can be obtained by symplectic double extensions.

The notion of symplectic double extension is the symplectic analog to the notion of double extension studied earlier in \cite{FS, MR1} when the bilinear form is non-degenerate, invariant and symmetric (abbreviated NIS in \cite{BKLS}). In contrast to usual double extensions, there are some cohomological obstructions to carry out symplectic double extensions, as discussed in \cite{MR2}.

The construction was later generalized to arbitrary symplectic Lie groups by Dardi\'e and Medina \cite{DM1}. They showed that such groups can be described in terms of semi-direct products of Lie groups or symplectic reduction and principal fiber bundles with affine fiber. Furthermore, they showed that each group obtained by this process carries an invariant Lagrangian foliation such that the affine structure defined by the symplectic form over each leaf is complete with respect to the affine structure deduced from that of the symplectic group.

In \cite{DM1}, several examples of symplectic double extensions were provided. In particular, symplectic Lie groups containing a closed normal subgroup of codimension 1 that is either the abelian group or the Heisenberg group are obtained through the process of symplectic double extension.\looseness=1

The structure theory of symplectic Lie groups was further studied in depth by Baues and Cort\'es in \cite{BC}. In their work, they introduced the concepts of symplectic {\it reduction} and symplectic {\it oxidation}. Basically, these two notions reverse each other, and the notion of symplectic oxidation is close to the notion of generalized double extensions (compare \cite[Section 2.3]{BC} and~\cite[Th\'eor\`eme~2.7]{DM1}). It was shown that every symplectic Lie group admits a sequence of subsequent symplectic reductions leading to a unique irreducible symplectic Lie group (referred to as the {\it symplectic basis}). Such a result was described in \cite{BC} as a Jordan--H\"older type uniqueness theorem. In the Lie algebra context, symplectic reduction is associated with the study of a reduction with respect to isotropic ideals. Further, it was shown that the isomorphism class of the simply connected symplectic basis is independent of the chosen reduction sequence. Another remarkable result of \cite{BC} is that all symplectic Lie groups of dimension less than or equal to 6 have a~Lagrangian subgroup; moreover, there is an eight-dimensional symplectic Lie group without a~Lagrangian subgroup.

In \cite{BFLM}, Bazzoni et al.\ introduced the notion of complex symplectic oxidation, and obtained certain complex symplectic Lie algebras of dimension $4n +4$ from those of dimension $4n$. The method has been used to describe all nilpotent complex symplectic Lie algebras in eight dimensions.

Recently, Bajo and Benayadi in \cite{BaBe}, generalized the idea of symplectic double extensions studied in \cite{DM2} even when the center is trivial in the case of an abelian para-K\"ahler Lie algebra~$\mathfrak{g}$ which is automatically solvable. They also showed that such a Lie algebra can be obtained by a~sequence of generalized double extensions from the affine Lie algebra. In fact, the case where~$\mathfrak{g}$ is solvable but not necessarily abelian para-K\"ahler has not yet been solved.

It is shown in \cite{BM} that these results can be superized for Lie superalgebras. In this case, there are four cases to consider: either the bilinear form or the derivation can be even or odd. It was also shown in \cite{BM} that Filiform Lie superalgebras are quasi-Frobenius, and the 4-dimensional quasi-Frobenius Lie superalgebras are classified based on Backhouse's classification \cite{Ba}. As far as we know, nothing has been done about supergroups.

Lie groups and Lie algebras play an important role in geometry, they provide relevant examples of manifolds and an algebraic characterization of some structures.

\subsection{Restricted Lie superalgebras and double extensions}
As far as we know, Jacobson introduced the concept of a restricted Lie algebra
\cite{J}. Roughly speaking, one requires the existence of an endomorphism on the modular
Lie algebra that resembles the $p$-th power mapping $x \mapsto x^p$ in associative algebras. Lie algebras associated with algebraic groups over fields with positive characteristic are restricted, and this class resembles the characteristic 0 case, see \cite{S, SF}. Superization
of the notion of restrictedness was studied by several authors, see, for example, \cite{BKLS, Fa, P,SZ, U,Y, YCC, Z} and especially \cite{BLLS}, where new phenomena were observed in characteristic 2. The restrictedness of simple (and close to simple) Lie superalgebras with Cartan matrices was fully studied in \cite{BKLLS}.

Cohomology theory for restricted Lie algebras was introduced by Hochschild \cite{H}. While higher order cohomology is difficult to calculate, it can be calculated at lower order, see \cite{EF}. An attempt of a superization of the restricted cohomology was given in \cite{YCC}. These constructions will be used in this paper to describe some restricted cocycles and their triviality.

The double extensions of Lie superalgebras equipped with a NIS (also called quadratic Lie superalgebra) was initiated in \cite{BB}, then followed by a series of papers refining the results, see~\cite{B} and references therein. Among the best known examples are the affine Kac--Moody algebras, which are double extensions of loop algebras. In the supersetting, the challenge is to prove that every Lie superalgebra with a NIS is a double extension of a smaller Lie superalgebra, since the Levi decomposition does not hold for Lie superalgebras. Moreover, the double extension of modular Lie superalgebras in characteristic 2 requires new features as demonstrated in \cite{BeBou1, BeBou2}. The double extension of {\it restricted} Lie superalgebras equipped with NIS was studied recently in~\cite{BBH}, and several examples, mainly Lie superalgebras with Cartan matrices, were studied based on \cite{BKLS, BKLLS}. This paper is the `symplectic' analog of the paper \cite{BBH}. Here, we construct double extensions for restricted quasi-Frobenius Lie superalgebras, and find necessary conditions for the double extension to be restricted too. The main novelty here is that the obstructions to carrying out symplectic double extensions are captured by the restricted cohomology, in contrast to the `non-restricted' case, where the obstructions are captured by the Chevalley--Eilenberg cohomology, see Theorems \ref{MainTh}, \ref{MainThO}, \ref{MainThOE}, and \ref{MainThOO}. With only 1-cocycles or 2-cocycles to deal with, the restricted cohomology will be quite manageable.

This is how the paper is structured. Basic concepts of restricted Lie superalgebras, cohomology of restricted Lie superalgebras, and quasi-Frobenius structures are discussed in Section~\ref{sec2}. To make the paper self-contained, we have recalled some concepts, even though they are well-known. Sections \ref{results} and \ref{sec4} is devoted to the construction of symplectic double extensions. The main results are given in Theorems \ref{MainTh}, \ref{MainThO}, \ref{MainThOE}, and \ref{MainThOO}. The converse of these theorems are given by Theorems \ref{Rec1}, \ref{Rec2}, \ref{Rec3} and \ref{Rec4}. In Section \ref{sec5}, we provide a few examples of symplectic double extensions, borrowed from \cite{BM, GKN}.

\section{Restricted Lie superalgebras and restricted cohomology}\label{sec2}
Hereafter, $\mathbb{K}$ is an arbitrary field of characteristic $\mathrm{char}(\mathbb{K})=p>2$. We refer to \cite{SF} for a~thorough study of restricted Lie algebras.

\subsection{Restricted Lie algebras}
Let $\mathfrak{a}$ be a~finite-dimensional modular Lie algebra over~$\mathbb{K}$. Following \cite{J, SF}, a~map $[p]\colon \mathfrak{a}\rightarrow \mathfrak{a}$, $a\mapsto a^{[p]}$ is called a~\textit{$p$-structure} of $\mathfrak{a}$ and $\mathfrak{a}$ is said to be {\it restricted} if
\begin{gather*}
\mathrm{ad}_{a^{[p]}}=(\mathrm{ad}_a)^p \qquad \text{for all} \ a \in \mathfrak{a},\\
(\alpha a)^{[p]}=\alpha^p a^{[p]}\qquad \text{for all}\ a\in \mathfrak{a} \ \text{and} \ \alpha \in \mathbb{K},\\
(a+b)^{[p]}=a^{[p]}+b^{[p]}+\sum_{1\leq i\leq p-1}s_i(a,b),
\end{gather*}
where the $s_i(a,b)$ can be obtained from
\begin{gather*}
(\mathrm{ad}_{\lambda a+b})^{p-1}(a)= \sum_{1\leq i \leq p-1} is_i(a,b) \lambda^{i-1}.
\end{gather*}
To investigate $p$-structures on a Lie algebra, it is useful to use the following theorem, due to Jacobson.
\begin{Theorem}[\cite{J}] \label{Jac}
Let $(e_j)_{j\in J}$ be a~basis of $\mathfrak{a}$ such that there are $f_j\!\!\in \!\mathfrak{a}$ satisfying ${(\mathrm{ad}_{e_j})^p\!=\!\mathrm{ad}_{f_j}}$. Then, there exists exactly one $[p]$-map $[p]\colon\mathfrak{a}\rightarrow \mathfrak{a}$ such that
\[
e_j^{[p]}=f_j \qquad \text{for all}\ j\in J.
\]
\end{Theorem}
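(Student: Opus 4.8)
The plan is to prove Jacobson's theorem on the existence and uniqueness of a $[p]$-structure determined by prescribing its values on a basis.

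\medskip

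The key structural fact I would rely on is the identity governing how the $p$-operation interacts with addition and scalar multiplication, namely
\[
(\alpha a + \beta b)^{[p]} = \alpha^p a^{[p]} + \beta^p b^{[p]} + \sum_{1 \le i \le p-1} s_i(\alpha a, \beta b),
\]
where $s_i(a,b)$ is a universal Lie polynomial in $a,b$ (obtained from the expansion of $(\operatorname{ad}_{\lambda a + b})^{p-1}(a)$ as displayed above). The crucial observation — which goes back to Jacobson and is usually stated as a lemma on ``$p$-semilinear'' maps — is that these $s_i$ terms live in the derived subalgebra; more precisely, once one sets $g(a) := a^{[p]}$ for a candidate structure, any two candidate $[p]$-maps differ by a map $\mathfrak{a} \to \mathfrak{a}$ that is $p$-semilinear and additive, and the correction terms $s_i$ are built out of iterated brackets, hence are forced. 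Concretely, I would proceed as follows.

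\medskip

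First, for \emph{uniqueness}: suppose $[p]_1$ and $[p]_2$ are two restricted $p$-structures with $e_j^{[p]_1} = e_j^{[p]_2} = f_j$ for all $j \in J$. Writing an arbitrary element as $a = \sum_{j} \alpha_j e_j$ (a finite sum), I would expand $a^{[p]_1}$ and $a^{[p]_2}$ using the additivity formula iteratively. Each application of the formula introduces, besides the scaled $p$-th powers $\alpha_j^p e_j^{[p]}$, only the polynomials $s_i$ evaluated on sums of the $\alpha_j e_j$; but each $s_i(x,y)$ is a fixed Lie polynomial in $x$ and $y$ with no reference to the $[p]$-map at all. Hence $a^{[p]_1} - a^{[p]_2} = \sum_j \alpha_j^p (e_j^{[p]_1} - e_j^{[p]_2}) = 0$, so the two structures agree. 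The bookkeeping of the nested sums here is the fussy part, but it is routine once one knows $s_i$ does not involve $[p]$.

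\medskip

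Next, for \emph{existence}: I would \emph{define} $g\colon \mathfrak{a} \to \mathfrak{a}$ by the explicit formula suggested by the uniqueness argument — fix the basis $(e_j)$, set $g\big(\sum_j \alpha_j e_j\big) := \sum_j \alpha_j^p f_j + (\text{the accumulated }s_i\text{-terms})$ — and then verify that $g$ is a genuine $p$-structure, i.e.\ that it satisfies the three defining axioms. The scalar-homogeneity axiom $g(\alpha a) = \alpha^p g(a)$ follows from the homogeneity properties of the $s_i$ (each $s_i(\lambda a, \lambda b) = \lambda^p s_i(a,b)$, visible from the defining generating identity). The additivity axiom is built into the definition, modulo checking consistency/associativity of the nested expansion. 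The main axiom to check is $\operatorname{ad}_{g(a)} = (\operatorname{ad}_a)^p$: here I would use that $\operatorname{ad}$ is a Lie algebra homomorphism into $\mathfrak{gl}(\mathfrak{a})$, that $\mathfrak{gl}(\mathfrak{a})$ (being an associative algebra under composition) carries its own canonical restricted structure $D \mapsto D^p$ which \emph{does} satisfy all three axioms, and that by hypothesis $(\operatorname{ad}_{e_j})^p = \operatorname{ad}_{f_j}$ on generators. Since both $a \mapsto \operatorname{ad}_{g(a)}$ and $a \mapsto (\operatorname{ad}_a)^p$ are maps agreeing on the basis and obeying the same semilinearity/additivity-with-$s_i$-correction rules (the latter because the $s_i$ are Lie polynomials preserved by the homomorphism $\operatorname{ad}$), they coincide on all of $\mathfrak{a}$.

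\medskip

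The step I expect to be the real obstacle is establishing cleanly that the correction terms $s_i$ are \emph{universal} Lie polynomials independent of the $[p]$-map, and that the ``define on a basis and extend by the formula'' prescription is well-posed (independent of the order in which one processes the basis elements, and genuinely additive). This is exactly the content of the classical lemma on $p$-semilinear maps and on the Jacobson/Zassenhaus formula $(\sum_j \lambda_j x_j)^{[p]} = \sum_j \lambda_j^p x_j^{[p]} + \sum_i \lambda^{\mathbf{?}} w_i(\ldots)$; rather than re-deriving it, I would cite \cite{J, SF} for the combinatorial identities and focus the argument on the verification of the three axioms for the map $g$ so constructed. Everything else is a matter of careful but unenlightening expansion.
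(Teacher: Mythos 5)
First, a point of reference: the paper does not prove this statement at all — Theorem \ref{Jac} is quoted from Jacobson \cite{J} (see also \cite{SF}) — so there is no in-paper argument to compare yours against; the comparison below is with the classical proof.

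Your uniqueness half is correct and standard: the $s_i(a,b)$ are Lie polynomials determined by the bracket alone, so two $p$-maps agreeing on a basis agree everywhere (equivalently, their difference is a $p$-semilinear map into the centre vanishing on a basis). The existence half, however, defers exactly the step that carries the weight. Defining $g\bigl(\sum_j \alpha_j e_j\bigr) := \sum_j \alpha_j^p f_j + (\text{accumulated } s_i\text{-terms})$ requires (a) that the nested expansion is independent of the order in which basis elements are split off, and (b) that the resulting $g$ satisfies $g(a+b)=g(a)+g(b)+\sum_i s_i(a,b)$ for \emph{arbitrary} $a,b$, not merely along the chosen basis decomposition. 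You acknowledge this and propose to cite \cite{J,SF} for it, but that is in effect citing the theorem being proved: this coherence is the substance of Jacobson's result. Note also that your verification of $\mathrm{ad}_{g(a)}=(\mathrm{ad}_a)^p$ via the canonical $p$-th power on $\mathfrak{gl}(\mathfrak{a})$, while correct, cannot repair (b): it only shows that the additivity defect of $g$ is killed by $\mathrm{ad}$, i.e.\ is central, not that it vanishes. The classical way to close the gap is to pass to the universal enveloping algebra $U(\mathfrak{a})$: the hypothesis $(\mathrm{ad}_{e_j})^p=\mathrm{ad}_{f_j}$ says precisely that $z_j:=e_j^p-f_j$ is central in $U(\mathfrak{a})$, and for $a=\sum_j\alpha_j e_j$ one sets $a^{[p]}:=a^p-\sum_j\alpha_j^p z_j$, which lies in $\mathfrak{a}$ by Jacobson's formula in the associative algebra $U(\mathfrak{a})$. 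All three axioms then follow at once, because $u\mapsto u^p$ satisfies them in $U(\mathfrak{a})$ while the defect $\sum_j\alpha_j^p z_j$ is central and additive and $p$-homogeneous in the coordinates; well-posedness is automatic since the formula is a single closed expression. Your outline is therefore the right shape, but as written its crux is assumed rather than proved, and the enveloping-algebra device (or an equivalent argument) is what is missing.
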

In the sequel, we will need the following lemma.
\begin{Lemma}\label{1lemma}
The following equality holds
\begin{align*}\label{si}
\sum_{i=1}^{p-1}s_i(a,b)&=\sum_{\underset{x_{p-1}=b,~x_p=a}{x_k=a \text{ or } b}}\frac{1}{\pi(a)}[x_1,[x_2,[\dots ,[x_{p-1},x_p]\dots ]]],
\end{align*}
where $\pi\{a\}$ stands for the number of terms $x_k$ equal to $a$.
\end{Lemma}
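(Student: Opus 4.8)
The plan is to expand the defining generating identity
\[
(\mathrm{ad}_{\lambda a+b})^{p-1}(a)=\sum_{1\leq i\leq p-1}is_i(a,b)\lambda^{i-1}
\]
directly, using that $\mathrm{ad}$ is linear, so $\mathrm{ad}_{\lambda a+b}=\lambda\,\mathrm{ad}_a+\mathrm{ad}_b$, and then reading off coefficients of powers of $\lambda$. First I would distribute the $(p-1)$-fold product $(\lambda\,\mathrm{ad}_a+\mathrm{ad}_b)^{p-1}$ over the two summands, so that it becomes a sum, over all words $(x_1,\dots,x_{p-1})$ with each $x_k\in\{a,b\}$, of the operators $\lambda^{\#\{k:\,x_k=a\}}\,\mathrm{ad}_{x_1}\mathrm{ad}_{x_2}\cdots\mathrm{ad}_{x_{p-1}}$. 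Applying this to $a$ and setting $x_p:=a$ gives
\[
\mathrm{ad}_{x_1}\cdots\mathrm{ad}_{x_{p-1}}(a)=[x_1,[x_2,[\dots,[x_{p-1},x_p]\dots]]],
\]
so that each word contributes the nested bracket above with a factor $\lambda^{\pi(a)-1}$, where $\pi(a)$ now counts the occurrences of $a$ among $x_1,\dots,x_{p-1},x_p$ (the extra $-1$ accounting for the forced $x_p=a$).

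Next I would match coefficients of $\lambda^{i-1}$ for each $i\in\{1,\dots,p-1\}$: the words contributing to $\lambda^{i-1}$ are exactly those with $\pi(a)=i$, hence
\[
i\,s_i(a,b)=\sum_{\substack{x_k=a\text{ or }b,\ x_p=a\\ \pi(a)=i}}[x_1,[x_2,[\dots,[x_{p-1},x_p]\dots]]].
\]
Since $1\leq i\leq p-1<p$, the scalar $i$ is invertible in $\mathbb{K}$, so dividing by $i=\pi(a)$ is legitimate and yields $s_i(a,b)=\sum \frac{1}{\pi(a)}[x_1,[\dots,[x_{p-1},x_p]\dots]]$ over the same index set. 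Summing over $i=1,\dots,p-1$ collapses the constraint $\pi(a)=i$ to simply $1\le \pi(a)\le p-1$, which is automatic once $x_p=a$ and $x_{p-1}$ is allowed to be either letter.

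Finally I would remove the spurious terms: whenever $x_{p-1}=a$ the innermost bracket is $[x_{p-1},x_p]=[a,a]=0$, so those words contribute nothing and may be dropped, which is precisely the restriction $x_{p-1}=b$ appearing in the statement. This gives the asserted formula. I do not expect a genuine obstacle here; the only point requiring care is the bookkeeping that identifies the exponent of $\lambda$ with $\pi(a)-1$ (equivalently, remembering to include the forced final entry $x_p=a$ in the count $\pi(a)$), together with the remark that $i$ is a unit in $\mathbb{K}$ so that the division by $\pi(a)$ makes sense.
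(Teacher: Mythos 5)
Your proposal is correct and follows essentially the same route as the paper's proof: expand $(\lambda\,\mathrm{ad}_a+\mathrm{ad}_b)^{p-1}(a)$ over words in $a,b$, identify the exponent of $\lambda$ with $\pi\{a\}-1$ (counting the forced $x_p=a$), divide by the unit $i=\pi\{a\}$, and sum over $i$; the only cosmetic difference is that the paper imposes $x_{p-1}=b$ from the outset by using $[\lambda a+b,a]=[b,a]$, whereas you discard the $x_{p-1}=a$ words at the end via $[a,a]=0$. The one small remark to delete is the claim that $1\le\pi\{a\}\le p-1$ is ``automatic'': the all-$a$ word has $\pi\{a\}=p$ (so $1/\pi\{a\}$ would not even make sense in characteristic $p$), but it never enters because the sum over $i$ stops at $p-1$ — equivalently, pass directly from the constraint $1\le\pi\{a\}\le p-1$ to the words with $x_{p-1}=b$, exactly as your final step does.
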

\begin{proof}
 One can check, by expanding the expression $(\mathrm{ad}_{\lambda a+b})^{p-1}(a)$ and noticing that $[\lambda a+b,a]=[b,a]$, that the coefficient of $\lambda^{i-1}$ is given by
 \begin{equation}
 \label{sibracket}
 \sum_{\underset{\sharp\{k,~x_k=a\}=i-1}{x_k=a \text{ or } b}}[x_1,[x_2,[\dots ,[x_{p-2},[b,a]]\dots ]]].
 \end{equation}
The expression (\ref{sibracket}) contains $i$ variables ``$a$'', so $i=\mathrm{card}\{k,\,x_k=a\}+1. $
It follows that
\begin{align*}
 s_i(a,b)&=\sum_{\underset{\text{card}\{k,~x_k=a\}=i-1}{x_k=a \text{ or } b}}\frac{1}{i}[x_1,[x_2,[\dots ,[x_{p-2},[b,a]]\dots ]]]\\[-1mm]
 &=\sum_{\underset{\text{card}\{k,~x_k=a\}=i-1}{x_k=a \text{ or } b}}\frac{1}{\pi\{a\}}[x_1,[x_2,[\dots ,[x_{p-2},[b,a]]\dots ]]].
\end{align*}
Therefore,
\begin{align*}
 \sum_{i=1}^{p-1}s_i(a,b)&=\sum_{i=1}^{p-1}\sum_{\underset{\text{card}\{k,~x_k=a\}=i}{x_k=a \text{ or } b}}\frac{1}{\pi\{a\}}[x_1,[x_2,[\dots ,[x_{p-2},[b,a]]\dots ]]]\\[-1mm]
 &=\sum_{\underset{x_{p-1}=b,~x_p=a}{x_k=a \text{ or } b}}\frac{1}{\pi\{a\}}[x_1,[x_2,[\dots ,[x_{p-1},x_p]\dots ]]]. \tag*{\qed}
\end{align*}
\renewcommand{\qed}{}
\end{proof}
Let $\mathfrak{a}$ be a restricted Lie algebra. An $\mathfrak{a}$-module $M$ is called \textit{restricted} if
\[
\underbrace{a\cdots a}_{p\text{~~times}}\cdot m =a^{[p]}\cdot m \qquad \text{for all}\ a\in \mathfrak{a} \ \text{and any} \ m\in M.
\]
\subsection{Restricted Lie superalgebras}
Let $\mathfrak{a}=\mathfrak{a}_{\bar 0}\oplus \mathfrak{a}_{\bar 1} $ be a~finite-dimensional Lie superalgebra defined over a field of characteristic~$p>2$. Since $[a,[a,a]]=0$, for all $a\in \mathfrak{a}_{\bar 1}$, does not follow from the Jacobi identity for $p=3$, we require it as part of our definition. In \cite{BBH}, we presented two examples of superalgebras for which~$[a,[a,a]]\not=0$, where $a\in \mathfrak{a}_{\bar 1}$, constructed by means of double extensions, and we called them pre-Lie superalgebras. We denote the parity of a given non-zero homogeneous element~$a\in\mathfrak{a}$ by~$|a|$.\looseness=-1

Following \cite{P}, we say that $\mathfrak{a}$ has a~\textit{$p|2p$-structure} if $\mathfrak{a}_{\bar 0}$ is a restricted Lie algebra and
\[
\mathrm{ad}_{a^{[p]}}(b)=(\mathrm{ad}_a)^p(b)\qquad \text{for all}\ a \in \mathfrak{a}_{\bar 0}\ \text{and}\ b\in \mathfrak{a}.
\]

Recall that the bracket of two odd elements of the Lie superalgebra is polarization of squaring~$a\mapsto a^2$ \big(namely, $[a,b]=(a+b)^2-a^2-b^2$\big). We set{\samepage
\[
[2p]\colon\ \mathfrak{a}_{\bar 1} \rightarrow \mathfrak{a}_{\bar 0}, \qquad a\mapsto \bigl(a^2\bigr)^{[p]}\qquad \text{for any} \ a\in \mathfrak{a}_{\bar 1}.
\]
The pair $(\mathfrak{a}, [p|2p])$ is referred to as a~\textit{restricted} Lie superalgebra.}

For $p=2$, there are several notions of restrictedness, see \cite{BLLS}. We do not consider them in this paper.

The following theorem is a~straightforward superization of Jacobson's Theorem~\ref{Jac}.
\begin{Theorem}
Let $(e_j)_{j\in J}$ be a~basis of $\mathfrak{a}_{\bar 0}$, let the elements $f_j\in \mathfrak{a}_{\bar 0}$ be such that ${(\mathrm{ad}_{e_j})^p=\mathrm{ad}_{f_j}}$. Then, there exists exactly one $[p|2p]$-mapping $[p|2p]\colon\mathfrak{a}\rightarrow \mathfrak{a}$ such that
\[
e_j^{[p]}=f_j \qquad \text{for all} \ j\in J.
\]
\end{Theorem}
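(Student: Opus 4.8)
The plan is to mimic the proof of Jacobson's Theorem~\ref{Jac} and then check that the odd part takes care of itself. First I would observe that the restricted structure on a Lie superalgebra consists, by definition, of a $p$-map on $\mathfrak{a}_{\bar 0}$ (satisfying the usual restricted Lie algebra axioms together with $\mathrm{ad}_{a^{[p]}}(b) = (\mathrm{ad}_a)^p(b)$ for all $b \in \mathfrak{a}$, not merely $b \in \mathfrak{a}_{\bar 0}$), plus the induced map $[2p]\colon \mathfrak{a}_{\bar 1} \to \mathfrak{a}_{\bar 0}$, $a \mapsto (a^2)^{[p]}$. Since the latter is completely determined by the $p$-map on $\mathfrak{a}_{\bar 0}$ (via $a \mapsto a^2 \in \mathfrak{a}_{\bar 0}$), the whole $[p|2p]$-structure is determined by the $p$-map on $\mathfrak{a}_{\bar 0}$ alone, and the existence/uniqueness question reduces to a question purely about $\mathfrak{a}_{\bar 0}$.

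Next I would apply Jacobson's Theorem~\ref{Jac} to the restricted Lie algebra $\mathfrak{a}_{\bar 0}$ with the given basis $(e_j)_{j \in J}$ and the chosen elements $f_j$: since $(\mathrm{ad}_{e_j})^p = \mathrm{ad}_{f_j}$ as operators on $\mathfrak{a}_{\bar 0}$, Jacobson's theorem produces a unique $p$-map $[p]\colon \mathfrak{a}_{\bar 0} \to \mathfrak{a}_{\bar 0}$ with $e_j^{[p]} = f_j$. This already gives the even part of the $[p|2p]$-mapping and its uniqueness there. I would then define the odd part by the forced formula $[2p](a) = (a^2)^{[p]}$ for $a \in \mathfrak{a}_{\bar 1}$; this is the only possible choice, so uniqueness of the full mapping follows from uniqueness on $\mathfrak{a}_{\bar 0}$.

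It remains to verify the one compatibility condition that is not automatic from the characteristic-zero-style bookkeeping, namely that $\mathrm{ad}_{a^{[p]}}(b) = (\mathrm{ad}_a)^p(b)$ holds for \emph{all} $b \in \mathfrak{a}$, including odd $b$, and not just for $b$ in the spanning space used to invoke Jacobson's result. Here I would argue as follows: both sides are additive in $a$ up to the correction terms $s_i(a,b)$, which are iterated brackets (Lemma~\ref{1lemma}) and hence respect parity, so it suffices to check the identity on basis elements $a = e_j$; for those, $\mathrm{ad}_{e_j^{[p]}} = \mathrm{ad}_{f_j} = (\mathrm{ad}_{e_j})^p$ already as endomorphisms of the whole superalgebra $\mathfrak{a}$, by the hypothesis, and the identity extends to general $a \in \mathfrak{a}_{\bar 0}$ exactly as in the classical proof. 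I expect the main (and essentially only) obstacle to be this point: making sure that Jacobson's construction, which is carried out internally to $\mathfrak{a}_{\bar 0}$, still yields operators agreeing with $(\mathrm{ad}_a)^p$ when tested against odd elements — which is precisely why the hypothesis is stated with $(\mathrm{ad}_{e_j})^p = \mathrm{ad}_{f_j}$ as operators on all of $\mathfrak{a}$. Everything else is a direct transcription of the proof of Theorem~\ref{Jac}.
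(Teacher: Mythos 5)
Your proposal is correct, and it is essentially the argument the paper intends: the paper gives no proof of this statement, calling it a straightforward superization of Theorem~\ref{Jac}, and your route — apply the classical Jacobson theorem to $\mathfrak{a}_{\bar 0}$, define $[2p]$ by the forced formula $a\mapsto\bigl(a^2\bigr)^{[p]}$, and verify $\mathrm{ad}_{a^{[p]}}=(\mathrm{ad}_a)^p$ on all of $\mathfrak{a}$ by noting that the difference $a\mapsto(\mathrm{ad}_a)^p-\mathrm{ad}_{a^{[p]}}$ is $p$-semilinear in $\mathrm{End}(\mathfrak{a})$ and vanishes on the basis by hypothesis — is exactly that superization. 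The reduction of uniqueness to the even part (since the $[2p]$-map is determined by $[p]$) is also handled correctly, so there are no gaps.
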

Additionally, Lemma \ref{1lemma} is also valid for restricted Lie superalgebras.

A homogeneous ideal $I=I_{\bar 0}\oplus I_{\bar 1}$ of $\mathfrak{a}$ is called a~\textit{$p$-ideal} if it is closed under the $[p|2p]$-map; namely
\[
a^{[p]}\in I_{\bar 0}\qquad \text{for all}\ a\in I_{\bar 0}.
\]
As $a^{[2p]}=\big(a^2\big)^{[p]}$, for all $a\in \mathfrak{a}_{\bar 1}$, this would imply that
\[
a^{[2p]}\in I_{\bar 0}\qquad \text{for all}\ a\in \mathfrak{a}_{\bar 1}.
\]
Let $\mathfrak{a}$ be a restricted Lie superalgebra. An $\mathfrak{a}$-module $M$ is called \textit{restricted} if
\[
\underbrace{a\cdots a}_{p\text{~~times}} \cdot m =a^{[p]}\cdot m \qquad \text{for all}\ a\in \mathfrak{a}_{\bar 0} \ \text{and}\ m\in M.
\]
Consequently,
\[
\underbrace{a\cdots a}_{2p\text{~~times}} \cdot m =a^{[2p]}\cdot m \qquad \text{for all}\ a\in \mathfrak{a}_{\bar 1} \ \text{and}\ m\in M.
\]
\subsection{Restricted cohomology for restricted Lie superalgebras}
Hochschild introduced restricted cohomology \cite{H}. By defining the restricted cohomology groups of restricted Lie algebras with the restricted universal enveloping algebra introduced by Jacobson \cite{J}, Hochschild was able to establish a connection to the ordinary Chevalley--Eilenberg cohomology of Lie algebras with a six-term exact sequence \cite{H}. Contributions were also made by May \cite{M}. Hochschild's definition is not practical for computation, however.
Fuchs and Evans addressed this issue by providing a new free resolution of the ground field in terms of modules over the restricted enveloping algebra. Cohomology groups can be built up to order $p$ if the restricted Lie algebra is abelian using this approach, which only allows them to describe the first and second restricted cohomology groups in the non-abelian case due to the Lie bracket and non-linearity of the $[p]$-map \cite{EF}. This cohomology was computed explicitly and further examined by Evans and Fialowski, see \cite{EF4, EF5} and references therein. The superization of the restricted cohomology of Lie algebras was given in \cite{YCC}. In this section, we review the basic concepts that are only required for our construction.\looseness=1

Let $\mathfrak{a}$ be a restricted Lie superalgebra, and let $M$ be a restricted $\mathfrak{a}$-module. Let us review the Chevalley--Eilenberg cohomology for Lie superalgebras, see \cite{BGL} for more details.

Let us define the space of cochains $C^n_{\text{CE}}(\mathfrak{a}; M)$ in the Chevalley--Eilenberg cohomology. For ${n=0}$, we put $C^0_{\text{CE}}(\mathfrak{a}; M):=M$. For $n>0$, the space of cochains $C^n_{\text{CE}}(\mathfrak{a}; M)$ is the space of $n$-linear super anti-symmetric maps. The differential is given as follows:
\begin{gather*}
 d_{\mathrm{CE}}^0(m)(x)=(-1)^{|m||x|}x\cdot m\qquad\text{for any}\ m\in M\ \text{and} \ x\in\mathfrak{a},\\
d_{\mathrm{CE}}^n(\varphi)(x_1,\dots, x_n) = \mathop{\sum}\limits_{i<j}(-1)^{(|x_j|(|x_{i+1}|+\dots+|x_{j-1}|)+j}\\
\phantom{d_{\mathrm{CE}}^n(\varphi)(x_1,\dots, x_n) =}{}\times\varphi(x_1, \dots, x_{i-1}, [x_i,x_j], x_{i+1} \dots, \widetilde x_j, \dots, x_n)\\
\phantom{d_{\mathrm{CE}}^n(\varphi)(x_1,\dots, x_n) =\times}{}+\mathop{\sum}\limits_{j}(-1)^{(|x_j|)(|\varphi|+|x_{1}|+\dots+|x_{j-1}|)+j}x_j  \varphi (x_1, \dots, \widetilde x_j, \dots, x_n)
\end{gather*}
for any $\varphi \in C^{n-1}_{\text{CE}}(\mathfrak{a}; M)$ with $n>0$, and $x_1,\dots, x_n\in\mathfrak{a}$.

Let $\varphi\in C^2_{\text{CE}}(\mathfrak{a};M)$ and $\theta\colon \mathfrak{a}_0\rightarrow M$ be two maps.
	We say that $\theta$ \textit{has the $*$-property with respect to} $\varphi$ if
\begin{gather*}
		\theta(\delta a)=\delta^p\theta(a),\qquad \text{for all}\ \delta\in \mathbb K \ \text{and}\ a\in \mathfrak{a}_{\bar 0},\\
		\theta(a+b)=\theta(a)+\theta(b)\\
\phantom{\theta(a+b)=}{}+ \sum_{\underset{x_1=a,~x_2=b}{x_i=a \text{ or } b}} \frac{1}{\pi\{a\}}\sum_{k=0}^{p-2}(-1)^kx_p\dots x_{p-k+1}\varphi([[\dots [x_1,x_2],x_3]\dots ,x_{p-k-1}],x_{p-k}),
\end{gather*}
with $a,b\in \mathfrak{a}_{\bar 0}$, and $\pi\{a\}$ stands for the number of factors $x_i$ that are equal to $a$.

We are ready now to define the space of restricted cochains as
\begin{gather*}
C^1_*(\mathfrak{a};M) := C^1_{\text{CE}}(\mathfrak{a};M),\\
C^2_*(\mathfrak{a};M) := \big\lbrace (\varphi,\theta),\, \varphi\in C^2_{\text{CE}}(\mathfrak{a};M),\,\theta\colon \mathfrak{a}_{{\bar 0}}\rightarrow M \text{ has the $*$-property w.r.t.} \ \varphi \big\rbrace.
\end{gather*}

An element $\varphi \in C^1(\mathfrak{a}; M)$ induces a map
\begin{align*}
\mathrm{Ind}^1(\varphi)\colon\ \mathfrak{a} &\longrightarrow M,\\
x&\longmapsto \varphi\bigl(x^{[p]}\bigr)-x^{p-1}\varphi(x).
\end{align*}

An element $(\alpha;\beta)\in C_*^2(\mathfrak{a},M)$ induces a map
\begin{align*}
	\mathrm{Ind}^2(\alpha,\beta)\colon\ \mathfrak{a}_0 \times \mathfrak{a}_0 & \longrightarrow M, \\
	(a,b)&\longmapsto \alpha\bigl( a,b^{[p]}\bigr) - \sum_{i+j=p-1}(-1)^ib^i \alpha\bigl( [[\dots [a,\overset{j\text{ terms}}{\overbrace{b],\dots ],b}]},b\bigr)+a\beta(b).
	\end{align*}

A restricted $2$-cocycle is an element $(\alpha;\beta)\in C_*^2(\mathfrak{a};M)$ such that
\[
\bigl(d_{\rm CE}^2\alpha,\mathrm{Ind}^2(\alpha,\beta)\bigr)=(0,0).
\]
We denote by $Z_*^2(\mathfrak{a};M)$ the space of restricted 2-cocycles.

An even restricted $2$-coboundary is an element $(\alpha; \beta)\in C_*^2(\mathfrak{a};M)$ such that there exists $\varphi\in C_*^1(\mathfrak{a}; M)=C_{\text{CE}}^1(\mathfrak{a}; M)$ for which
\begin{gather*}
\alpha(a,b) = \varphi([a,b])-a\varphi(b)+b\varphi(a) \qquad \text{for all}\ a,b\in \mathfrak{a}; \\
\beta(a) = \varphi\bigl(a^{[p]}\bigr)-a^{p-1}\varphi(a) \qquad \text{for all} \ a\in \mathfrak{a}_{\bar 0}.
\end{gather*}
We denote by $B_*^2(\mathfrak{a};M)$ the space of restricted 2-coboundaries.

\subsection{Quasi-Frobenius Lie superalgebras}\label{qFrobenius}

As far as we know, the notion of quasi-Frobenius Lie algebra is due to Seligman. Those Lie algebras have been introduced to answer a question raised by Jacobson: If $\mathfrak{a}$ is finite-dimensional,
what conditions one can put on $\mathfrak{a}$ in order the enveloping algebra $U(\mathfrak{a})$ admits an
exact simple module? The answer to this question was settled by Ooms in \cite{O}.

Several authors refer to quasi-Frobenius Lie algebras as symplectic Lie algebras, see \cite{AM, BC,BFLM,DM2,DM1, D, Fi, GR, MR2}.

The superization of this notion is immediate. We have preferred to retain the term ‘quasi-Frobenius’ in the super setting.

Following \cite{BaBe, BM}, a Lie superalgebra $\mathfrak{a}$ is called {\it quasi-Frobenius} if it is equipped with a 2-cocycle $\omega \in Z^2_ {\text{CE}}(\mathfrak{a}; \mathbb{K})$ such that $\omega$ is a non-degenerate bilinear form. Explicitly, for all $a,b\in \mathfrak{a}$ we have
\begin{gather*}
(-1)^{|a||c|} \omega(a, [b , c]) +(-1)^{|c||b|} \omega(c, [a , b] )+(-1)^{|b||a|} \omega(b, [c , a] )=0.
\end{gather*}

We denote such an algebra by $(\mathfrak{a}, \omega)$. For a list of quasi-Frobenius Lie superalgebras, see \cite{BM}.

In the case where $\omega \in B^2(\mathfrak{a},\mathbb K)$, the Lie superalgebra $\mathfrak{a}$ is called a {\it Frobenius} Lie superalgebra.

Recall that a bilinear form $\omega$ is called {\it even} (resp, {\it odd}) if $\omega(\mathfrak{a}_{\bar 0}, \mathfrak{a}_{\bar 1})= \omega(\mathfrak{a}_{\bar 1},\mathfrak{a}_{\bar 0})=0$ (resp.\ if $\omega(\mathfrak{a}_{\bar 0}, \mathfrak{a}_{\bar 0})= \omega(\mathfrak{a}_{\bar 1},\mathfrak{a}_{\bar 1})=0$).

Recall that an even (resp.\ odd) non-degenerate bilinear form is called {\it orthosymplectic} (resp.\ {\it periplectic}). This leads us to the following definition:

A quasi-Frobenius Lie superalgebra $(\mathfrak{a}, \omega )$ is called {\it orthosymplectic quasi-Frobenius} (resp.\ {\it periplectic quasi-Frobenius}) if the form $\omega$ is even (resp.\ odd) on $\mathfrak{a}$.

Let $(\mathfrak{a}, \omega)$ be a quasi-Frobenius Lie superalgebra and let $S \subseteq \mathfrak{a}$ be a subspace. The orthogonal of $S$ in $(\mathfrak{a}, \omega)$ is
\[
S^\perp = \{v \in \mathfrak{a}\mid \omega(v,w) = 0\ \text{for all} \ w \in S\}.
\]
The subspace $S$ is called {\it non-degenerate} if $S \cap S^\perp = \{0\}$. It is called
{\it isotropic} if ${S \subset S^\perp}$. A~maximal isotropic subspace is called {\it Lagrangian} if it satisfies $S = S^\perp$.

If $I$ is an ideal of $\mathfrak{a}$, then $I^\perp$ is an ideal if and only if $I^\perp \subseteq Z_\mathfrak{a}(I)$, where $Z_\mathfrak{a}(I)$ stands for
the centralizer of $I$ in $\mathfrak{a}$ (see \cite[Lemma~2.4]{BM}).

\subsection{The adjoint of a derivation}
Consider a linear map ${\mathscr D}\in \mathrm{End}(\mathfrak{a})=\mathrm{End}(\mathfrak{a})_{\bar 0}\oplus \mathrm{End}(\mathfrak{a})_{\bar 1} $. The map ${\mathscr D}$ is called a derivation of~$\mathfrak{a}$ if it satisfies the following condition:
\[
{\mathscr D}([a,b])=[ {\mathscr D}(a),b] +(-1)^{|{\mathscr D}||a|} [a, {\mathscr D} (b)]\qquad \text{for all}\ a, b\in \mathfrak{a}_i,\ \text{where}\ i=\bar 0, \bar 1.
\]
Let us denote by $\mathfrak{der}(\mathfrak{a})$ the space of all derivations on $\mathfrak{a}$.

To each derivation ${\mathscr D}\in \mathfrak{der}(\mathfrak{a})$ one can assign a unique linear map ${\mathscr D}^*$ with respect to the non-degenerate form $\omega$, called the {\it adjoint} of ${\mathscr D}$, that satisfies the following condition:
\[
\omega({\mathscr D}(a),b) = (-1)^{|a| |{\mathscr D}|} \omega(a, {\mathscr D}^*(b)).
\]
By construction of ${\mathscr D}^*$, it is easy to see that ${\mathscr D}^*$ is a derivation as well.

Let us assume now that $\mathfrak{a}$ is restricted. A derivation $\mathscr{D}\in \mathfrak{der}(\mathfrak{a})$ is called \textit{restricted} if
\[
\mathscr{D}\bigl(a^{[p]}\bigr)=(\mathrm{ad}_a)^{p-1} (\mathscr{D}(a)) \qquad \text{for all} \ a\in \mathfrak{a}_{\bar 0}.
\]
Consequently, we have
\[
\mathscr{D}\bigl(a^{[2p]}\bigr)=(\mathrm{ad}_{a^2})^{p-1} \bigl(\mathscr{D}\big(a^2\big)\bigr) \qquad \text{for all} \ a\in \mathfrak{a}_{\bar 1}.
\]
The space of restricted derivations is denoted by $\mathfrak{der}^p(\mathfrak{a})$. Outer restricted derivations of nilpotent restricted Lie algebras has been investigated in \cite{FSW}.

Following \cite{BBH}, a derivation $\mathscr{D}\in \mathfrak{der}^p_{\bar 0}(\mathfrak{a})$ has \textit{$p$-property} if there exist $\gamma \in \mathbb{K}$ and $a_0\in \mathfrak{a}_{\bar 0}$ such that
\begin{equation}
\label{SConp}
\mathscr{D}^p=\gamma \mathscr{D}+\mathrm{ad}_{a_0},\qquad
\mathscr{D}(a_0)=0.
\end{equation}
For a list of outer derivations satisfying the $p$-property, see \cite{BBH}.
\subsection{A few useful cocycles} Let $(\mathfrak{a}, \omega)$ be a restricted quasi-Frobenius Lie superalgebra, and let ${\mathscr D}$ be a derivation on $\mathfrak{a}$.
\begin{Lemma}\label{Ccocycle}
Let us define the map
\begin{gather} \label{CEcocycle}
C\colon\ \mathfrak{a}\wedge \mathfrak{a} \rightarrow \mathbb K,\qquad (a,b)\mapsto \omega\bigl(\bigl({\mathscr D}+{\mathscr D}^*\bigr)(a),b\bigr)=\omega({\mathscr D}(a), b) +(-1)^{|a||\mathscr{D}|}\omega(a,{\mathscr D}(b)).
\end{gather}
Then $C\in Z^2_{\mathrm{CE}}(\mathfrak{a}; \mathbb{K})$. Moreover, if $\mathscr D$ is inner, then $C\in B^2_{\mathrm{CE}}(\mathfrak{a}; \mathbb{K})$.
\end{Lemma}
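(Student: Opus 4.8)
The plan is to verify directly that $C$ satisfies the Chevalley--Eilenberg cocycle identity, using that $\omega$ itself is a 2-cocycle and that both $\mathscr{D}$ and $\mathscr{D}^*$ are derivations. First I would record the three ingredients explicitly: (i) the cocycle condition for $\omega$, namely $(-1)^{|a||c|}\omega(a,[b,c])+(-1)^{|c||b|}\omega(c,[a,b])+(-1)^{|b||a|}\omega(b,[c,a])=0$; (ii) the derivation property $\mathscr{D}([a,b])=[\mathscr{D}(a),b]+(-1)^{|\mathscr{D}||a|}[a,\mathscr{D}(b)]$, and the analogous one for $\mathscr{D}^*$; and (iii) the defining adjunction $\omega(\mathscr{D}(a),b)=(-1)^{|a||\mathscr{D}|}\omega(a,\mathscr{D}^*(b))$, from which one also gets $\omega(\mathscr{D}^*(a),b)=(-1)^{|a||\mathscr{D}|}\omega(a,\mathscr{D}(b))$. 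It is cleanest to write $C(a,b)=\omega(\mathscr{E}(a),b)$ with $\mathscr{E}=\mathscr{D}+\mathscr{D}^*$; then $\mathscr{E}$ is a derivation of the same parity as $\mathscr{D}$, and $\mathscr{E}$ is \emph{$\omega$-symmetric} in the graded sense: $\omega(\mathscr{E}(a),b)=(-1)^{|a||\mathscr{E}|}\omega(a,\mathscr{E}(b))$. This last identity is the whole point of adding the adjoint, and it immediately makes $C$ (super)antisymmetric, so $C\in C^2_{\mathrm{CE}}(\mathfrak{a};\mathbb{K})$.

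Next I would compute $d^2_{\mathrm{CE}}C$ on a triple $(a,b,c)$. Since $\mathbb{K}$ is the trivial module, the terms $x_j\cdot C(\dots)$ vanish and only the bracket terms survive, giving (up to the standard Koszul signs) a sum of the three cyclic-type terms $C([a,b],c)$, $C([b,c],a)$, $C([c,a],b)$ with appropriate signs. Expanding $C([a,b],c)=\omega(\mathscr{E}([a,b]),c)$ and using that $\mathscr{E}$ is a derivation, $\mathscr{E}([a,b])=[\mathscr{E}(a),b]+(-1)^{|\mathscr{E}||a|}[a,\mathscr{E}(b)]$, each of the three terms splits into two bracket expressions. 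I would then collect all six resulting terms and recognize them as $\omega$ evaluated on brackets: after moving $\mathscr{E}$ around via the $\omega$-symmetry (iii) when needed, the six terms organize themselves into two copies of the cocycle identity for $\omega$ — one involving the elements $\mathscr{E}(a),b,c$ and cyclic analogues — each of which vanishes by (i). So $d^2_{\mathrm{CE}}C=0$ and $C\in Z^2_{\mathrm{CE}}(\mathfrak{a};\mathbb{K})$.

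For the second assertion, suppose $\mathscr{D}=\mathrm{ad}_{x_0}$ is inner for some homogeneous $x_0\in\mathfrak{a}$. Then a short computation identifies $\mathscr{D}^*$: from $\omega(\mathrm{ad}_{x_0}(a),b)=\omega([x_0,a],b)=(-1)^{\dots}\omega(a,[x_0,b])$ using the cocycle identity for $\omega$, one finds $\mathscr{D}^*=\pm\mathrm{ad}_{x_0}$ as well (the sign depending on parities), so $\mathscr{E}=(\mathscr{D}+\mathscr{D}^*)$ is again a scalar multiple of $\mathrm{ad}_{x_0}$, hence inner, say $\mathscr{E}=\mathrm{ad}_{y_0}$. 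Then $C(a,b)=\omega([y_0,a],b)$. Defining $\varphi\in C^1_{\mathrm{CE}}(\mathfrak{a};\mathbb{K})$ by $\varphi(a):=-\omega(y_0,a)$ (or with the sign dictated by the coboundary convention $\alpha(a,b)=\varphi([a,b])-a\varphi(b)+b\varphi(a)$, where the module-action terms drop out), the cocycle identity for $\omega$ gives exactly $\varphi([a,b])=\pm\omega([y_0,a],b)=C(a,b)$, so $C=d^1_{\mathrm{CE}}\varphi\in B^2_{\mathrm{CE}}(\mathfrak{a};\mathbb{K})$.

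The main obstacle is purely bookkeeping: tracking the Koszul signs $(-1)^{|x_j|(\dots)+j}$ in $d^2_{\mathrm{CE}}$ together with the parity $|\mathscr{D}|$ appearing in the adjunction and in the derivation rule, and checking that after all substitutions the six terms really cancel in the two groups of three. There is no conceptual difficulty — everything reduces to the cocycle identity for $\omega$ — but one must be careful that $C$ has parity $|\mathscr{D}|$ (it need not be even), so the relevant "antisymmetry" and the form of $d^2_{\mathrm{CE}}$ are the graded ones; getting these signs consistent is the one place the argument can go wrong.
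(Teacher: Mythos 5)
Your reduction to the operator $\mathscr{E}=\mathscr{D}+\mathscr{D}^*$, which is a derivation and is $\omega$-symmetric, is fine, and it does give the super-antisymmetry of $C$; but the announced cancellation in the cocycle computation cannot happen as you describe it. Expanding the three terms $C([a,b],c)$, $C([b,c],a)$, $C([c,a],b)$ by the derivation property of $\mathscr{E}$ produces six terms in \emph{every} one of which $\mathscr{E}$ sits inside the bracket, e.g.\ $\omega([\mathscr{E}(a),b],c)$, while the outer slot carries a plain $a$, $b$ or $c$. A complete copy of the cocycle identity for a triple such as $(\mathscr{E}(a),b,c)$ necessarily contains the term $\omega(\mathscr{E}(a),[b,c])$ with $\mathscr{E}$ \emph{outside} the bracket, and no such term occurs among your six; so the six terms cannot be partitioned into two (or three) vanishing copies of the cocycle identity, and the $\omega$-symmetry cannot move $\mathscr{E}$ out of a bracket to create one. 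Two correct ways to finish: (a) group the six terms into three pairs according to which variable carries $\mathscr{E}$, use the $\omega$-cocycle identity on each pair to trade it for $\pm\,\omega(\mathscr{E}(x),[y,z])$, and note that the three leftovers reassemble into $\mp\, d^2_{\mathrm{CE}}C(a,b,c)$ itself, giving $2\,d^2_{\mathrm{CE}}C=0$ and hence $d^2_{\mathrm{CE}}C=0$ since $p>2$; or (b) do what the paper does: expand $C(a,[b,c])$ via the \emph{mixed} formula $\omega(\mathscr{D}(a),[b,c])+(-1)^{|a||\mathscr{D}|}\omega(a,\mathscr{D}([b,c]))$ and the derivation rule for $\mathscr{D}$ alone, which yields nine terms regrouping exactly into the three full cyclic cocycle identities for $(\mathscr{D}(a),b,c)$, $(a,\mathscr{D}(b),c)$, $(a,b,\mathscr{D}(c))$, each of which vanishes -- no factor of $2$ and no need for $\mathscr{D}^*$ to be a derivation.

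The coboundary part contains a genuine error: from $\mathscr{D}=\mathrm{ad}_{x_0}$ it does \emph{not} follow that $\mathscr{D}^*=\pm\,\mathrm{ad}_{x_0}$, because $\omega$ is only closed, not invariant; the cocycle identity gives $\omega([x_0,a],b)=\omega(x_0,[a,b])-(-1)^{|a||x_0|}\omega(a,[x_0,b])$, and the extra term $\omega(x_0,[a,b])$ obstructs the identification. A concrete counterexample is the two-dimensional algebra with $[e_1,e_2]=e_2$, $\omega=e_1^*\wedge e_2^*$ and $\mathscr{D}=\mathrm{ad}_{e_1}$: one computes $\mathscr{D}^*(e_1)=e_1$, $\mathscr{D}^*(e_2)=0$, which is an outer derivation, and $\mathscr{E}=\mathrm{id}$ is not inner, so your intermediate element $y_0$ does not exist. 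The conclusion is nevertheless true, and the paper's route is the correct repair: substitute $\mathscr{D}=\mathrm{ad}_X$ into the two-term formula $C(a,b)=\omega([X,a],b)+(-1)^{|a||X|}\omega(a,[X,b])$ and apply the cocycle identity once to the triple $(X,a,b)$ to get $C(a,b)=\omega(X,[a,b])$ directly; since $\mathbb{K}$ is a trivial module, this exhibits $C=d^1_{\mathrm{CE}}\varphi$ with $\varphi=\omega(X,\cdot)$, so $C\in B^2_{\mathrm{CE}}(\mathfrak{a};\mathbb{K})$ without ever identifying $\mathscr{D}^*$.
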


\begin{proof}
Using the fact that $\omega$ is a $2$-cocycle, we get
\begin{align*}
 \underset{a,b,c}{\circlearrowleft}(-1)^{|a||c|}C(a,[b,c])={}& \underset{a,b,c}{\circlearrowleft}(-1)^{|a||c|}\bigl(\omega({\mathscr D}(a),[b,c])+(-1)^{|{\mathscr D}||a|}\omega(a,[{\mathscr D}(b),c])\\
 &+(-1)^{|{\mathscr D}||a|+|{\mathscr D}||b|}\omega(a,[b,{\mathscr D}(c)])\bigr)\\
={}& \underset{{\mathscr D}(a),b,c}{\circlearrowleft}(-1)^{|{\mathscr D}(a)||c|}\omega({\mathscr D}(a),[b,c])+\underset{a,{\mathscr D}(b),c}{\circlearrowleft}\!(-1)^{|a||c|}\omega(a,[{\mathscr D}(b),c])\\
 &+\underset{a,b,{\mathscr D}(c)}{\circlearrowleft}(-1)^{p({\mathscr D}(c))|a|}\omega(a,[b,{\mathscr D}(c)])=0.
\end{align*}
Suppose now that $\mathscr D$ is inner; namely, $\mathscr D=\mathrm{ad}_X$, for some $X\in \mathfrak{a}$. Using the fact that $\omega$ is closed, a direct computation shows that
\[
C(a,b)=\omega(X, [a,b]). \tag*{\qed}
\]
 \renewcommand{\qed}{}
\end{proof}
The following definition is essential to us. Denote by $\sigma_i^\mathfrak{a}(a,b)$ the expression that appears in the following equation:
\begin{equation}\label{defsigma}
\omega\bigl(({\mathscr D}+{\mathscr D}^*)(\mu a+b),\bigl(\mathrm{ad}^\mathfrak{a}_{\mu a+b}\bigr)^{p-2}(a) \bigr)= \sum_{1\leq i \leq p-1}i \sigma_i^\mathfrak{a}(a,b)\mu^{i-1}.
\end{equation}
\begin{itemize}\itemsep=0pt
\item If $p=2$, then $\sigma_1^\mathfrak{a}(a,b)=\omega(({\mathscr D}+{\mathscr D}^*)(b),a).$
\item If $p=3$, then $\sigma_1^{\mathfrak{a}}(a,b)=\omega(({\mathscr D}+{\mathscr D}^*)(b),[b,a])$ and $\sigma_2^{\mathfrak{a}}(a,b)=2\omega(({\mathscr D}+{\mathscr D}^*)(a),[b,a])$.
\item If $p$ is any prime, then $\sigma_1^{\mathfrak{a}}(a,b)=\omega\bigl(({\mathscr D}+{\mathscr D}^*)(b),(\mathrm{ad}^{\mathfrak{a}}_b)^{p-2}(a)\bigr)$.
\end{itemize}
We will need the following lemmas.

\begin{Lemma}
\qquad
\begin{itemize}\itemsep=0pt
\item[$(i)$] For all $a,b\in \mathfrak{a}_{\bar 0}$, we have
\begin{gather*}
s_i^\mathfrak{g}(a,b)=
 \begin{cases}
s_i^\mathfrak{a}(a,b)+ \sigma_i^{\mathfrak{a}}(a,b)x & \text{if} \ |{\mathscr D}|+|\omega|=
{\bar 0}, \\
s_i^\mathfrak{a}(a,b) & \text{if} \ |{\mathscr D}|+|\omega|={\bar 1}.
\end{cases}
\end{gather*}
\item[$(ii)$] For every $a,b\in \mathfrak{a}_{\bar 0}$, we have
\begin{gather*}
\sigma_1^\mathfrak{a}({\mathscr D}(a),a) = \omega \bigl(({\mathscr D}+ {\mathscr D}^*)(a), \bigl(\mathrm{ad}_a^\mathfrak{a}\bigr)^{p-2}\circ {\mathscr D}(a)\bigr),\\
 \sigma_1^\mathfrak{a}([a,b], a) = \omega \bigl(({\mathscr D}+ {\mathscr D}^*)(a), \bigl(\mathrm{ad}_a^\mathfrak{a}\bigr)^{p-1}(b)\bigr).
\end{gather*}
\end{itemize}
\end{Lemma}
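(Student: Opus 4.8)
The statement has two parts, and both follow by unwinding the definition \eqref{defsigma} of $\sigma_i^\mathfrak{a}$ and comparing coefficients of powers of $\mu$, exactly as in the proof of Lemma~\ref{1lemma}.

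For part $(i)$, the plan is to write the $[p]$-map on the double extension $\mathfrak{g}$ in terms of the data on $\mathfrak{a}$. The formula $(a+b)^{[p]}=a^{[p]}+b^{[p]}+\sum_i s_i(a,b)$ holds in $\mathfrak{g}$, and by Lemma~\ref{1lemma} each $s_i^\mathfrak{g}(a,b)$ is a sum of iterated brackets $[x_1,[\dots,[x_{p-1},x_p]\dots]]$ taken in $\mathfrak{g}$. The bracket in $\mathfrak{g}$ differs from that in $\mathfrak{a}$ by a term living in the one-dimensional extension, which is governed precisely by $C(a,b)=\omega((\mathscr D+\mathscr D^*)(a),b)$ from Lemma~\ref{Ccocycle} (in the central direction $x$) when $|\mathscr D|+|\omega|=\bar 0$, and which contributes nothing to the $\mathfrak{a}_{\bar 0}$-valued $s_i$ when $|\mathscr D|+|\omega|=\bar 1$ for parity reasons. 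So I would expand $[x_1,[\dots,[x_{p-1},x_p]\dots]]_\mathfrak{g}$, observe that the correction to an innermost bracket $[x_{p-1},x_p]_\mathfrak{g}=[x_{p-1},x_p]_\mathfrak{a}+C(x_{p-1},x_p)x$ propagates outward with $x$ central (so only the innermost bracket can pick up the $C$-term and survive), collect those terms, and recognize the resulting sum as $\sum_i \sigma_i^\mathfrak{a}(a,b)\mu^{i-1}\,x$ via the defining relation \eqref{defsigma} — using $[\mu a+b,a]_\mathfrak{a}=[b,a]_\mathfrak{a}$ in exactly the same way as in Lemma~\ref{1lemma}. Matching coefficients of $\mu^{i-1}$ then gives the two cases.

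For part $(ii)$, both identities are obtained by specializing the pair $(a,b)$ in the defining expression for $\sigma_1^\mathfrak{a}$, namely $\sigma_1^\mathfrak{a}(a,b)=\omega(({\mathscr D}+{\mathscr D}^*)(b),(\mathrm{ad}^{\mathfrak a}_b)^{p-2}(a))$. For the first identity I substitute $b\mapsto a$ and $a\mapsto {\mathscr D}(a)$, which immediately yields $\omega(({\mathscr D}+{\mathscr D}^*)(a),(\mathrm{ad}_a^\mathfrak{a})^{p-2}({\mathscr D}(a)))=\omega(({\mathscr D}+{\mathscr D}^*)(a),(\mathrm{ad}_a^\mathfrak{a})^{p-2}\circ{\mathscr D}(a))$. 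For the second, I substitute $b\mapsto a$ and $a\mapsto [a,b]$, giving $\omega(({\mathscr D}+{\mathscr D}^*)(a),(\mathrm{ad}_a^\mathfrak{a})^{p-2}([a,b]))$, and then rewrite $(\mathrm{ad}_a)^{p-2}([a,b])=(\mathrm{ad}_a)^{p-2}(\mathrm{ad}_a(b))=(\mathrm{ad}_a)^{p-1}(b)$, which is the claim. One should double-check that the sign conventions from the super anti-symmetry of $\omega$ and the parity of $\mathscr D$ do not introduce an extra factor; since here $a,b\in\mathfrak{a}_{\bar 0}$ all parity exponents vanish, so this is harmless.

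The only genuine obstacle is bookkeeping in part $(i)$: one must verify carefully that in the iterated bracket $[x_1,[\dots,[x_{p-1},x_p]\dots]]_\mathfrak{g}$ no correction term other than the one from the innermost bracket survives, and that the combinatorial factor $1/\pi\{a\}$ matches on both sides so that the passage from $s$-style sums to the $\sigma$-style sum is exact. This is the same mechanism already validated in Lemma~\ref{1lemma}, so I would state it by reference to that argument rather than redoing the combinatorics. The parity split is then a one-line observation: $x$ is even, $\omega$ and $\mathscr D$ have complementary parity in the first case so $C(a,b)x$ lands in $\mathfrak{a}_{\bar 0}$-valued $s_i$'s, while in the second case $C$ vanishes on $\mathfrak{a}_{\bar 0}\times\mathfrak{a}_{\bar 0}$.
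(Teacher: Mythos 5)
Your part (ii) is exactly the paper's argument (set $\mu=0$ in \eqref{defsigma} and specialize the arguments), and your overall strategy for part (i) — expand the $\mathfrak{g}$-bracket, isolate the $x$-component, and match coefficients of $\mu^{i-1}$ against \eqref{defsigma} — is also the paper's route. The paper just packages the bookkeeping into the single identity
\begin{gather*}
\bigl(\mathrm{ad}_{\mu a+b}^\mathfrak{g}\bigr)^{p-1}(a)=\bigl(\mathrm{ad}_{\mu a+b}^\mathfrak{a}\bigr)^{p-1}(a)+\omega\bigl(({\mathscr D}+{\mathscr D}^*)(\mu a+b),\bigl(\mathrm{ad}_{\mu a+b}^\mathfrak{a}\bigr)^{p-2}(a)\bigr)x,
\end{gather*}
proved by iterating $\mathrm{ad}^\mathfrak{g}_{\mu a+b}$, which avoids the $1/\pi\{a\}$ combinatorics of Lemma \ref{1lemma} altogether.

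However, the key bookkeeping claim in your part (i) is backwards, and as literally stated the step fails. You assert that the correction from the \emph{innermost} bracket $[x_{p-1},x_p]_\mathfrak{g}=[x_{p-1},x_p]_\mathfrak{a}+C(x_{p-1},x_p)x$ ``propagates outward with $x$ central'' and is the one that survives. Precisely because $[\mathfrak{a},x]_\mathfrak{g}=0$ in all four extensions, the opposite happens: any $x$-component created at an inner stage is annihilated at the very next bracketing, $[x_{k},C(\cdot,\cdot)x]_\mathfrak{g}=0$, so the innermost corrections contribute nothing. The only surviving $x$-term is the one produced by the \emph{outermost} bracket (equivalently, by the last application of $\mathrm{ad}^\mathfrak{g}_{\mu a+b}$), namely $\omega\bigl(({\mathscr D}+{\mathscr D}^*)(x_1),[x_2,[\dots,[x_{p-1},x_p]_\mathfrak{a}\dots]_\mathfrak{a}]_\mathfrak{a}\bigr)x$; this is exactly what the definition \eqref{defsigma} and Lemma \ref{sigmabra} encode (note that in \eqref{defsigma} the operator ${\mathscr D}+{\mathscr D}^*$ sits on the outer element $\mu a+b$, not inside the iterated bracket). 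If you collected the innermost corrections as planned you would obtain zero rather than $\sum_i\sigma_i^\mathfrak{a}(a,b)\mu^{i-1}x$. The fix is local: either swap ``innermost'' for ``outermost'' and track the single surviving term, or argue by induction on powers of $\mathrm{ad}^\mathfrak{g}_{\mu a+b}$ as in the displayed identity. Your parity split is otherwise fine, with the small caveat that $x$ is even only in the case $|{\mathscr D}|+|\omega|=\bar 0$; in the case $|{\mathscr D}|+|\omega|=\bar 1$ the element $x$ is odd, but the correction vanishes anyway because $C$ (and more generally $\omega(({\mathscr D}+{\mathscr D}^*)(\cdot),\cdot)$) is zero on $\mathfrak{a}_{\bar 0}\times\mathfrak{a}_{\bar 0}$ for parity reasons, which is the point you need.
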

\begin{proof}
For part (i), since
\[
\big(\mathrm{ad}_{\mu a+b}^\mathfrak{g}\big)^{p-1}(a)=\big(\mathrm{ad}_{\mu a+b}^\mathfrak{a}\big)^{p-1}(a)+\omega\bigl(({\mathscr D}+{\mathscr D}^*)(\mu a+b),\big(\mathrm{ad}_{\mu a+b}^\mathfrak{a}\big)^{p-2}(a) \bigr)x,
\]
the result follows immediately. Part (ii) follows from the very definition of $\sigma^\mathfrak{a}$ given in equation~(\ref{defsigma}), by taking $\mu=0$ and choose $a$ and $b$ appropriately.
\end{proof}

\begin{Lemma} \label{sigmabra} For all $a,b\in \mathfrak{a}_{\bar 0}$, we have
\begin{gather*}
 \sum_{i=1}^{p-1}\sigma_i^{\mathfrak{a}}(a,b)=\sum_{\underset{x_{p-1}=b,~x_p=a}{x_k=a \text{ or } b}}\frac{1}{\pi\{a\}}\omega( ({\mathscr D}+{\mathscr D}^*)(x_1),[x_2,[\dots ,[x_{p-1},x_p]_{\mathfrak{a}}\dots ]_{\mathfrak{a}}]_{\mathfrak{a}} ).
\end{gather*}
\end{Lemma}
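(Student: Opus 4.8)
The plan is to mimic exactly the proof of Lemma~\ref{1lemma}, replacing the bracket $[b,a]$ by the bilinear expression that defines $\sigma_i^\mathfrak{a}$ in~(\ref{defsigma}). First I would expand $\omega\bigl(({\mathscr D}+{\mathscr D}^*)(\mu a+b),(\mathrm{ad}^\mathfrak{a}_{\mu a+b})^{p-2}(a)\bigr)$ by multilinearity of $\omega$ in its first slot and by the iterated bracket in the second slot. Crucially, $\mathrm{ad}^\mathfrak{a}_{\mu a+b}(a)=[\mu a+b,a]_\mathfrak{a}=[b,a]_\mathfrak{a}$, so as in~(\ref{sibracket}) the innermost bracket is independent of $\mu$; the whole expression becomes
\[
\sum_{\substack{x_1=a \text{ or } b\\ x_k=a \text{ or } b}} \omega\bigl(({\mathscr D}+{\mathscr D}^*)(x_1),[x_2,[\dots,[x_{p-2},[b,a]_\mathfrak{a}]_\mathfrak{a}\dots]_\mathfrak{a}]_\mathfrak{a}\bigr)\mu^{\#\{k\,:\,x_k=a\}},
\]
where $x_1$ ranges over the first argument of $({\mathscr D}+{\mathscr D}^*)$ and $x_2,\dots,x_{p-2}$ over the nested ad's. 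Comparing the coefficient of $\mu^{i-1}$ with the right-hand side of~(\ref{defsigma}), exactly as the card/$\pi$ bookkeeping in Lemma~\ref{1lemma}, the number of variables equal to $a$ among $x_1,\dots,x_{p-2},a$ equals $i$, so $\pi\{a\}=i$, giving
\[
\sigma_i^\mathfrak{a}(a,b)=\sum_{\substack{x_k=a \text{ or } b\\ \mathrm{card}\{k\,:\,x_k=a\}=i-1}}\frac{1}{\pi\{a\}}\omega\bigl(({\mathscr D}+{\mathscr D}^*)(x_1),[x_2,[\dots,[x_{p-2},[b,a]_\mathfrak{a}]_\mathfrak{a}\dots]_\mathfrak{a}]_\mathfrak{a}\bigr).
\]

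Then I would sum over $i$ from $1$ to $p-1$. Summing over all admissible $i$ collapses the constraint on the number of $a$'s, and re-indexing the innermost term $[b,a]_\mathfrak{a}$ as $[x_{p-1},x_p]_\mathfrak{a}$ with $x_{p-1}=b$, $x_p=a$ turns the sum into
\[
\sum_{\substack{x_k=a \text{ or } b\\ x_{p-1}=b,\ x_p=a}}\frac{1}{\pi\{a\}}\omega\bigl(({\mathscr D}+{\mathscr D}^*)(x_1),[x_2,[\dots,[x_{p-1},x_p]_\mathfrak{a}\dots]_\mathfrak{a}]_\mathfrak{a}\bigr),
\]
which is precisely the asserted formula. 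This last step is the verbatim analogue of the final display in the proof of Lemma~\ref{1lemma}.

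The only genuine subtlety — the step I expect to be the main obstacle — is making sure the combinatorial count of $a$'s matches the power of $\mu$ in the super setting and that the $\pi\{a\}=i$ identification is legitimate; this is where one must be careful that $({\mathscr D}+{\mathscr D}^*)(x_1)$ occupies the "slot" $x_1$ on equal footing with the $x_k$ inside the ad's, so that the variable count includes $x_1$. Since all the elements involved are even ($a,b\in\mathfrak{a}_{\bar 0}$), no Koszul signs intervene, and the argument is formally identical to the non-symplectic Lemma~\ref{1lemma}; I would simply remark that the computation goes through \emph{mutatis mutandis}, pointing to the structural parallel between $\frac{1}{\pi\{a\}}[x_1,[\dots,[x_{p-1},x_p]\dots]]$ there and $\frac{1}{\pi\{a\}}\omega(({\mathscr D}+{\mathscr D}^*)(x_1),[x_2,[\dots,[x_{p-1},x_p]_\mathfrak{a}\dots]_\mathfrak{a}]_\mathfrak{a})$ here, and leave the routine verification to the reader.
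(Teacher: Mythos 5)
Your proposal is correct and is exactly the argument the paper intends: its proof of this lemma simply says ``Similar to that of Lemma~\ref{1lemma}'', and your expansion of \eqref{defsigma} in powers of $\mu$, the identification $\pi\{a\}=i$ (counting the slot inside ${\mathscr D}+{\mathscr D}^*$ together with the innermost $a$), and the final summation over $i$ reproduce that argument faithfully.
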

\begin{proof}
Similar to that of Lemma \ref{1lemma}.
\end{proof}
We are now ready to introduce another map $P$ satisfying certain conditions. The goal here is to construct a 2-cocycle $(C,P)\in Z^2_*(\mathfrak{a}; \mathbb K)$, as we have already shown in Lemma \ref{Ccocycle} that $C\in Z^2_{\text{CE}}(\mathfrak{a}; \mathbb K)$.This construction will be used on Section \ref{results} when we study symplectic double extensions. Let us define a map \begin{equation}
 \label{mapP} P\colon\ \mathfrak{a}_{\bar 0} \rightarrow \mathbb K, \qquad a\mapsto P(a)
 \end{equation}
satisfying the two conditions
\begin{gather}
P(\delta a) = \delta^p P(a)\qquad \text{for all}\ a \in \mathfrak{a}_{\bar 0}\ \text{and}\ \delta\in \mathbb F,\label{CondP1}\\
P(a+b) = P(a)+P(b)+ \sum_{i=1}^{p-1}\sigma_i^{\mathfrak{a}}(a,b) \qquad \text{for all} \ a, b \in \mathfrak{a}_{\bar 0}.\label{CondP2}
 \end{gather}

 \begin{Lemma} \label{*property}
The map $P$ has the $*$-property with respect to the $2$-cochain $C$ defined as in equation~\eqref{CEcocycle}.
\end{Lemma}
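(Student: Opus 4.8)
The plan is to exploit that the coefficient module here is $M=\mathbb K$ with the \emph{trivial} $\mathfrak a$-action, which drastically collapses the $*$-property. Applying the definition of the $*$-property with $\theta=P$ and $\varphi=C$, every summand $(-1)^kx_p\cdots x_{p-k+1}\,C(\dots)$ with $k\ge 1$ is obtained by letting at least one element of $\mathfrak a$ act on an element of $\mathbb K$, hence it vanishes; only the $k=0$ term $C([[\dots[x_1,x_2],x_3]\dots,x_{p-1}],x_p)$ survives. Thus ``$P$ has the $*$-property with respect to $C$'' reduces to the two assertions $P(\delta a)=\delta^pP(a)$ and
\[
P(a+b)=P(a)+P(b)+\sum_{\underset{x_1=a,\,x_2=b}{x_i=a\text{ or }b}}\frac{1}{\pi\{a\}}\,C\bigl([[\dots[x_1,x_2],x_3]\dots,x_{p-1}],x_p\bigr),\qquad a,b\in\mathfrak a_{\bar 0}.
\]
The first assertion is condition \eqref{CondP1} verbatim; by condition \eqref{CondP2}, the second is equivalent to the bracket identity
\[
\sum_{i=1}^{p-1}\sigma_i^{\mathfrak a}(a,b)=\sum_{\underset{x_1=a,\,x_2=b}{x_i=a\text{ or }b}}\frac{1}{\pi\{a\}}\,C\bigl([[\dots[x_1,x_2],x_3]\dots,x_{p-1}],x_p\bigr),
\]
and this is all that remains to be proved.

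If $|\mathscr D|+|\omega|=\bar 1$ then $\omega((\mathscr D+\mathscr D^*)(a),b)=0$ for $a,b\in\mathfrak a_{\bar 0}$, so both sides above vanish and there is nothing to do; hence I assume $|\mathscr D|+|\omega|=\bar 0$, so that all the $x_i$ are even and no Koszul signs occur. I would then pass to the one-dimensional central extension $\mathfrak g=\mathfrak a\oplus\mathbb K x$ of $\mathfrak a$ by the (even) cocycle $C$ provided by Lemma \ref{Ccocycle}, that is $[a,b]_{\mathfrak g}=[a,b]_{\mathfrak a}+C(a,b)x$ with $x$ central --- this is exactly the extension underlying part $(i)$ of the lemma preceding Lemma \ref{sigmabra}, which reads $s_i^{\mathfrak g}(a,b)=s_i^{\mathfrak a}(a,b)+\sigma_i^{\mathfrak a}(a,b)x$. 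The extra input needed is the ``left-nested'' form of Lemma \ref{1lemma}, valid in any Lie algebra of odd characteristic:
\[
\sum_{i=1}^{p-1}s_i(a,b)=\sum_{\underset{x_1=a,\,x_2=b}{x_i=a\text{ or }b}}\frac{1}{\pi\{a\}}\,[[\dots[x_1,x_2],x_3]\dots,x_p].
\]
It follows from Lemma \ref{1lemma} by rewriting $[[\dots[x_1,x_2],x_3]\dots,x_p]=(-1)^{p-2}[x_p,[x_{p-1},[\dots,[x_3,[x_1,x_2]]\dots]]]$, using $(-1)^{p-2}=-1$ since $p$ is odd, and relabelling the word $(x_1,\dots,x_p)\mapsto(x_p,\dots,x_1)$, which preserves $\pi\{a\}$.

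Now I would apply this left-nested identity inside $\mathfrak g$. Because $[x,\mathfrak a]_{\mathfrak g}=0$, a straightforward induction shows that a left-nested bracket of $x_1,\dots,x_p$ taken in $\mathfrak g$ equals the corresponding bracket in $\mathfrak a$ plus the single correction term $C([[\dots[x_1,x_2],x_3]\dots,x_{p-1}],x_p)\,x$. Extracting the component along $x$ on both sides of the left-nested identity applied in $\mathfrak g$, and using part $(i)$ of the lemma above on the left-hand side, yields exactly the required equality $\sum_{i=1}^{p-1}\sigma_i^{\mathfrak a}(a,b)=\sum\frac{1}{\pi\{a\}}C([[\dots[x_1,x_2],x_3]\dots,x_{p-1}],x_p)$. (One can also avoid the extension: write $C(u,v)=\omega((\mathscr D+\mathscr D^*)(u),v)$, distribute the derivation $\mathscr D+\mathscr D^*$ across the left-nested bracket by the Leibniz rule, and use the $2$-cocycle identity for $\omega$ together with Lemma \ref{sigmabra} to reach the right-nested form --- but routing through $\mathfrak g$ keeps the combinatorics tidiest.)

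The load-bearing points, and the main obstacle, are $(a)$ recognising the trivial-module collapse and $(b)$ producing the left-nested reformulation of Lemma \ref{1lemma}, so that the left-nested bracket appearing in the $*$-property lines up with the right-nested brackets built into $\mathrm{ad}$ in \eqref{defsigma}; once these are in place the remaining work is just the bookkeeping already carried out in the proof of Lemma \ref{1lemma}. Matching the $\delta$-homogeneity condition with \eqref{CondP1} and invoking \eqref{CondP2} are immediate.
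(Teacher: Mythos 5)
Your proposal is correct, and its core differs from the paper's argument in how the key identity is established. Both proofs start identically: since $\mathbb K$ is a trivial module, only the $k=0$ term of the $*$-property survives, so by \eqref{CondP1} and \eqref{CondP2} everything reduces to the identity $\sum_{i=1}^{p-1}\sigma_i^{\mathfrak a}(a,b)=\sum\frac{1}{\pi\{a\}}\,C\bigl([[\dots[a,b],x_3]\dots,x_{p-1}],x_p\bigr)$. The paper proves this directly: it invokes Lemma \ref{sigmabra} (the right-nested expression for $\sum_i\sigma_i^{\mathfrak a}$), reverses the word, flips the nesting at the cost of $(-1)^{p-2}$, and then uses antisymmetry of $\omega$ together with the self-adjointness of $\mathscr D+\mathscr D^*$ with respect to $\omega$ to move the derivation from the single letter $x_p$ onto the long bracket, landing exactly on the $k=0$ term. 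You bypass Lemma \ref{sigmabra}: you encode the reversal once and for all in a left-nested reformulation of Lemma \ref{1lemma}, and then read the identity off as the $\mathbb Kx$-component of that reformulation applied in the central extension $\mathfrak a\oplus\mathbb Kx$ defined by the cocycle $C$ of Lemma \ref{Ccocycle}, using $s_i^{\mathfrak g}=s_i^{\mathfrak a}+\sigma_i^{\mathfrak a}x$ from the lemma preceding Lemma \ref{sigmabra} (whose proof is independent of the present statement, so there is no circularity). What your route buys: the self-adjointness manipulation is absorbed into the antisymmetry of the extended bracket, and the parity discussion is made explicit (your observation that both sides vanish when $|\mathscr D|+|\omega|=\bar 1$ is left implicit in the paper); what the paper's route buys is brevity, since Lemma \ref{sigmabra} is already available and no auxiliary algebra is introduced. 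One small bookkeeping remark: in deriving the left-nested form of Lemma \ref{1lemma}, besides the factor $(-1)^{p-2}$ from flipping the nesting, the relabelling also replaces the innermost bracket $[a,b]$ by $[b,a]$, giving a second sign; the two cancel because $p$ is odd, so the identity you state is correct, but both signs should be recorded. With that, the remaining steps (the collapse of the module action, the invertibility of $\pi\{a\}$ since $1\le\pi\{a\}\le p-1$, and the induction producing the single $C$-correction in the left-nested bracket of $\mathfrak a\oplus\mathbb Kx$) are sound.
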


\begin{proof}
Using Lemma \ref{sigmabra}, it is enough to show that (for all $a,b\in \mathfrak{a}_{\bar 0}$)
\begin{align*}
 &\sum_{\underset{x_{p-1}=b,~x_p=a}{x_k=a \text{ or } b}}\frac{1}{\pi\{a\}}\omega( ({\mathscr D}+{\mathscr D}^*)(x_1),[x_2,[\dots ,[x_{p-1},x_p]_{\mathfrak{a}}\dots ]_{\mathfrak{a}}]_{\mathfrak{a}} )\\
 &= \sum_{\underset{x_1=a,~x_2=b}{x_i=a \text{ or } b}}\frac{1}{\pi\{a\}}\sum_{k=0}^{p-2}(-1)^kx_p\dots x_{p-k+1}\omega(({\mathscr D}+{\mathscr D}^*)([[\dots [x_1,x_2],x_3]\dots ,x_{p-k-1}]_{\mathfrak{a}}),x_{p-k}).
\end{align*}
Equivalently,
\begin{align*}
 \sum_{i=1}^{p-1}\sigma_i^{\mathfrak{a}}(a,b)
 =&\sum_{x_k=a \text{ or } b}\frac{1}{\pi\{a\}} \omega( ({\mathscr D}+{\mathscr D}^*)(x_1),[x_2,[x_3,[\dots ,[b,a]_{\mathfrak{a}}\dots ]_{\mathfrak{a}} ]_{\mathfrak{a}} )\\
 =&\sum_{x_k=a \text{ or } b}\frac{1}{\pi\{a\}} \omega( ({\mathscr D}+{\mathscr D}^*)(x_1),[x_2,[x_3,[\dots ,[b,a]_{\mathfrak{a}}\dots ]_{\mathfrak{a}} ]_{\mathfrak{a}} ]_{\mathfrak{a}} )\\
 =&\sum_{x_k=a \text{ or } b}\frac{1}{\pi\{a\}} \omega( ({\mathscr D}+{\mathscr D}^*)(x_p),[x_{p-1},[x_{p-2},[\dots ,[b,a]_{\mathfrak{a}}\dots ]_{\mathfrak{a}} ]_{\mathfrak{a}} ]_{\mathfrak{a}} )\\
 =&(-1)^{p-2}\sum_{x_k=a \text{ or } b}\frac{1}{\pi\{a\}} \omega( ({\mathscr D}+{\mathscr D}^*)(x_p),[[\dots [a,b]_{\mathfrak{a}},x_3]_{\mathfrak{a}},\dots ,x_{p-1}]_{\mathfrak{a}} )\\
 =&\sum_{x_k=a \text{ or } b}\frac{1}{\pi\{a\}} \omega( [[\dots [a,b]_{\mathfrak{a}},x_3]_{\mathfrak{a}},\dots ,x_{p-1}]_{\mathfrak{a}},({\mathscr D}+{\mathscr D}^*)(x_p) )\\
 =&\sum_{x_k=a \text{ or } b}\frac{1}{\pi\{a\}} \omega(({\mathscr D}+{\mathscr D}^*)( [[\dots [a,b]_{\mathfrak{a}},x_3]_{\mathfrak{a}},\dots ,x_{p-1}]_{\mathfrak{a}}),x_p )\\
 =&\sum_{\underset{x_1=a,~x_2=b}{x_i=a \text{ or } b}}\frac{1}{\pi\{a\}}\times\\
 &\quad\sum_{k=1}^{p-2}(-1)^kx_p\dots x_{p-k+1}\omega(({\mathscr D}+{\mathscr D}^*)([[\dots [x_1,x_2]_\mathfrak{a},x_3]_\mathfrak{a}\dots ,x_{p-k-1}]_{\mathfrak{a}}),x_{p-k})\\
 &\qquad+ \sum_{\underset{x_1=a,~x_2=b}{x_i=a \text{ or } b}}\frac{1}{\pi\{a\}}\omega(({\mathscr D}+{\mathscr D}^*)([[\dots [x_1,x_2]_\mathfrak{a},x_3]_\mathfrak{a}\dots ,x_{p-1}]_{\mathfrak{a}}),x_{p}).
\end{align*}
The last equality holds as every term in the inner sum \smash{$ \sum_{k=1}^{p-2}$} vanishes, since $\mathbb{K}$ is a trivial $\mathfrak{a}$-module.
\end{proof}

\begin{Proposition}\label{CP2cocycle}
Let $C$ be defined as in \eqref{CEcocycle}, and $P$ defines as in \eqref{mapP}. The $2$-cochain $(C,P)\in C^2_*(\mathfrak{a};\mathbb{K})$ is a restricted $2$-cocycle if and only if
\[
\omega(({\mathscr D}+{\mathscr D}^*)\bigl(b^{[p]}, a\bigr) = \omega\bigl(\bigl({\mathscr D}+{\mathscr D}^*\bigr)(b), \mathrm{ad}_b^{p-1}(a)\bigr)\qquad \text{ for all}\ a,b\in \mathfrak{a}_{\bar 0}.
\]
\end{Proposition}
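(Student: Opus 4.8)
The plan is to unwind the definition of a restricted $2$-cocycle for the pair $(C,P)$. By definition, $(C,P)\in Z^2_*(\mathfrak{a};\mathbb{K})$ means that $d^2_{\rm CE}C=0$ and $\mathrm{Ind}^2(C,P)=0$. The first condition is already handled: Lemma~\ref{Ccocycle} shows that $C\in Z^2_{\rm CE}(\mathfrak{a};\mathbb{K})$, so $d^2_{\rm CE}C=0$ holds unconditionally. Hence the entire content of the statement is the identity $\mathrm{Ind}^2(C,P)=0$, and the proof reduces to simplifying
\[
\mathrm{Ind}^2(C,P)(a,b)=C\bigl(a,b^{[p]}\bigr)-\sum_{i+j=p-1}(-1)^i b^i\, C\bigl([[\dots[a,b],\dots,b],b\bigr)+a\,P(b)
\]
for $a,b\in\mathfrak{a}_{\bar 0}$, and checking that it vanishes exactly when the displayed identity on $({\mathscr D}+{\mathscr D}^*)$ holds.

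First I would dispose of the ``module'' terms: since $\mathbb{K}$ is the trivial $\mathfrak{a}$-module, every expression of the form $a\cdot(-)$ or $b^i\cdot(-)$ acting on an element of $\mathbb{K}$ is zero. Thus $a\,P(b)=0$ and all the terms in the sum $\sum_{i+j=p-1}$ with $i\geq 1$ vanish, leaving only the $i=0$ term, namely $-C\bigl((\mathrm{ad}_b)^{p-1}(a)\bigr)$ — but $C$ is bilinear, so this should be read as $-C$ applied with one of its slots being $(\mathrm{ad}_b)^{p-1}(a)$; tracking the definition of $\mathrm{Ind}^2$ carefully, the $j=p-1$ nested bracket $[[\dots[a,b],\dots],b]$ is precisely $(\mathrm{ad}_b)^{p-1}(a)$ (up to a sign $(-1)^{p-1}$ that I should keep track of, though in characteristic $p$ one has $(-1)^{p-1}=1$ when $p$ is odd). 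So the whole induced cochain collapses to
\[
\mathrm{Ind}^2(C,P)(a,b)=C\bigl(a,b^{[p]}\bigr)-C\bigl((\mathrm{ad}_b)^{p-1}(a),b\bigr)\quad\text{(up to sign)}.
\]
Now I substitute the definition $C(x,y)=\omega\bigl(({\mathscr D}+{\mathscr D}^*)(x),y\bigr)$. The key observation — which must be invoked — is that the operator ${\mathscr D}+{\mathscr D}^*$ is $\omega$-self-adjoint in the appropriate graded sense: indeed, by the defining property of ${\mathscr D}^*$, $\omega({\mathscr D}(x),y)=(-1)^{|x||{\mathscr D}|}\omega(x,{\mathscr D}(y))$, so $\omega(({\mathscr D}+{\mathscr D}^*)(x),y)=\pm\,\omega(x,({\mathscr D}+{\mathscr D}^*)(y))$. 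Therefore $C$ is a \emph{symmetric} (super-symmetric) bilinear form built from $\omega$, and I can rewrite $C\bigl((\mathrm{ad}_b)^{p-1}(a),b\bigr)=\omega\bigl(({\mathscr D}+{\mathscr D}^*)((\mathrm{ad}_b)^{p-1}(a)),b\bigr)$, then move $({\mathscr D}+{\mathscr D}^*)$ onto the second slot, or alternatively use the symmetry directly to match the right-hand side of the claimed identity, which reads $\omega\bigl(({\mathscr D}+{\mathscr D}^*)(b),(\mathrm{ad}_b)^{p-1}(a)\bigr)$. Similarly $C\bigl(a,b^{[p]}\bigr)=\omega\bigl(({\mathscr D}+{\mathscr D}^*)(a),b^{[p]}\bigr)$, and by symmetry this equals $\pm\,\omega\bigl(({\mathscr D}+{\mathscr D}^*)(b^{[p]}),a\bigr)$, matching the left-hand side of the claimed identity. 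Setting the two terms equal and cancelling the common sign yields precisely the stated equivalence.

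The main obstacle I anticipate is \textbf{bookkeeping of signs}: one must carefully reconcile (a) the sign $(-1)^i$ appearing in $\mathrm{Ind}^2$, (b) the sign $(-1)^{p-1}=1$ from the iterated bracket, (c) the grading signs $(-1)^{|x||{\mathscr D}|}$ coming from the adjoint relation, and (d) the fact that the parity of $C$ as a cochain is $|{\mathscr D}|+|\omega|$, which affects the super-symmetry of $C$. Since the statement restricts to $a,b\in\mathfrak{a}_{\bar 0}$, many of these parities simplify — $a$, $b$, $[b,a]$, $b^{[p]}$ and $(\mathrm{ad}_b)^{p-1}(a)$ are all even — so the grading signs involving $a$ and $b$ trivialize, and only $|{\mathscr D}|$ (equivalently $|\omega|$, since $C$ has parity $|{\mathscr D}|+|\omega|$) can intervene. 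I would verify that whatever sign $|{\mathscr D}|$ contributes appears symmetrically on both sides and hence cancels, so that the final equivalence is sign-free as stated. A secondary, purely notational point is that the displayed right-hand side in the proposition is missing a closing parenthesis and an $\omega$ on the left term as typeset; I would present the identity cleanly as $\omega\bigl(({\mathscr D}+{\mathscr D}^*)(b^{[p]}),a\bigr)=\omega\bigl(({\mathscr D}+{\mathscr D}^*)(b),(\mathrm{ad}_b)^{p-1}(a)\bigr)$ for all $a,b\in\mathfrak{a}_{\bar 0}$ and show $\mathrm{Ind}^2(C,P)(a,b)$ is, up to an overall nonzero scalar, the difference of the two sides.
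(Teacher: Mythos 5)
Your proposal is correct and follows essentially the same route as the paper's proof: both reduce the statement to $\mathrm{Ind}^2(C,P)=0$ via Lemma~\ref{Ccocycle}, kill the trivial-module terms so only the $i=0$ summand and $C\bigl(a,b^{[p]}\bigr)$ survive, and then use the self-adjointness of ${\mathscr D}+{\mathscr D}^*$ together with the anti-symmetry of $\omega$ to identify the result, up to sign, with the difference of the two sides of the stated identity. The only small slip is calling $C$ symmetric (it is super anti-symmetric on even arguments), but since you only use the relation $C(x,y)=\pm C(y,x)$ and the equivalence concerns vanishing of a difference up to a nonzero overall sign, this does not affect the argument.
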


\begin{proof}
We have already proved in Lemma \ref{Ccocycle} that $C\in Z^2_{\mathrm{CE}}(\mathfrak{a}; \mathbb{K})$, and that $P$ satisfies the $*$-property with respect to $C$ in Lemma \ref{*property}. It remains to check that $\mathrm{Ind}^2(C,P)(a,b)=0$, for every $a,b\in\mathfrak{a}_0.$ Indeed,
\begin{align*}
 \mathrm{Ind}^2(C,P)(a,b)&=C\bigl( a,b^{[p]}\bigr) - \sum_{i+j=p-1}(-1)^ib^i C\bigl( [[\dots [a,\overset{j\text{ terms}}{\overbrace{b],\dots ],b}]},b\bigr)+aP(b)\\
 &=C\bigl(a,b^{[p]}\bigr)-C\bigl([[\dots [a,\overset{p-1\text{ terms}}{\overbrace{b],\dots ],b}]},b\bigr)\\
 &=\omega\bigl(\bigl({\mathscr D}+{\mathscr D}^*\bigr)(a),b^{[p]}\bigr)-\omega(({\mathscr D}+{\mathscr D}^*)([[\dots [a,\overset{p-1\text{ terms}}{\overbrace{b],\dots ],b}]},b)\\
 &=-\omega\bigl(\bigl({\mathscr D}+{\mathscr D}^*\bigr)\bigl(b^{[p]}\bigr),a\bigr)+\omega(({\mathscr D}+{\mathscr D}^*)(b),\mathrm{ad}_b^{p-1}(a)).\tag*{\qed}
\end{align*}
\renewcommand{\qed}{}
\end{proof}
\section{Restricted orthosymplectic double extensions}\label{results}
 \subsection[D\_0 -extensions]{$\boldsymbol{{\mathscr D}_{\bar 0}}$-extensions}\label{D0omega0}

Let $(\mathfrak{a}, \omega_\mathfrak{a})$ be a~restricted orthosymplectic quasi-Frobenius Lie superalgebra, and let ${\mathscr D}\in \mathfrak{der}_{\bar 0}^p
(\mathfrak{a})$ be a restricted derivation.

Consider the map $ P\colon\mathfrak{a}_{\bar 0} \mapsto \mathbb{K} $ defined as in \eqref{mapP} and
satisfying the conditions \eqref{CondP1} and \eqref{CondP2}. We have shown in Lemma \ref{*property} that $P$ has the $*$-property with respect to the cocycle $C$ given by~\eqref{CEcocycle}.

Consider now the following two
maps
\begin{gather*}
\Omega\colon\ \mathfrak{a}\wedge \mathfrak{a} \rightarrow \mathbb K, \qquad (a,b) \mapsto \omega_\mathfrak{a}( {\mathscr D} \circ {\mathscr D}(a) + 2 {\mathscr D}^* \circ {\mathscr D}(a)+ {\mathscr D}^* \circ {\mathscr D}^*(a) +\lambda ( {\mathscr D} + {\mathscr D} ^* )(a) , b) , \\
T\colon\ \mathfrak{a}_{\bar 0} \rightarrow \mathbb K, \qquad a \mapsto \omega_\mathfrak{a}(({\mathscr D}+{\mathscr D}^*)(a),\bigl(\mathrm{ad}_a^\mathfrak{a}\bigr)^{p-2}({\mathscr D}(a)))+\lambda P(a).
\end{gather*}

Let us suppose that $T$ satisfies the $*$-property with respect to $\Omega$; and that $(\Omega, T)$ is in $B^2_*(\mathfrak{a};\mathbb{K})$.
Let us write $(\Omega, T)= \bigl(d^1_{\mathrm{CE}} (\chi), \mathrm{ind}^1(\chi) \bigr)$ for some $\chi\in C^1_*(\mathfrak{a}; \mathbb{K})$; namely, $\Omega(a,b)=\chi\bigl([a,b]_\mathfrak{a}\bigr)$ for every $a,b\in \mathfrak{a}$ and $T(a)=\chi(a^{[p]})$ for every $a\in \mathfrak{a}_{\bar 0}$.
 Since $\omega_\mathfrak{a}$ is non-degenerate, there exists~$Z_\Omega\in \mathfrak{a}$ such that
\begin{equation}
\label{Eazero}
\Omega(a,b)=\omega_\mathfrak{a}(Z_\Omega,[a,b]_\mathfrak{a}),\qquad \forall a,b\in \mathfrak{a} \qquad \text{and} \qquad T(a)=\omega_\mathfrak{a}\bigl(Z_\Omega,a^{[p]}\bigr), \qquad \forall a\in \mathfrak{a}_{\bar 0}.
\end{equation}

\begin{Theorem}[${\mathscr D}_{\bar 0}$-extension -- the case where $\omega$ is orthosymplectic]\label{MainTh} Let $\mathfrak{a}$ be a~restricted orthosymplectic quasi-Frobenius Lie superalgebra. Let ${\mathscr D}\in \mathfrak{der}_{\bar 0}^p(\mathfrak{a})$ be a restricted derivation satisfying the $p$-property \eqref{SConp}. Suppose further that
\[
(\Omega,T) \in B^2_*(\mathfrak{a}; \mathbb{K}) \qquad \text{and} \qquad (C, P)\in Z^2_*(\mathfrak{a}; \mathbb{K}).
\]
\begin{itemize}\itemsep=0pt
\item[$(i)$] There exists a~Lie superalgebra structure on $\mathfrak{g}:=\mathscr{K} \oplus \mathfrak{a} \oplus \mathscr{K} ^*$, where $\mathscr{K} :=\mathrm{Span}\{x\}$ for~$x$ even, defined as follows $($for any $a, b\in \mathfrak{a})$:
\begin{gather*}
[x ,x^*]_\mathfrak{g}=\lambda x, \qquad [a,b]_\mathfrak{g} := [a,b]_\mathfrak{a} + \omega_\mathfrak{a}( {\mathscr D}(a)+{\mathscr D}^*(a), b)x, \\
 [x^*,a]_\mathfrak{g} := {\mathscr D}(a)+ \omega_\mathfrak{a}(Z_\Omega,a)x,
\end{gather*}
where $\lambda\in \mathbb{K}$ and $Z_\Omega\in \mathfrak{a}$ is as in equation~\eqref{Eazero}. There exists a~closed anti-symmetric orthosymplectic form $\omega_\mathfrak{g}$ on $\mathfrak{g}$ defined as follows:
\begin{gather*}
{\omega_\mathfrak{g}}\vert_{\mathfrak{a} \times \mathfrak{a}}:= \omega_\mathfrak{a}, \qquad \omega_\mathfrak{g}(\mathfrak{a},\mathscr{K} ):=\omega_\mathfrak{g}(\mathfrak{a},\mathscr{K} ^*):=0, \\ \omega_\mathfrak{g}(x^*,x):=1, \qquad
\omega_\mathfrak{g}(x,x):=\omega_\mathfrak{g}(x^*,x^*) :=0.
\end{gather*}
\item[$(ii)$] There exists a $p|2p$-map on the double extension $\mathfrak{g}$ of $\mathfrak{a}$ given by
\begin{gather*}
a^{[p]_\mathfrak{g}} = a^{[p]_\mathfrak{a}} +P(a) x,\qquad
(x^*)^{[p]_\mathfrak{g}} = \gamma x^*+ a_0 + \tilde \lambda x,\qquad
x^{[p]_\mathfrak{g}} = b_0+ \sigma x +\delta x^*,
\end{gather*}
where
\begin{itemize}\itemsep=0pt
\item[$(a)$] The case where $\lambda\not =0$:
\begin{gather*}
 {\mathscr D}(a_0) = 0, \qquad \tilde \lambda = \frac{1}{\lambda} \omega(Z_\Omega, a_0), \qquad\gamma = \lambda^{p-1}, \qquad \delta=0, \\
 {\mathscr D}(b_0) = 0, \qquad \sigma = \frac{1}{\lambda} \omega(Z_\Omega, b_0), \qquad{\mathscr D}^*(b_0) = 0, \qquad b_0\in {\mathfrak z}(\mathfrak{a}),
\end{gather*}
and
\begin{equation}
\label{eqst1}
{\mathscr D}^*(a_0)=\sum_{1\leq i \leq p-1} (-1)^{p-1-i} \lambda^{p-1-i} {{\mathscr D}^*}^i(Z_\Omega).
\end{equation}

\item[$(b)$] The case where $\lambda=0$ and ${\mathscr D}\not =-\delta^{-1}\mathrm{ad}_{b_0}$:
\begin{gather*}
{\mathscr D}(a_0) = 0, \qquad \omega(Z_\Omega, a_0)=0, \qquad\delta = 0, \\
 {\mathscr D}(b_0) = 0,\qquad \omega(Z_\Omega, b_0)=0, \qquad {\mathscr D}^*(b_0) = 0, \qquad b_0\in {\mathfrak z}(\mathfrak{a}),
 \end{gather*}
and
\begin{equation}
\label{eqst2}
{\mathscr D}^*(a_0)+\gamma Z_\Omega= {{\mathscr D}^*}^{p-1}(Z_\Omega).
\end{equation}

\item[$(c)$] The case where $\lambda=0$ and ${\mathscr D}=-\delta^{-1}\mathrm{ad}_{b_0}$ is inner:
\begin{gather*}
{\mathscr D}(a_0) = 0, \qquad \omega(Z_\Omega, a_0)=0, \\
 {\mathscr D}(b_0) = 0, \qquad\omega(Z_\Omega, b_0)=0, \qquad {\mathscr D}^*(b_0) = -\delta Z_\Omega,
 \end{gather*}
and
\begin{equation}
\label{eqst3}
{\mathscr D}^*(a_0)+\gamma Z_\Omega= {{\mathscr D}^*}^{p-1}(Z_\Omega).
\end{equation}
\end{itemize}
\end{itemize}
\end{Theorem}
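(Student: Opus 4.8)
The plan is to prove the theorem by direct verification, treating parts $(i)$ and $(ii)$ separately and exploiting the structural parallels with the non-restricted symplectic double extension and with the NIS double extension of \cite{BBH}. For part $(i)$, the skeleton is already familiar: the bracket on $\mathfrak{g} = \mathscr{K}\oplus\mathfrak{a}\oplus\mathscr{K}^*$ is the standard symplectic double extension bracket, where the cocycle $C(a,b) = \omega_\mathfrak{a}(({\mathscr D}+{\mathscr D}^*)(a),b)$ (shown to be a Chevalley--Eilenberg $2$-cocycle in Lemma \ref{Ccocycle}) serves as the central extension term, and $Z_\Omega$ encodes the correction needed so that $x^*$ acts by a genuine derivation. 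First I would verify the Jacobi identity on $\mathfrak{g}$: the only non-trivial cases involve $x^*$ and two elements of $\mathfrak{a}$, where one uses that ${\mathscr D}$ is a derivation together with the cocycle identity for $C$; the triple $(x^*,x^*,a)$ case reduces to the $p$-property \eqref{SConp} being unnecessary at this stage but the defining relation $[x,x^*]_\mathfrak{g}=\lambda x$ forcing the computation to close via the identity $\Omega(a,b)=\omega_\mathfrak{a}(Z_\Omega,[a,b]_\mathfrak{a})$ from \eqref{Eazero}. Then I would check that $\omega_\mathfrak{g}$ is non-degenerate (clear, since it pairs $\mathfrak{a}$ with itself via $\omega_\mathfrak{a}$ and $x$ with $x^*$) and closed, where closedness on triples from $\mathfrak{a}$ is exactly the cocycle condition on $\omega_\mathfrak{a}$, and on triples involving $x$ or $x^*$ it follows from the explicit form of the bracket and the defining property of $Z_\Omega$.

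For part $(ii)$, the heart of the matter is constructing the $[p|2p]$-map. By the superized Jacobson theorem, it suffices to define $x^{[p]_\mathfrak{g}}$, $(x^*)^{[p]_\mathfrak{g}}$, and $a^{[p]_\mathfrak{g}}$ for $a$ in a basis of $\mathfrak{a}_{\bar 0}$, consistent with $\mathrm{ad}_{y^{[p]}} = (\mathrm{ad}_y)^p$. For $a\in\mathfrak{a}_{\bar 0}$, I would set $a^{[p]_\mathfrak{g}} = a^{[p]_\mathfrak{a}} + P(a)x$ and check that $\mathrm{ad}_{a^{[p]_\mathfrak{g}}} = (\mathrm{ad}^\mathfrak{g}_a)^p$; the $\mathfrak{a}$-component is handled by restrictedness of $\mathfrak{a}$, and the $x$-component on pairs $(a^{[p]},b)$ versus iterated brackets is precisely where Lemma \ref{sigmabra} and condition \eqref{CondP2} enter — this is the computation that forces the $*$-property of $P$ and, ultimately, makes $(C,P)\in Z^2_*(\mathfrak{a};\mathbb{K})$ exactly the obstruction identified in Proposition \ref{CP2cocycle}. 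For $(x^*)^{[p]_\mathfrak{g}} = \gamma x^* + a_0 + \tilde\lambda x$, I would compute $(\mathrm{ad}^\mathfrak{g}_{x^*})^p$ acting on $\mathfrak{a}$ and on $x$: on $\mathfrak{a}$ this produces ${\mathscr D}^p = \gamma{\mathscr D} + \mathrm{ad}_{a_0}$ (the $p$-property) plus lower-order terms in the $x$-direction built from ${\mathscr D}^*$ and $Z_\Omega$, which is what pins down equation \eqref{eqst1} (and its $\lambda=0$ analogues \eqref{eqst2}, \eqref{eqst3}); on $x$ it produces the relation between $\tilde\lambda$, $\gamma$, $\lambda$, and $\omega(Z_\Omega,a_0)$. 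Similarly $x^{[p]_\mathfrak{g}} = b_0 + \sigma x + \delta x^*$ is determined by $(\mathrm{ad}^\mathfrak{g}_x)^p$, which since $\mathrm{ad}^\mathfrak{g}_x$ has image in $\mathscr{K}x$ is essentially nilpotent of low order, yielding $b_0\in\mathfrak{z}(\mathfrak{a})$, ${\mathscr D}(b_0)=0$, and the conditions on $\sigma,\delta$; the trichotomy $(a)$–$(c)$ arises from whether $\lambda$ vanishes and, when it does, whether ${\mathscr D}$ is the inner derivation $-\delta^{-1}\mathrm{ad}_{b_0}$, which is the only freedom compatible with $[x,x^*]_\mathfrak{g}=0$.

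The main obstacle will be the bookkeeping in the $(\mathrm{ad}^\mathfrak{g}_{x^*})^p$ computation and its interaction with the $x$-direction: one must expand $(\mathrm{ad}^\mathfrak{g}_{x^*})^p(a)$ carefully, noting that $\mathrm{ad}^\mathfrak{g}_{x^*}(a) = {\mathscr D}(a) + \omega_\mathfrak{a}(Z_\Omega,a)x$ has a central tail, so iterating it mixes powers of ${\mathscr D}$ with powers of ${\mathscr D}^*$ (through the pairing, since $\omega_\mathfrak{a}(Z_\Omega,{\mathscr D}(a)) = \pm\omega_\mathfrak{a}({\mathscr D}^*(Z_\Omega),a)$) together with the contribution of $[x,x^*]_\mathfrak{g}=\lambda x$ feeding back in. Tracking the resulting geometric-series-type sum $\sum (-1)^{p-1-i}\lambda^{p-1-i}{{\mathscr D}^*}^i(Z_\Omega)$ and matching it against $\mathscr{D}^*(a_0)$ requires using the adjoint relation and the $p$-property in tandem, and the case split by $\lambda$ changes whether this sum collapses (to \eqref{eqst1}) or telescopes differently (to \eqref{eqst2}/\eqref{eqst3}). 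Once these identities are established, the consistency check that the proposed $[p|2p]$-map is well-defined — i.e., that it does not depend on basis choices beyond what Jacobson's theorem guarantees, and that $\mathscr{D}(a_0)=0$ etc.\ are actually forced — is routine but must be stated. I would close by noting that the restrictedness of the $\mathfrak{a}$-module $\mathbb{K}$ (trivial module) is what kills the inner sum $\sum_{k=1}^{p-2}$ in the $*$-property verification, exactly as in the proof of Lemma \ref{*property}, so no new module-theoretic input is needed.
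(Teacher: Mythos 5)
Your plan matches the paper's proof in essence: part (i) is the known (non-restricted) symplectic double extension (the paper simply cites \cite{BM} for it, whereas you re-verify it directly), and part (ii) is proved exactly as you describe, by invoking the superized Jacobson theorem and checking $\mathrm{ad}_{y^{[p]_\mathfrak{g}}}=(\mathrm{ad}^\mathfrak{g}_y)^p$ on $x$, $x^*$ and elements of $\mathfrak{a}$, with $(C,P)\in Z^2_*$ entering through Proposition~\ref{CP2cocycle}, $(\Omega,T)\in B^2_*$ through the $\mathrm{ad}_{a^{[p]}}(x^*)$ check, and the $p$-property together with \eqref{eqst1}--\eqref{eqst3} through the $(\mathrm{ad}_{x^*})^p$ and $(\mathrm{ad}_x)^p$ computations, yielding the same trichotomy in $\lambda$ and $\delta$. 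Aside from minor misattributions (the identity $\Omega(a,b)=\omega_\mathfrak{a}(Z_\Omega,[a,b]_\mathfrak{a})$ is used in the Jacobi triple $(x^*,a,b)$ rather than $(x^*,x^*,a)$, and in this direction the conditions in (a)--(c) are hypotheses to be used, not forced), the proposal is correct and follows the same route.
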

The orthosymplectic quasi-Frobenius Lie superalgebra $(\mathfrak{g}, \omega_\mathfrak{g})$ will be called the {\it symplectic double extension} by means of a 1-dimensional space of the orthosymplectic quasi-Frobenius Lie superalgebra $(\mathfrak{a}, \omega_\mathfrak{a})$. In the particular case where $x$ is central in $\mathfrak{g}$, namely $\lambda=0$, then the double extension was called {\it classical} in \cite{BaBe, DM1}.

\begin{proof} The proof of part (i) is in \cite{BM}, so we omit it. Let us prove part (ii), using Jacobson's theorem. We have
\begin{align*}
\mathrm{ad}_{(x^*)^{[p]_\mathfrak{g}}}^\mathfrak{g}(x)-\bigl(\mathrm{ad}_{x^*}^\mathfrak{g}\bigr)^p(x) &= \big[a_0+\gamma x^*+\tilde \lambda x,x\big]_\mathfrak{g}-\bigl(\mathrm{ad}_{x^*}^\mathfrak{g}\bigr)^{p-1}(-\lambda x)\\
 &= -\lambda \gamma x-(-1)^p \lambda^px=0.
\end{align*}
Additionally,
\begin{align*}
\bigl(\mathrm{ad}_{(x^*)^{[p]_\mathfrak{g}}}^\mathfrak{g}\bigr)(x^*)-\bigl(\mathrm{ad}_{x^*}^\mathfrak{g}\bigr)^p(x^*) &= \big[a_0+\gamma x^*+\tilde \lambda x,x^*\big]_\mathfrak{g}-(\mathrm{ad}_{x^*}^\mathfrak{g})^{p-1}\bigl([x^*,x^*]_\mathfrak{g}\bigr)\\
 &= -(\mathscr{D}(a_0)+\omega_\mathfrak{a}(Z_\Omega,a_0)xx)+\tilde \lambda \lambda x\\
 &= -\mathscr{D}(a_0)+\bigl(\tilde \lambda \lambda-\omega_\mathfrak{a}(Z_\Omega,a_0)\bigr)x\\
 &=0.
\end{align*}
Moreover,
\begin{align*}
\mathrm{ad}_{(x^*)^{[p]_\mathfrak{g}}}^\mathfrak{g}(a)-\bigl(\mathrm{ad}_{x^*}^\mathfrak{g}\bigr)^p(a) ={}& \big[a_0+\gamma x^*+\tilde \lambda x,a\big]_\mathfrak{g}-\bigl(\mathrm{ad}_{x^*}^\mathfrak{g}\bigr)^{p-1}\bigl(\big[x^*,a\big]_\mathfrak{g}\bigr)\\
={}& [a_0,a]_\mathfrak{g}+\gamma\big[ x^*,a\big]_\mathfrak{a}+-\bigl(\mathrm{ad}_{x^*}^\mathfrak{g}\bigr)^{p-1}\bigl(\mathscr{D}(a)+\omega_\mathfrak{a}(Z_\Omega,a)\bigr)\\
={}& [a_0,a]_\mathfrak{a}+\omega_\mathfrak{a}\bigl(\bigl({\mathscr D}+{\mathscr D}^*\bigr)(a_0),a\bigr)x+\gamma\bigl({\mathscr D} (a)+ \omega_\mathfrak{a}(a_0,a)x\bigr)\\
&- \bigg(\mathscr{D}^p(a)- \omega_\mathfrak{a}\bigg( Z_\Omega, \sum_{i=0}^{p-1} (-1)^{p-1-i}\lambda^{p-1-i} {\mathscr D}^i(a)\bigg)\bigg)\\
={}&0,
\end{align*}
as ${\mathscr D}$ satisfies the $p$-property and \eqref{eqst1}, or \eqref{eqst2} or \eqref{eqst3}.

On the other hand,
\[
\mathrm{ad}_{a^{[p]_\mathfrak{g}}}^\mathfrak{g}(x)-\big(\mathrm{ad}_{a}^\mathfrak{g}\big)^p(x) = 0.
\]
Additionally,
\begin{align*}
\mathrm{ad}_{a^{[p]_\mathfrak{g}}}^\mathfrak{g}(x^*)-\bigl(\mathrm{ad}_{a}^\mathfrak{g}\bigr)^p(x^*) ={}& \big[a^{[p]_\mathfrak{a}}+P(a)x, x^*\big]_\mathfrak{g}-(\mathrm{ad}_{a}^\mathfrak{g})^{p-1}\bigl(-\mathscr{D}(a)-\omega_\mathfrak{a}(Z_\Omega,a)x\bigr)\\
={}& -{\mathscr D}\bigl(a^{[p]_\mathfrak{a}}\bigr)-\omega_\mathfrak{a}\bigl(Z_\Omega, a^{[p]_\mathfrak{a}}\bigr)x+\lambda P(a)x\\
&{}-\bigl(\mathrm{ad}_a^\mathfrak{a}\bigr)^p\circ {\mathscr D}(a)-\omega_\mathfrak{a}\bigl(\bigl({\mathscr D}+{\mathscr D}^*\bigr)(a), \bigl(\mathrm{ad}_a^\mathfrak{a}\bigr)^{p-2}\circ {\mathscr D}(a)\bigr)x\\
={}&0,
\end{align*}
as ${\mathscr D}$ is a restricted derivation, and $(\Omega, T)$ is a coboundary.

Moreover,
\begin{align*}
\mathrm{ad}_{a^{[p]_\mathfrak{g}}}^\mathfrak{g}(b)-(\mathrm{ad}_{a}^\mathfrak{g})^p(b) ={}& \big[a^{[p]_\mathfrak{a}}+P(a)x,b\big]_\mathfrak{g}-\bigl(\mathrm{ad}_a^\mathfrak{g}\bigr)^{p-1}([a,b]_\mathfrak{g})\\
={}& \big[a^{[p]_\mathfrak{a}},b\big]_\mathfrak{a}+\omega_\mathfrak{a}\bigl(({\mathscr D}+{\mathscr D}^*)\bigl(a^{[p]_\mathfrak{a}}\bigr), b\bigr)x- \bigl(\mathrm{ad}_a^\mathfrak{a}\bigr)^{p-1}(b)\\
&-\omega_\mathfrak{a}\bigl(({\mathscr D}+{\mathscr D}^*)(a), \bigl(\mathrm{ad}_a^\mathfrak{a}\bigr)^{p-1}(b)\bigr)x\\
={}&0,
\end{align*}
as the Lie superalgebra $\mathfrak{a}$ is restricted, $(C,P)$ is a 2-cocycle and hence Proposition~\ref{CP2cocycle} can be applied.

Finally, using the same techniques as before we can show that $\mathrm{ad}_{x^{[p]_\mathfrak{g}}}^\mathfrak{g}=\big(\mathrm{ad}_x^\mathfrak{g}\big)^p$.
\end{proof}
\begin{Theorem}[converse of Theorem \ref{MainTh}]
\label{Rec1}
Let $(\mathfrak{g},\omega_\mathfrak{g})$ be a~restricted orthosymplectic quasi-Frobenius Lie superalgebra. Suppose there exists an even non-zero $x\in ([\mathfrak{g}, \mathfrak{g}]_\mathfrak{g})^\perp$ such that $\mathscr{K}:=\mathrm{Span}\{x\}$ is an ideal, and $\mathscr{K}^\perp$ is a $p$-ideal. Then, $(\mathfrak{g},\omega_\mathfrak{g})$ is obtained as a symplectic ${\mathscr D}_{\bar 0}$-extension by a 1-dimensional space from a~restricted orthosymplectic quasi-Frobenius Lie superalgebra $(\mathfrak{a},\omega_\mathfrak{a})$. Moreover, if $\mathfrak{z}_{\bar 0}(\mathfrak{g})\not =0$, then we can choose $x \in \mathfrak{z}_{\bar 0}(\mathfrak{g})$, so the double extension is classical.

\end{Theorem}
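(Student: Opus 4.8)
The plan is to reverse the construction of Theorem~\ref{MainTh}: starting from $(\mathfrak{g},\omega_\mathfrak{g})$ with the prescribed isotropic ideal $\mathscr{K}=\mathrm{Span}\{x\}$, produce the smaller quasi-Frobenius superalgebra $\mathfrak{a}$ as a symplectic reduction and then identify the extension data $({\mathscr D},\lambda,P,Z_\Omega)$. First I would observe that since $x\in([\mathfrak{g},\mathfrak{g}]_\mathfrak{g})^\perp$ and $\mathscr{K}$ is an ideal, the orthogonal $\mathscr{K}^\perp$ is a codimension-$1$ ideal containing $\mathscr{K}$ (non-degeneracy of $\omega_\mathfrak{g}$ forces $\dim\mathscr{K}^\perp=\dim\mathfrak{g}-1$, and the inclusion $\mathscr{K}\subset([\mathfrak{g},\mathfrak{g}])^\perp$ dualizes via the cocycle identity to $[\mathfrak{g},\mathscr{K}^\perp]\subseteq\mathscr{K}^\perp$; one also checks $\mathscr{K}\subseteq\mathscr{K}^\perp$, i.e.\ $x$ is isotropic, because $\omega_\mathfrak{g}(x,x)=0$ for $x$ even). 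Then $\mathfrak{a}:=\mathscr{K}^\perp/\mathscr{K}$ inherits a well-defined Lie superalgebra bracket, and $\omega_\mathfrak{g}$ descends to a non-degenerate closed $2$-form $\omega_\mathfrak{a}$ on $\mathfrak{a}$ (non-degeneracy is the standard symplectic-reduction fact $\big(\mathscr{K}^\perp/\mathscr{K}\big)$ carries the reduced form), so $(\mathfrak{a},\omega_\mathfrak{a})$ is quasi-Frobenius; since $x$ is even, $\omega_\mathfrak{a}$ remains even, hence orthosymplectic. The hypothesis that $\mathscr{K}^\perp$ is a $p$-ideal is exactly what guarantees the $[p|2p]$-structure of $\mathfrak{g}$ descends to $\mathscr{K}^\perp/\mathscr{K}$, so $(\mathfrak{a},\omega_\mathfrak{a})$ is a \emph{restricted} orthosymplectic quasi-Frobenius Lie superalgebra.

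Next I would choose a splitting $\mathfrak{g}=\mathscr{K}\oplus\mathfrak{a}\oplus\mathscr{K}^*$ compatible with $\omega_\mathfrak{g}$: pick $x^*\in\mathfrak{g}$ with $\omega_\mathfrak{g}(x^*,x)=1$ and $x^*$ orthogonal to a chosen lift of $\mathfrak{a}$, adjusting $x^*$ by an element of $\mathfrak{a}$ and a multiple of $x$ to kill $\omega_\mathfrak{g}(x^*,x^*)$ (possible since $x$ is even, so there is no obstruction as in the odd case). Reading off the brackets in this basis defines the data: $[x^*,a]_\mathfrak{g}$ has an $\mathfrak{a}$-component, which defines a linear map ${\mathscr D}\colon\mathfrak{a}\to\mathfrak{a}$, and the Jacobi identity for $\mathfrak{g}$ forces ${\mathscr D}$ to be a derivation; likewise $[x,x^*]_\mathfrak{g}\in\mathscr{K}$ defines $\lambda$, and the $x$-components of $[a,b]_\mathfrak{g}$ and $[x^*,a]_\mathfrak{g}$ are controlled by $\omega_\mathfrak{a}$ and a vector $Z_\Omega$ via the invariance/closedness of $\omega_\mathfrak{g}$ — this is where one recovers $\omega_\mathfrak{a}(({\mathscr D}+{\mathscr D}^*)(a),b)$ as the $x$-coefficient of $[a,b]_\mathfrak{g}$, using that $\omega_\mathfrak{g}$ is a cocycle. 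Finally the $[p|2p]$-map on $\mathfrak{g}$, restricted to $\mathfrak{a}\oplus\mathscr{K}$, produces the map $P\colon\mathfrak{a}_{\bar 0}\to\mathbb{K}$ and the scalars $\gamma,\sigma,\delta,\tilde\lambda$ and vectors $a_0,b_0$; the restrictedness axioms $\mathrm{ad}_{a^{[p]}}=(\mathrm{ad}_a)^p$ on $\mathfrak{g}$ then translate termwise into: ${\mathscr D}$ restricted, ${\mathscr D}$ has the $p$-property \eqref{SConp}, $(C,P)\in Z^2_*(\mathfrak{a};\mathbb{K})$, and $(\Omega,T)\in B^2_*(\mathfrak{a};\mathbb{K})$ with the coboundary witnessed by $\chi=\omega_\mathfrak{a}(Z_\Omega,-)$ — precisely the hypotheses needed to invoke Theorem~\ref{MainTh}, which then exhibits $\mathfrak{g}$ as the symplectic ${\mathscr D}_{\bar 0}$-extension of $(\mathfrak{a},\omega_\mathfrak{a})$.

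For the last sentence, if $\mathfrak{z}_{\bar 0}(\mathfrak{g})\neq0$, pick $0\neq x\in\mathfrak{z}_{\bar 0}(\mathfrak{g})$; then $\mathscr{K}=\mathrm{Span}\{x\}$ is automatically an ideal, and $x\in[\mathfrak{g},\mathfrak{g}]^\perp$ follows from $\omega_\mathfrak{g}$-invariance together with centrality ($\omega_\mathfrak{g}(x,[a,b])=\pm\omega_\mathfrak{g}([x,a],b)=0$). Centrality forces $[x,x^*]_\mathfrak{g}=0$, i.e.\ $\lambda=0$, so the extension is classical in the sense of \cite{BaBe,DM1}; one still needs $\mathscr{K}^\perp$ to be a $p$-ideal, which I expect to arrange by noting that $x$ central and isotropic forces $x^{[p]_\mathfrak{g}}$ to lie in $\mathscr{K}^\perp$ and that the $p$-map preserves the codimension-$1$ ideal $\mathscr{K}^\perp$ (using $\mathrm{ad}_{a^{[p]}}=(\mathrm{ad}_a)^p$ and that $\mathscr{K}^\perp$ is an ideal). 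The main obstacle I anticipate is the bookkeeping in the last step: verifying that each of the three cases $(a),(b),(c)$ of Theorem~\ref{MainTh}(ii) is genuinely reached — in particular disentangling whether ${\mathscr D}$ is inner of the form $-\delta^{-1}\mathrm{ad}_{b_0}$ — and checking that the $*$-property of $P$ with respect to $C$ (Lemma~\ref{*property}) and of $T$ with respect to $\Omega$ are automatic consequences of the restricted structure on $\mathfrak{g}$ rather than extra hypotheses; this requires carefully matching the nonlinear $[p]$-map expansion on $\mathfrak{g}$ against the definitions of $\sigma_i^\mathfrak{a}$ and $\mathrm{Ind}^2$.
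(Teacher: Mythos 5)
Your overall route coincides with the paper's: split $\mathfrak{g}=\mathscr{K}\oplus\mathfrak{a}\oplus\mathscr{K}^*$ with $\omega_\mathfrak{g}(x^*,x)=1$ and $\mathfrak{a}=(\mathscr{K}+\mathscr{K}^*)^\perp$, recover the non-restricted double-extension data (which the paper quotes from \cite{BM}), and then translate the $[p|2p]$-structure of $\mathfrak{g}$ into the hypotheses of Theorem~\ref{MainTh}. The one step that is wrong as written is the claim that, because $\mathscr{K}^\perp$ is a $p$-ideal, the $[p|2p]$-structure of $\mathfrak{g}$ ``descends to $\mathscr{K}^\perp/\mathscr{K}$''. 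The naively induced map on that quotient is well defined only if $x^{[p]_\mathfrak{g}}\in\mathscr{K}$: since $\mathscr{K}$ is an ideal, every $s_i(a,cx)$ lies in $\mathscr{K}$, so $(a+cx)^{[p]_\mathfrak{g}}\equiv a^{[p]_\mathfrak{g}}+c^p x^{[p]_\mathfrak{g}} \pmod{\mathscr{K}}$, and nothing in the hypotheses forces $x^{[p]_\mathfrak{g}}\in\mathscr{K}$ --- in Theorem~\ref{MainTh}(ii) one has $x^{[p]_\mathfrak{g}}=b_0+\sigma x+\delta x^*$ with $b_0$ possibly nonzero (e.g., $b_0=x_1$ in the $K^{2,3}$, $p=3$ example of Section~\ref{sec5}), and such $\mathfrak{g}$ satisfy the hypotheses of Theorem~\ref{Rec1}. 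What the $p$-ideal hypothesis actually buys is only that $a^{[p]_\mathfrak{g}}$ has no $x^*$-component for $a\in\mathfrak{a}_{\bar 0}$, so that one can write $a^{[p]_\mathfrak{g}}=s(a)+P(a)x$ in the fixed complement $\mathfrak{a}$; one must then verify by hand that $s$ is a $[p|2p]$-map (semilinearity, additivity via the identity $s_i^\mathfrak{g}(a,b)=s_i^{\mathfrak{a}}(a,b)+\sigma_i^{\mathfrak{a}}(a,b)x$, and $\mathrm{ad}^{\mathfrak{a}}_{s(a)}=(\mathrm{ad}^{\mathfrak{a}}_a)^p$), and that $P$ and $T$ have the $*$-properties, which is exactly what the paper does using Lemma~\ref{1lemma}, Lemma~\ref{*property} and Proposition~\ref{CP2cocycle}. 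Your second paragraph de facto works in the splitting, so the flaw is repairable along the lines you yourself anticipate, but as written the restrictedness of $\mathfrak{a}$ --- the only genuinely new content of the converse beyond \cite{BM} --- is justified by an argument that fails.

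A secondary point: in the ``moreover'' part you hope to deduce that $\mathscr{K}^\perp$ is a $p$-ideal from centrality of $x$. This cannot work in general: case $(c)$ of Theorem~\ref{MainTh}(ii) produces a restricted $\mathfrak{g}$ with $x$ central ($\lambda=0$) but $\delta\neq0$, so $\omega_\mathfrak{g}\bigl(x^{[p]_\mathfrak{g}},x\bigr)=\delta\neq0$ and $\mathscr{K}^\perp$ is not closed under the $p$-map. The clause should be read with the same hypotheses in force: choosing $x\in\mathfrak{z}_{\bar 0}(\mathfrak{g})$ only forces $\lambda=0$, i.e., the extension is classical. The remaining bookkeeping you flag (the decomposition of $(x^*)^{[p]_\mathfrak{g}}$ and $x^{[p]_\mathfrak{g}}$, the case analysis in $\lambda$ and $\delta$, and matching with $\mathrm{Ind}^2$) is indeed exactly the computation the paper carries out.
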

\begin{proof}

It has been proved in \cite{BM} that the space $\mathscr{K}^\perp$ is an ideal in $(\mathfrak{g},\omega_\mathfrak{g})$, and that there exists $x^* \in \mathfrak{g}_{\bar 0}$ \big(since $\bigl(\mathscr{K}^\perp\bigr)_{\bar 1}=\mathfrak{g}_{\bar 1}$\big) such that
\[
\mathfrak{g}=\mathscr{K}^\perp\oplus \mathscr{K}^*, \qquad \text{where} \ \mathscr{K}^*:=\mathrm{Span}\{x^*\}.
\]
We can normalize $x^*$ so that $\omega_\mathfrak{g}(x^*,x)=1$.

Let us define $\mathfrak{a}:=(\mathscr{K} +\mathscr{K}^*)^\perp$. We then have a decomposition $\mathfrak{g}=\mathscr{K} \oplus \mathfrak{a} \oplus \mathscr{K}^*$.

Let us define an~orthosymplectic form on $\mathfrak{a}$ by setting
\[
\omega_\mathfrak{a}={\omega_\mathfrak{g}}\vert_{\mathfrak{a}\times \mathfrak{a}}.
\]
It has been proved in \cite{BM} that the vector space $\mathfrak{a}$ can be endowed with a Lie superalgebra structure, and there exists an~orthosymplectic structure on $\mathfrak{a}$ for which $\mathfrak{g}$ is its symplectic double extension by means of the form $\omega_\mathfrak{a}$, a derivation $\mathscr D$ and $Z_\Omega\in \mathfrak{a}$ as in part (i) of Theorem \ref{MainTh}. In particular, it has been shown that the map
\[
\Omega\colon\ \mathfrak{a}\wedge \mathfrak{a} \rightarrow \mathbb K, \qquad (a,b) \mapsto \omega_\mathfrak{a}( {\mathscr D} \circ {\mathscr D}(a) + 2 {\mathscr D}^* \circ {\mathscr D}(a)+ {\mathscr D}^* \circ {\mathscr D}^*(a) +\lambda ( {\mathscr D} + {\mathscr D} ^* )(a) , b)
\]
is in $B^2_{\text{CE}}(\mathfrak{a}; \mathbb K)$, which implies that
\begin{equation}\label{Omegatr}
\Omega(a,b)=\omega_\mathfrak{a}(Z_\Omega, [a,b]_\mathfrak{a}) \qquad \text{for some} \ Z_\Omega\in \mathfrak{a}.
\end{equation}
Moreover, the map
\[
C\colon\ \mathfrak{a}\wedge \mathfrak{a} \rightarrow \mathbb K, \qquad (a,b) \mapsto \omega_\mathfrak{a}(( {\mathscr D} + {\mathscr D}^*)(a) , b)
\]
is in $Z_{\mathrm{CE}}^2(\mathfrak{a}; \mathbb K)$ by Lemma \ref{Ccocycle}.

It remains to show that there is a $[p|2p]$-map on $\mathfrak{a}$. Since $\mathfrak{a} \subset \mathscr{K}^\perp$ and ${\mathscr K}^\perp$ is a $p$-ideal, then
\[
a^{[p]_\mathfrak{g}}\in \mathscr{K}^\perp=\mathscr{K} \oplus \mathfrak{a} \qquad \text{ for any} \ a \in \mathfrak{a}_{\bar 0}.
\]
It follows that
\begin{equation*}
a^{[p]_\mathfrak{g}}=s(a)+P(a)x.
\end{equation*}
We will show that the map
\[
s\colon \ \mathfrak{a}_{\bar 0} \rightarrow \mathfrak{a}_{\bar 0}, \qquad a\mapsto s(a)
\]
is a $[p|2p]$-mapping on $\mathfrak{a}$.
Since $(\delta a)^{[p]_\mathfrak{g}}=\delta ^p\bigl(a^{[p]_\mathfrak{g}}\bigr)$, for all $\delta \in \mathbb K$ and for all $a\in \mathfrak{a}_{\bar 0} $, it follows that
\begin{gather}
s(\delta a) = \delta^p s(a),\label{rescond1rec}\\
P(\delta a) = \delta^p P(a).\label{Pcond1rec}
\end{gather}
Besides, for all $a\in \mathfrak{a}_{\bar 0}$ and for all $b\in \mathfrak{a}$, we have
\begin{align*}
\begin{split}
0&=\big[a^{[p]_\mathfrak{g}},b\big]_\mathfrak{g}-\bigl(\mathrm{ad}_a^\mathfrak{g}\bigr)^{p}(b)\\
&=[s(a),b]_\mathfrak{a}+\omega_\mathfrak{a}(({\mathscr D} + {\mathscr D}^*)(s(a)), b) x -\bigl(\mathrm{ad}_a^\mathfrak{a}\bigr)^p(b)- \omega_\mathfrak{a}\bigl(({\mathscr D} + {\mathscr D}^*)(a),\bigl(\mathrm{ad}_a^\mathfrak{a}\bigr)^{p-1}(b)\bigr) x.
\end{split}
\end{align*}
Therefore,
\begin{gather}
\omega_\mathfrak{a}(({\mathscr D}+{\mathscr D}^*)(s(a)), b) = \omega_\mathfrak{a}\bigl(({\mathscr D}+{\mathscr D}^*)(a),
\bigl(\mathrm{ad}_a^\mathfrak{a}\bigr)^{p-1}(b)\bigr) ,\label{cocycondrec}\\
[s(a),b]_\mathfrak{a} = \bigl(\mathrm{ad}_a^\mathfrak{a}\bigr)^p(b). \label{rescondrec}
\end{gather}
Now, since
\begin{align*}
 \sum_{1\leq i \leq p-1} is_i^\mathfrak{g}(a,b) \mu^{i-1}&=(\mathrm{ad}_{\mu a+b}^\mathfrak{g})^{p-1}(a)\\
&=(\mathrm{ad}_{\mu a+b}^\mathfrak{a})^{p-1}(a)+\omega_\mathfrak{a}\bigl(({\mathscr D}+{\mathscr D}^*)(\mu a+b),(\mathrm{ad}_{\mu a+b}^\mathfrak{a})^{p-2}(a) \bigr)x,
\end{align*}
it follows that
\[
s^\mathfrak{g}_i(a,b)=s_i^{\mathfrak{a}}(a,b)+\sigma_i^{\mathfrak{a}}(a,b)x.
\]
Moreover,
\begin{align*}
0={}&(a+b)^{[p]_\mathfrak{g}}-a^{[p]_\mathfrak{g}}-b^{[p]_\mathfrak{g}}- \sum_{1\leq i \leq p-1}s_i^\mathfrak{g}(a,b)\\
={}&\bigg( P(a+b)-P(a)-P(b)- \sum_{1\leq i \leq p-1}\sigma_i^{\mathfrak{a}}(a,b) \bigg) x\\
&{}+s(a+b)-s(a)-s(b)- \sum_{1\leq i \leq p-1}s_i^{\mathfrak{a}}(a,b).
\end{align*}
Consequently,
\begin{gather}
s(a+b)=s(a)+s(b)+ \sum_{1\leq i \leq p-1}s_i^{\mathfrak{a}}(a,b), \label{conres2rec}\\
P(a+b)-P(a)-P(b)= \sum_{1\leq i \leq p-1}\sigma_i^{\mathfrak{a}}(a,b).\nonumber 
 \end{gather}
Equations \eqref{rescond1rec}, \eqref{rescondrec}, \eqref{conres2rec} imply that $s$ defines a $[p|2p]$-map on $\mathfrak{a}$.

Now, since $\big[a^{[p]_\mathfrak{g}},x^*\big]_\mathfrak{g}=\bigl(\mathrm{ad}_a^\mathfrak{g}\bigr)^p(x^*)$, then
\begin{gather*}
P(a) \lambda x-\mathscr D (s(a)) - \omega_\mathfrak{a} (Z_\Omega, s(a))x \\
\qquad{} = - \bigl(\mathrm{ad}_a^\mathfrak{a}\bigr)^{p-1}\circ \mathscr D(a) - \omega_\mathfrak{a} (({\mathscr D}+{\mathscr D}^*)(a), \bigl(\mathrm{ad}_a^\mathfrak{a}\bigr)^{p-2} \circ {\mathscr D}(a))x.
\end{gather*}
It follows that
\begin{gather}
\mathscr D (s(a)) = \bigl(\mathrm{ad}_a^\mathfrak{a}\bigr)^{p-1}\circ \mathscr D(a),\label{resderrec} \\
P(a) \lambda = \omega_\mathfrak{a} (Z_\Omega, s(a)) - \omega_\mathfrak{a} (({\mathscr D}+{\mathscr D}^*)(a),\bigl(\mathrm{ad}_a^\mathfrak{a}\bigr)^{p-2} \circ {\mathscr D}(a)).\label{Pcbound}
\end{gather}
Equation~\eqref{resderrec} implies that ${\mathscr D}$ is a restricted derivation of $\mathfrak{a}$ (relative to the $[p|2p]$-map $s$). Let us defined the map
\[
T\colon\ a\rightarrow \mathbb K, \qquad a \mapsto \omega_\mathfrak{a} (({\mathscr D}+{\mathscr D}^*)(a),\bigl(\mathrm{ad}_a^\mathfrak{a}\bigr)^{p-2} \circ {\mathscr D}(a)) + \lambda P(a).
\]
We need the following lemma.
\begin{Lemma}
\[
(C,P)\in Z^2_{*} (\mathfrak{a}, \mathbb K)\qquad \text{ and }\qquad(\Omega,T)\in B^2_{*} (\mathfrak{a}, \mathbb K).
\]
\end{Lemma}
\begin{proof}
Equations \eqref{Pcond1rec} and \eqref{conres2rec} and Lemma \ref{*property}, imply that $P$ satisfies the $*$-property with respect to the cocycle $C$. Moreover, \eqref{cocycondrec} and Proposition~\ref{CP2cocycle} imply that $(C, P)\in Z^2_* (\mathfrak{a}; \mathbb K)$.

We have already showed in \cite{BM} that $\Omega \in B^2_{\text{CE}}(\mathfrak{a}; \mathbb K)$. Let us show that the
map $T$ satisfies the $*$-property with respect to $\Omega$. Indeed, for all $\delta \in \mathbb K$ and for all $a\in \mathfrak{a}_{\bar 0}$, we have
\begin{align*}
T(\delta a) & = \omega_\mathfrak{a} (({\mathscr D}+{\mathscr D}^*)(\delta a),(\mathrm{ad}_{\delta a}^\mathfrak{a})^{p-2} \circ {\mathscr D}(\delta a)) + \lambda P(\delta a) \\
 & = \delta^p \omega (({\mathscr D}+{\mathscr D}^*)(a),(\mathrm{ad}_{ a}^\mathfrak{a})^{p-2} \circ {\mathscr D}(a)) + \lambda \delta^p P(a) \\
 & =\delta^p T(a).
\end{align*}
Moreover, for all $a,b\in \mathfrak{a}_{\bar 0}$, we have
\begin{gather*}
T(a+b) \overset{\text{(by~\eqref{Pcbound})}}{=} \omega_\mathfrak{a}\bigl(Z_\Omega, (a+b)^{[p]_\mathfrak{a}}\bigr)\\
\qquad{}\overset{\text{(by definition) }}{=} \omega_\mathfrak{a}\bigl(Z_\Omega, a^{[p]_\mathfrak{a}}\bigr) + \omega_\mathfrak{a}\bigl(Z_\Omega, b^{[p]_\mathfrak{a}}\bigr)+ \omega_\mathfrak{a}\bigg(Z_\Omega, \sum_{1\leq i \leq p-1}s_i(a,b) \bigg) \\
\qquad{} \overset{\text{(by Lemma \ref{1lemma})}}{=}{} T(a)+T(b)
 + \sum_{\underset{x_{p-1}=b,~x_p=a}{x_k=a \text{ or } b}}\frac{1}{\pi(a)} \omega_\mathfrak{a}(Z_\Omega, [x_1,[x_2,[\dots ,[x_{p-1},x_p]_\mathfrak{a}\dots ]_\mathfrak{a}]_\mathfrak{a}]_\mathfrak{a}) \\
\qquad{}\overset{\text{(by~\eqref{Omegatr})}}{=} T(a)+T(b) + \sum_{\underset{x_{p-1}=b,~x_p=a}{x_k=a \text{ or } b}}\frac{1}{\pi(a)} \Omega ( x_1,[x_2,[\dots ,[x_{p-1},x_p]_\mathfrak{a}\dots ]_\mathfrak{a}]_\mathfrak{a}) \\
\qquad{} = T(a)+T(b) + \sum_{\underset{x_{p-1}=b,~x_p=a}{x_k=a \text{ or } b}}\frac{1}{\pi(a)} \Omega ( [[[x_p,x_{p-1}]_\mathfrak{a},x_{p-2}]_\mathfrak{a},\dots ,x_2]_\mathfrak{a}, x_1).
\end{gather*}
Now, equations~\eqref{Omegatr}, \eqref{Pcbound} imply that $(\Omega, T)\in B^2_*(\mathfrak{a}; \mathbb Z)$.
\end{proof}

Now, suppose that
\begin{gather*}
(x^*)^{[p]_\mathfrak{g}}=a_0+\beta x+\gamma x^*,\qquad \text{where}\ a_0\in \mathfrak{a} \ \text{and}\ \beta, \gamma \in \mathbb{K}.
\end{gather*}
We have
\[
0=\big[(x^*)^{[p]},x\big]_\mathfrak{g}-\bigl(\mathrm{ad}_{x^*}^\mathfrak{g}\bigr)^p(x)= -\gamma \lambda x+ \lambda^px.
\]
Therefore, $\gamma \lambda= \lambda^p$. Additionally,
\[
0=\big[(x^*)^{[p]},x^*\big]_\mathfrak{g}-\bigl(\mathrm{ad}_{x^*}^\mathfrak{g}\bigr)^p(x^*)= \beta \lambda x- ({\mathscr D}(a_0)+\omega_\mathfrak{a} (Z_\Omega, a_0)x).
\]
Therefore, ${\mathscr D}(a_0)=0$ and $\lambda \beta= \omega_\mathfrak{a} (Z_\Omega, a_0)$.

For all $a\in \mathfrak{a}$, we have
\begin{align*}
0={}&\big[(x^*)^{[p]_\mathfrak{g}},a\big]_\mathfrak{g}-\bigl(\mathrm{ad}_{x^*}^\mathfrak{g}\bigr)^p(a)\\
 ={}& [a_0,a]+ \omega_\mathfrak{a}(({\mathscr D}+{\mathscr D}^*)(a_0),a)x+\gamma {\mathscr D}(a) +\gamma \omega(Z_\Omega, a)x-{\mathscr D}^{p}(a) \\
 &- \sum_{1\leq i \leq p-1}\omega_\mathfrak{a}\bigl((-\lambda)^{p-1-i} {\mathscr D^*}^{i} (Z_\Omega),a\bigr)x - (-\lambda)^{p-1} \omega(Z_\Omega,a)x.
\end{align*}
It follows that ${\mathscr D}^p=\gamma {\mathscr D} +\mathrm{ad}_{a_0}$ and \smash{${\mathscr D}^*(a_0)+\gamma Z_\Omega= \sum_{0\leq i \leq p-1}(-\lambda)^{p-1-i} {\mathscr D^*}^{i} (Z_\Omega)$} ($\omega_\mathfrak{g}$ is non-degenerate).

Suppose now that
\[
x^{[p]_\mathfrak{g}}=b_0+\sigma x+\delta x^*,\qquad \text{where}\ \sigma,\delta\in \mathbb{K}\ \text{and}\ b_0\in \mathfrak{a}_{\bar 0}.
\]
We have
\[
0=\big[x^{[p]},x\big]_\mathfrak{g}-\bigl(\mathrm{ad}_{x}^\mathfrak{g}\bigr)^p(x)=- \delta \lambda x.
\]
Therefore, $\lambda \delta =0$. Additionally,
\[
0=\big[x^{[p]},x^*\big]_\mathfrak{g}-\bigl(\mathrm{ad}_{x}^\mathfrak{g}\bigr)^p(x^*)= \sigma \lambda x- ({\mathscr D}(b_0)+\omega_\mathfrak{a} (Z_\Omega, b_0)x).
\]
Therefore, ${\mathscr D}(b_0)=0$ and $\lambda \sigma-\omega(Z_\Omega,b_0)=0$.

Now, for any $b\in \mathfrak{a}$, we have
\[
0=\big[x^{[p]_\mathfrak{g}},b\big]_\mathfrak{g}-\bigl(\mathrm{ad}_x^\mathfrak{g}\bigr)^{p}(b)=[b_0,b]_\mathfrak{a}+\omega_\mathfrak{a}(({\mathscr D}+{\mathscr D}^*)(b_0), b)x+ \delta ( {\mathscr D}(b)+ \omega_\mathfrak{a}(Z_\Omega, b)x).
\]
It follows that $\delta {\mathscr D}(b)+[b_0,b]_\mathfrak{a}=0$ and ${\mathscr D}^*(b_0)+ \delta Z_\Omega=0$ (since $\omega_\mathfrak{a}$ is non-degenerate).

\textit{The case where $\lambda\not=0$.} It follows that $\delta=0$, ${\mathscr D}^*(b_0)=0$ and $b_0\in \mathfrak{z}(\mathfrak{a})$. Therefore, $\mathfrak{g}$ can be obtained from the restricted Lie superalgebra algebra $\mathfrak{a}$ as in Theorem \ref{MainTh}.

\textit{The case where $\lambda=0$.} It follows that $\omega(Z_\Omega, b_0)=0$. Here if $\delta=0$ then $b_0\in \mathfrak{z}(\mathfrak{a})$. Therefore, $\mathfrak{g}$ can be obtained from the restricted Lie superalgebra algebra $\mathfrak{a}$ as in Theorem \ref{MainTh}. However, if $\delta\not =0$, then it follows that ${\mathscr D}=-\delta^{-1} \mathrm{ad}_{b_0}$ and ${\mathscr D}^*(b_0)=-\delta Z_\Omega$.
\end{proof}

 \subsection[D\_1-extensions]{$\boldsymbol{{\mathscr D}_{\bar 1}}$-extensions}

Let $(\mathfrak{a}, \omega_\mathfrak{a})$ be a~restricted orthosymplectic quasi-Frobenius Lie superalgebra, and let ${\mathscr D}\in \mathfrak{der}_{\bar 1}(\mathfrak{a})$ be a derivation.

Consider the following two maps
\begin{gather*}
\Omega\colon\ \mathfrak{a}\wedge \mathfrak{a} \rightarrow \mathbb{K}, \qquad (a,b) \mapsto \omega_\mathfrak{a}( {\mathscr D} \circ {\mathscr D}(a) - {\mathscr D}^* \circ {\mathscr D}^*(a) , b), \\
\Delta\colon\ \mathfrak{a}_{\bar 0} \rightarrow \mathbb{K}, \qquad a \mapsto \omega_\mathfrak{a}(({\mathscr D}+{\mathscr D}^*)(a),\mathrm{ad}_a^{p-2}({\mathscr D}(a))).
\end{gather*}
Let us suppose that $(\Omega, \Delta)$ is in $B^2_*(\mathfrak{g};\mathbb{K})$ (i.e., a 2-coboundary in the restricted cohomology). Let us write $(\Omega, \Delta)= \bigl(d^1_{\mathrm{CE}} (\chi), \mathrm{ind}^1(\chi) \bigr)$ for some $\chi\in C^1_*(\mathfrak{a}; \mathbb{K})$; namely,
 \[
\Omega(a,b)=\chi([a,b]_\mathfrak{a}) \qquad \text{ and } \qquad \Delta(a)=\chi\bigl(a^{[p]}\bigr).
\]
Since $\omega_\mathfrak{a}$ is non-degenerate, there exists $a_0\in \mathfrak{a}$ such that
\begin{equation}
\label{EOazero}
\Omega(a,b)=\omega_\mathfrak{a}(a_0,[a,b]_\mathfrak{a}), \qquad \forall a,b \in \mathfrak{a} \qquad \text{and} \qquad \Delta(a)=\omega_\mathfrak{a}\bigl(a_0,a^{[p]}\bigr),\qquad \forall a\in \mathfrak{a}_{\bar 0}.
\end{equation}
\begin{Theorem}[${\mathscr D}_{\bar 1}$-extension -- the case where $\omega$ is orthosymplectic] \label{MainThO}
Let $(\mathfrak{a}, \omega_\mathfrak{a})$ be a~restricted orthosymplectic quasi-Frobenius Lie superalgebra. Let ${\mathscr D}\in \mathfrak{der}_{\bar 1}(\mathfrak{a})$ be a~restricted derivation such that the following conditions are satisfied
\[
(\Omega,\Delta) \in B^2_*(\mathfrak{a}; \mathbb{K}),
\]
together with
\[
{\mathscr D}^2=\mathrm{ad}_{a_0} \qquad \text{ and }\qquad {\mathscr D}(a_0)=0,
\]
where $a_0$ is as in \eqref{EOazero}.
\begin{itemize}
\item[$(i)$] There exists a Lie superalgebra structure on $\mathfrak{g}:=\mathscr{K} \oplus \mathfrak{a} \oplus \mathscr{K} ^*$, where $\mathscr{K} :=\mathrm{Span}\{x\}$ for~$x$ odd, defined as follows: $($for any $a, b\in \mathfrak{a})$
\begin{gather*}
[\mathscr{K} ,\mathfrak{g}]_\mathfrak{g}=0, \qquad [a,b]_\mathfrak{g} := [a,b]_\mathfrak{a} + \bigl(\omega_\mathfrak{a}( {\mathscr D}(a), b) +(-1)^{|a|} \omega_\mathfrak{a}(a, {\mathscr D}(b))\bigr)x, \\
[x^*, x^*]_\mathfrak{g}=2 a_0, \qquad [x^*,a]_\mathfrak{g} := {\mathscr D}(a) - \omega_\mathfrak{a}(a,a_0)x.
\end{gather*}
There exists a~closed anti-symmetric orthosymplectic form $\omega_\mathfrak{g}$ on $\mathfrak{g}$ defined as follows:
\begin{gather*}
{\omega_\mathfrak{g}}\vert_{\mathfrak{a} \times \mathfrak{a}}:= \omega_\mathfrak{a}, \qquad \omega_\mathfrak{g}(\mathfrak{a},\mathscr{K} ):=\omega_\mathfrak{g}(\mathfrak{a},\mathscr{K} ^*):=0, \\ \omega_\mathfrak{g}(x^*,x):=1, \qquad
\omega_\mathfrak{g}(x,x):=\omega_\mathfrak{g}(x^*,x^*) :=0.
\end{gather*}
\item[$(ii)$] There exists a $[p|2p]$-map on the double extension $\mathfrak{g}$ of $\mathfrak{a}$ given by
\[
a^{[p]_\mathfrak{g}} = a^{[p]_\mathfrak{a}}.
\]
\end{itemize}
\end{Theorem}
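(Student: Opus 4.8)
Part~(i) is the odd‑derivation, even‑form instance of the (non‑restricted) symplectic double extension of Lie superalgebras, so I would simply quote \cite{BM}: the Jacobi identity on $\mathfrak g$ reduces to that of $\mathfrak a$ together with the $2$‑cocycle identity for the $\mathscr K$‑component of the bracket, whose most demanding instance is the triple $(x^*,x^*,a)$ and forces exactly the hypotheses ${\mathscr D}^2=\mathrm{ad}_{a_0}$ and ${\mathscr D}(a_0)=0$; and $d_{\rm CE}\omega_\mathfrak g=0$ follows from $\omega_\mathfrak a\in Z^2_{\rm CE}$ and from $C\in Z^2_{\rm CE}$ (Lemma~\ref{Ccocycle}), where, for $a$ even, $C(a,b)=\omega_\mathfrak a\bigl(({\mathscr D}+{\mathscr D}^*)(a),b\bigr)$ is precisely the $\mathscr K$‑coefficient of $[a,b]_\mathfrak g$. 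So the new content is part~(ii), which I would prove with the super version of Jacobson's theorem.

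\textbf{Reduction.} Since $x$ and $x^*$ are odd and $\omega_\mathfrak a$ is even, $\mathfrak g_{\bar 0}=\mathfrak a_{\bar 0}$; moreover the correction term $\bigl(\omega_\mathfrak a({\mathscr D}(a),b)+(-1)^{|a|}\omega_\mathfrak a(a,{\mathscr D}(b))\bigr)x$ vanishes identically for $a,b\in\mathfrak a_{\bar 0}$ (each summand pairs an odd vector with an even one), so $\mathfrak g_{\bar 0}=\mathfrak a_{\bar 0}$ as Lie algebras and $s_i^{\mathfrak g}(a,b)=s_i^{\mathfrak a}(a,b)$ on $\mathfrak a_{\bar 0}$. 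Hence the map $a\mapsto a^{[p]_\mathfrak a}$ already obeys the semilinearity and $s_i$‑additivity axioms on $\mathfrak g_{\bar 0}$, and by Jacobson's theorem it remains only to verify $(\mathrm{ad}_a^\mathfrak g)^p=\mathrm{ad}_{a^{[p]_\mathfrak a}}^\mathfrak g$ as operators on all of $\mathfrak g=\mathscr K\oplus\mathfrak a\oplus\mathscr K^*$ for every $a\in\mathfrak a_{\bar 0}$; the uniqueness clause then yields $a^{[p]_\mathfrak g}=a^{[p]_\mathfrak a}$, and consequently the induced $2p$‑map on $\mathfrak g_{\bar 1}$ (namely $u\mapsto(u^2)^{[p]}$, with $(x^*)^2=a_0$ and $x^2=0$) requires nothing further.

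\textbf{The three summands.} On $\mathscr K$ both sides vanish because $[\mathscr K,\mathfrak g]_\mathfrak g=0$. On $\mathfrak a$, using $[x,\mathfrak a]_\mathfrak g=0$ one gets by an immediate induction $(\mathrm{ad}_a^\mathfrak g)^k(b)=(\mathrm{ad}_a^\mathfrak a)^k(b)+C\bigl(a,(\mathrm{ad}_a^\mathfrak a)^{k-1}(b)\bigr)x$; at $k=p$ the $\mathfrak a$‑component is $[a^{[p]_\mathfrak a},b]_\mathfrak a$ since $\mathfrak a$ is restricted, and matching the $x$‑component with that of $\mathrm{ad}_{a^{[p]_\mathfrak a}}^\mathfrak g(b)=[a^{[p]_\mathfrak a},b]_\mathfrak a+C(a^{[p]_\mathfrak a},b)x$ amounts to $C\bigl(a,\mathrm{ad}_a^{p-1}(b)\bigr)=C(a^{[p]_\mathfrak a},b)$, i.e.\ the restricted‑cocycle condition of Proposition~\ref{CP2cocycle} for $C$. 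For $b$ even this holds trivially ($C$ is an odd form), and for $b$ odd it collapses, again by parity, to $\omega_\mathfrak a({\mathscr D}(a),\mathrm{ad}_a^{p-1}(b))=\omega_\mathfrak a({\mathscr D}(a^{[p]_\mathfrak a}),b)$, which I would derive from ${\mathscr D}$ being a restricted derivation together with the $2$‑cocycle identity for $\omega_\mathfrak a$. On $\mathscr K^*$, from $[a,x^*]_\mathfrak g=-{\mathscr D}(a)+\omega_\mathfrak a(a,a_0)x$ and iterating (the $x$‑term dies after one step), one obtains $(\mathrm{ad}_a^\mathfrak g)^p(x^*)=-(\mathrm{ad}_a^\mathfrak a)^{p-1}({\mathscr D}(a))-\Delta(a)\,x$, while $\mathrm{ad}_{a^{[p]_\mathfrak a}}^\mathfrak g(x^*)=-{\mathscr D}(a^{[p]_\mathfrak a})+\omega_\mathfrak a(a^{[p]_\mathfrak a},a_0)\,x$; the $\mathfrak a$‑parts coincide because ${\mathscr D}$ is restricted, and the $x$‑parts coincide because $(\Omega,\Delta)\in B^2_*(\mathfrak a;\mathbb K)$ gives, through non‑degeneracy of $\omega_\mathfrak a$, the relation $\Delta(a)=\omega_\mathfrak a(a_0,a^{[p]_\mathfrak a})=-\omega_\mathfrak a(a^{[p]_\mathfrak a},a_0)$ from \eqref{EOazero}.

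\textbf{Main obstacle.} The genuinely computational step is the $\mathscr K^*$‑case: unwinding $(\mathrm{ad}_a^\mathfrak g)^p(x^*)$ through the non‑abelian corrections while keeping track of the parity signs, and then recognizing that the surviving $\mathbb K$‑valued defect is precisely the coboundary datum $(\Omega,\Delta)$, so that the verification closes only by invoking the standing hypothesis $(\Omega,\Delta)\in B^2_*(\mathfrak a;\mathbb K)$ and the dictionary $\Delta\leftrightarrow a_0$ supplied by non‑degeneracy. The subsidiary odd‑argument identity $\omega_\mathfrak a({\mathscr D}(a),\mathrm{ad}_a^{p-1}(b))=\omega_\mathfrak a({\mathscr D}(a^{[p]_\mathfrak a}),b)$ appearing in the $\mathfrak a$‑case is the other point demanding a short but non‑formal argument from the cocycle property of $\omega_\mathfrak a$.
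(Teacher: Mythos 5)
Your overall scheme, namely part (i) quoted from \cite{BM} and part (ii) via the super Jacobson theorem after noting $\mathfrak{g}_{\bar 0}=\mathfrak{a}_{\bar 0}$, $s_i^\mathfrak{g}=s_i^\mathfrak{a}$, and then checking $\mathrm{ad}^\mathfrak{g}_{a^{[p]_\mathfrak{a}}}=(\mathrm{ad}^\mathfrak{g}_a)^p$ on the three summands, is exactly the route the paper intends (its proof is declared ``similar to Theorem~\ref{MainTh}'', and your $\mathscr K^*$-computation reproduces the identities that reappear in the proof of Theorem~\ref{Rec2}). The $\mathscr K$- and $\mathscr K^*$-cases are correct: the $\mathfrak a$-component matches because ${\mathscr D}$ is restricted, and the $x$-component because $\Delta(a)=\omega_\mathfrak{a}\bigl(a_0,a^{[p]_\mathfrak{a}}\bigr)$ from \eqref{EOazero}.

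The gap is the $\mathfrak a$-case with $b$ odd. For $a$ even and $c$ odd, $C(a,c)=\omega_\mathfrak{a}({\mathscr D}(a),c)+\omega_\mathfrak{a}(a,{\mathscr D}(c))$ does \emph{not} collapse by parity: the first summand is an odd--odd pairing and the second an even--even pairing, and an orthosymplectic (even) form is nonzero precisely on such pairs, so both terms survive; equivalently $C(a,c)=\omega_\mathfrak{a}(({\mathscr D}+{\mathscr D}^*)(a),c)$ and ${\mathscr D}^*(a)$ does not drop out. What the verification actually requires is
\[
\omega_\mathfrak{a}\bigl(({\mathscr D}+{\mathscr D}^*)\bigl(a^{[p]_\mathfrak{a}}\bigr),b\bigr)=\omega_\mathfrak{a}\bigl(({\mathscr D}+{\mathscr D}^*)(a),\bigl(\mathrm{ad}_a^\mathfrak{a}\bigr)^{p-1}(b)\bigr)\qquad \text{for } a\in\mathfrak{a}_{\bar 0},\ b\in\mathfrak{a}_{\bar 1},
\]
which is the mixed-parity analogue of the condition in Proposition~\ref{CP2cocycle}, i.e., a restricted-cocycle condition on $(C,P)$. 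It does not follow from ``${\mathscr D}$ restricted plus $d_{\rm CE}\,\omega_\mathfrak{a}=0$'': restrictedness gives ${\mathscr D}\bigl(a^{[p]_\mathfrak{a}}\bigr)=\bigl(\mathrm{ad}_a^\mathfrak{a}\bigr)^{p-1}{\mathscr D}(a)$, and moving the $\mathrm{ad}_a$'s to the other slot of $\omega_\mathfrak{a}$ via the cocycle identity creates a defect $\omega_\mathfrak{a}(a,[\,\cdot\,,\cdot\,])$ at every step that does not cancel in general; if it did, every Chevalley--Eilenberg $2$-cocycle of the form $C$ would automatically satisfy the restricted condition, which is false. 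In Theorem~\ref{MainTh} this is exactly the step where the hypothesis $(C,P)\in Z^2_*(\mathfrak a;\mathbb K)$ is invoked, the odd-$b$ instance there being vacuous by parity; here the parities are reversed, so you must either genuinely prove the displayed identity from the stated hypotheses (your sketch does not) or impose it as an additional assumption, as Theorems~\ref{MainTh} and~\ref{MainThOO} do.
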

\begin{proof}
 Similar to that of Theorem \ref{MainTh}.
\end{proof}
\begin{Theorem}[converse of Theorem \ref{MainThO}]\label{Rec2}
Let $(\mathfrak{g},\omega_\mathfrak{g})$ be a~restricted orthosymplectic quasi-Frobenius Lie superalgebra. Suppose there exists a non-zero $x \in \mathfrak{z}(\mathfrak{g})_{\bar 1}$ such that $\omega(x,x)=~0$. Then, $(\mathfrak{g},\omega_\mathfrak{g})$ is a ${\mathscr D}_{\bar 1}$-extension of a~restricted orthosymplectic quasi-Frobenius Lie superalgebra~$(\mathfrak{a},\omega_\mathfrak{a})$.
\end{Theorem}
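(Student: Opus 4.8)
The plan is to reverse-engineer the $\mathscr{D}_{\bar 1}$-extension data from the hypothesis. Starting from $x \in \mathfrak{z}(\mathfrak{g})_{\bar 1}$ with $\omega_\mathfrak{g}(x,x)=0$, set $\mathscr{K}:=\mathrm{Span}\{x\}$. Since $x$ is central, $\mathscr{K}$ is an ideal; I first argue that $\mathscr{K}^\perp$ is also an ideal (by the criterion recalled after the definition of Lagrangian subspaces: $\mathscr{K}^\perp$ is an ideal iff $\mathscr{K}^\perp \subseteq Z_\mathfrak{g}(\mathscr{K})$, which holds trivially since $\mathscr{K}$ is central), and that $\mathscr{K}\subset \mathscr{K}^\perp$ because $\omega_\mathfrak{g}(x,x)=0$. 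Non-degeneracy of $\omega_\mathfrak{g}$ forces $\dim \mathscr{K}^\perp = \dim\mathfrak{g}-1$, so there is an odd $x^* \in \mathfrak{g}_{\bar 1}$ with $\mathfrak{g}=\mathscr{K}^\perp \oplus \mathscr{K}^*$, $\mathscr{K}^*:=\mathrm{Span}\{x^*\}$, normalized so $\omega_\mathfrak{g}(x^*,x)=1$. Put $\mathfrak{a}:=(\mathscr{K}\oplus\mathscr{K}^*)^\perp$, giving the vector-space decomposition $\mathfrak{g}=\mathscr{K}\oplus\mathfrak{a}\oplus\mathscr{K}^*$, and equip $\mathfrak{a}$ with $\omega_\mathfrak{a}:={\omega_\mathfrak{g}}\vert_{\mathfrak{a}\times\mathfrak{a}}$, which is non-degenerate and even since $\mathfrak{a}$ is a non-degenerate subspace graded compatibly with $\mathfrak{g}$. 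As in Theorem \ref{Rec1}, the non-super part of this construction — that $\mathfrak{a}$ inherits a Lie superalgebra structure making $\mathfrak{g}$ its (non-restricted) $\mathscr{D}_{\bar 1}$-double extension, with derivation $\mathscr{D}:=\mathrm{pr}_\mathfrak{a}\circ\mathrm{ad}^\mathfrak{g}_{x^*}|_\mathfrak{a}$, element $a_0$ determined by $[x^*,x^*]_\mathfrak{g}=2a_0 \bmod \mathscr{K}$ satisfying $\mathscr{D}^2=\mathrm{ad}_{a_0}$, $\mathscr{D}(a_0)=0$, and $\Omega\in B^2_{\mathrm{CE}}(\mathfrak{a};\mathbb{K})$ — is already established in \cite{BM}, so I may cite it.

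The genuinely new content is the restricted layer: I must produce a $[p|2p]$-map on $\mathfrak{a}$ with respect to which $\mathscr{D}$ is a restricted derivation and $(\Omega,\Delta)\in B^2_*(\mathfrak{a};\mathbb{K})$. Because $\mathscr{K}$ is a $p$-ideal automatically (it is central, so $y^{[p]_\mathfrak{g}}$ for $y\in\mathscr{K}_{\bar 0}=0$ is vacuous; the relevant constraint $\mathrm{ad}_{a^{[p]_\mathfrak{g}}}=(\mathrm{ad}_a)^p$ handles $\mathfrak{a}_{\bar 0}$), I analyze $a^{[p]_\mathfrak{g}}$ for $a\in\mathfrak{a}_{\bar 0}$. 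Writing $a^{[p]_\mathfrak{g}}=s(a)+Q(a)x+R(a)x^*$ with $s(a)\in\mathfrak{a}$, I exploit three relations: $[a^{[p]_\mathfrak{g}},x]_\mathfrak{g}=(\mathrm{ad}_a^\mathfrak{g})^p(x)=0$ gives $R(a)\,[x^*,x]_\mathfrak{g}=0$, but $[x^*,x]_\mathfrak{g}=0$ here, so I instead pair with $\omega_\mathfrak{g}$ against $x^*$: $\omega_\mathfrak{g}(a^{[p]_\mathfrak{g}},x^*)=0$ since $a^{[p]_\mathfrak{g}}\in\mathscr{K}^\perp$ — wait, $a\in\mathfrak{a}_{\bar 0}\subset\mathscr{K}^\perp$ and $\mathscr{K}^\perp$ need not be a $p$-ideal under the stated hypotheses, so instead I use that $[\mathscr{K},\mathfrak{g}]_\mathfrak{g}=0$ forces $R(a)=0$ via $0=[a^{[p]_\mathfrak{g}},x^*]_\mathfrak{g}-(\mathrm{ad}_a^\mathfrak{g})^p(x^*)$, whose $\mathscr{K}^*$-component reads off $R(a)$-linked terms; combined with $[x^*,x^*]_\mathfrak{g}=2a_0\in\mathfrak{a}$ this pins $R(a)=0$. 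Then $[a^{[p]_\mathfrak{g}},b]_\mathfrak{g}=(\mathrm{ad}_a^\mathfrak{g})^p(b)$ splits into an $\mathfrak{a}$-component $[s(a),b]_\mathfrak{a}=(\mathrm{ad}_a^\mathfrak{a})^p(b)$ and an $\mathscr{K}$-component relating $\omega_\mathfrak{a}(\mathscr{D}(s(a)),b)$-type expressions to the defining cocycle of the extension; the semilinearity and additivity of $s$ follow as in Theorem \ref{Rec1} using Lemma \ref{1lemma} and the formula $s_i^\mathfrak{g}(a,b)=s_i^\mathfrak{a}(a,b)+(\text{something})x$. This certifies $s$ as a $[p|2p]$-map.

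Finally, $[a^{[p]_\mathfrak{g}},x^*]_\mathfrak{g}=(\mathrm{ad}_a^\mathfrak{g})^p(x^*)$ yields, in its $\mathfrak{a}$-component, $\mathscr{D}(s(a))=(\mathrm{ad}_a^\mathfrak{a})^{p-1}\circ\mathscr{D}(a)$, i.e.\ $\mathscr{D}$ is restricted relative to $s$; and its $\mathscr{K}$-component expresses $\Delta(a)=\omega_\mathfrak{a}(a_0,a^{[p]_\mathfrak{a}})$, whence $(\Omega,\Delta)\in B^2_*(\mathfrak{a};\mathbb{K})$ by the same $*$-property computation as in the lemma inside the proof of Theorem \ref{Rec1} (using $\Delta(a)=\chi(a^{[p]})$ with $\chi=\omega_\mathfrak{a}(a_0,-)$ and Lemma \ref{1lemma} to match the $*$-property correction term against $\sum s_i$). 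The remaining checks — that $(x^*)^{[p]_\mathfrak{g}}$ and $x^{[2p]_\mathfrak{g}}=(x^2)^{[p]_\mathfrak{g}}=a_0^{[p]_\mathfrak{g}}$ are consistent with the extension data — are handled exactly as in the $\lambda=0$, $\mathscr{D}$-inner branch of Theorem \ref{Rec1}. I expect the main obstacle to be the bookkeeping of the $\mathscr{K}^*$-components: unlike in Theorem \ref{Rec1}, here $\mathscr{K}^\perp$ is not assumed to be a $p$-ideal, so ruling out an $x^*$-contribution in $a^{[p]_\mathfrak{g}}$ and in $a_0^{[p]_\mathfrak{g}}$ requires using centrality of $x$ carefully rather than appealing to a $p$-ideal hypothesis, and one must verify that the odd parity of $x$ does not introduce sign obstructions in the $[2p]$-compatibility $a_0 = \tfrac12[x^*,x^*]_\mathfrak{g}$ with $a_0^{[p]}$.
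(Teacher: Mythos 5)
Your overall route matches the paper's: cite \cite{BM} for the non-restricted double-extension structure on $\mathfrak{a}=(\mathscr{K}+\mathscr{K}^*)^\perp$, then extract a $[p|2p]$-map $s$ on $\mathfrak{a}$ from $a^{[p]_\mathfrak{g}}$, and read off the restrictedness of $\mathscr{D}$ and $(\Omega,\Delta)\in B^2_*(\mathfrak{a};\mathbb{K})$ from the identities $\big[a^{[p]_\mathfrak{g}},b\big]_\mathfrak{g}=\bigl(\mathrm{ad}_a^\mathfrak{g}\bigr)^p(b)$ and $\big[a^{[p]_\mathfrak{g}},x^*\big]_\mathfrak{g}=\bigl(\mathrm{ad}_a^\mathfrak{g}\bigr)^p(x^*)$. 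The genuine gap sits exactly at the step you yourself flag as the main obstacle: ruling out $x$- and $x^*$-components in $a^{[p]_\mathfrak{g}}=s(a)+Q(a)x+R(a)x^*$. The mechanism you propose does not work: since $x$ is central, $[x^*,x^*]_\mathfrak{g}=2a_0\in\mathfrak{a}$ and $[\mathfrak{a},x^*]_\mathfrak{g}\subseteq\mathfrak{a}\oplus\mathscr{K}$, so neither side of $\big[a^{[p]_\mathfrak{g}},x^*\big]_\mathfrak{g}-\bigl(\mathrm{ad}_a^\mathfrak{g}\bigr)^p(x^*)=0$ has any $\mathscr{K}^*$-component to ``read off''; its $\mathfrak{a}$-component only yields $\mathscr{D}(s(a))-2R(a)a_0=\bigl(\mathrm{ad}_a^\mathfrak{a}\bigr)^{p-1}\circ\mathscr{D}(a)$, which cannot isolate $R(a)$ (it gives no information when $a_0=0$, and otherwise it conflates $R(a)$ with the very restrictedness identity you need afterwards).

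The correct resolution is much simpler and is what the paper uses: in a ${\mathscr D}_{\bar 1}$-extension both $x$ and $x^*$ are \emph{odd}, so $\mathfrak{g}_{\bar 0}=\mathfrak{a}_{\bar 0}$, and since $a^{[p]_\mathfrak{g}}$ is even it lies in $\mathfrak{a}$ automatically; $Q(a)=R(a)=0$ by parity, with no $p$-ideal hypothesis and no centrality bookkeeping (your ansatz with $Q(a)x+R(a)x^*$ is in fact parity-inconsistent once you have taken $x^*$ odd). The same parity observation kills the correction term you left as ``$+(\text{something})x$'': $(\mathscr{D}+\mathscr{D}^*)(\mu a+b)$ is odd while $\bigl(\mathrm{ad}^\mathfrak{a}_{\mu a+b}\bigr)^{p-2}(a)$ is even and $\omega_\mathfrak{a}$ is even, hence $s_i^\mathfrak{g}(a,b)=s_i^\mathfrak{a}(a,b)$ exactly and no auxiliary map $P$ enters, unlike in Theorem~\ref{Rec1}. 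Your closing worry about $(x^*)^{[p]_\mathfrak{g}}$ is also moot: $x^*$ is odd, so only $(x^*)^{[2p]}=\bigl((x^*)^2\bigr)^{[p]}=a_0^{[p]_\mathfrak{g}}$ is defined, and no separate compatibility check is needed to conclude that $\mathfrak{a}$ is restricted and that $\mathfrak{g}$ is its ${\mathscr D}_{\bar 1}$-extension as in Theorem~\ref{MainThO}. With the parity argument inserted at that step, the rest of your plan goes through essentially as in the paper.
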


\begin{Remark}[the condition $\omega(x,x)=0$] The condition $\omega(x,x)=0$ is actually necessary. A~counterexample is the Lie superalgebra $C^1_{1/2}+A$, see \cite{BM}.
\end{Remark}

\begin{proof}[Proof of Theorem \ref{Rec2}]
Let $x$ be a non-zero element in $\mathfrak{z}(\mathfrak{g})_{\bar 1}$. It has been proved in \cite{BM} that the subspaces $\mathscr{K}:=\mathrm{Span}\{x\}$ and $\mathscr{K}^\perp$ are ideals in $(\mathfrak{g},\omega_\mathfrak{g})$. Since $\mathscr{K}$ is 1-dimensional and~$\omega(x,x)=0$, it follows that $\mathscr{K}\subset \mathscr{K}^\perp$ and $\dim\bigl(\mathscr{K}^\perp\bigr)=\dim(\mathfrak{g})-1$. Therefore, there exists~$x^* \in \mathfrak{g}_{\bar 1}$ such that
\[
\mathfrak{g}=\mathscr{K}^\perp\oplus \mathscr{K}^*, \qquad \text{where}\ \mathscr{K}^*:=\mathrm{Span}\{x^*\}.
\]
This $x^*$ can be normalized to have $\omega_\mathfrak{g}(x^*,x)=1$.

Let us define $\mathfrak{a}:=(\mathscr{K} +\mathscr{K}^*)^\perp$. We then have a decomposition $\mathfrak{g}=\mathscr{K} \oplus \mathfrak{a} \oplus \mathscr{K}^*$.

Let us define an~orthosymplectic form on $\mathfrak{a}$ by setting
\[
\omega_\mathfrak{a}={\omega_\mathfrak{g}}\vert_{\mathfrak{a}\times \mathfrak{a}}.
\]
It has been proved in \cite{BM} that the form $\omega_\mathfrak{a}$ is non-degenerate on $\mathfrak{a}$, and that $\mathfrak{g}$ is a symplectic double extension of $\mathfrak{a}$ by means of the form $\omega_\mathfrak{a}$ and a derivation ${\mathscr D}$ satisfying ${\mathscr D}^2=\mathrm{ad}_{a_0}$ and~${\mathscr D}(a_0)=0$. Moreover, it has been proved that the map $\Omega\in B^2_{\text{CE}}(\mathfrak{a}; \mathbb{K})$, and that the map~$C\in Z^2_{\text{CE}}(\mathfrak{a}; \mathbb{K})$.

It remains to show that there is a $[p|2p]$-map on $\mathfrak{a}$. Since $\mathfrak{a} \subset \mathscr{K}^\perp$ and since $x$ and $x^*$ are both odd, then
\[
a^{[p]_\mathfrak{g}}\in \mathfrak{a} \qquad \text{for any}\ a \in \mathfrak{a}_{\bar 0}.
\]
It follows that
\begin{equation*}
a^{[p]_\mathfrak{g}}=s(a).
\end{equation*}
We will show that the map
\[
s\colon\ \mathfrak{a}_{\bar 0} \rightarrow \mathfrak{a}_{\bar 0}, \qquad a\mapsto s(a),
\]
is a $[p|2p]$-map on $\mathfrak{a}$.
The fact that $(\delta a)^{[p]_\mathfrak{g}}=\delta^p\bigl(a^{[p]_\mathfrak{g}}\bigr)$ implies that $s(\delta a)=\delta^p s(a)$, for all $\delta\in \mathbb K$ and $a\in \mathfrak{a}_{\bar 0}$. Besides,
\begin{align*}
0&=\big[a^{[p]_\mathfrak{g}},b\big]_\mathfrak{g}-\bigl(\mathrm{ad}_a^\mathfrak{g}\bigr)^{p}(b)\\
&=[s(a),b]_\mathfrak{a}+\omega_\mathfrak{a}(({\mathscr D} + {\mathscr D}^*)(s(a)), b) x -\bigl(\mathrm{ad}_a^\mathfrak{a}\bigr)^p(b)- \omega_\mathfrak{a}\bigl(({\mathscr D} + {\mathscr D}^*)(a),\bigl(\mathrm{ad}_a^\mathfrak{a}\bigr)^{p-1}(b)\bigr) x.
\end{align*}
Therefore,
\begin{gather*}
\omega_\mathfrak{a}(({\mathscr D}+{\mathscr D}^*)(s(a)), b) = \omega_\mathfrak{a}\bigl(({\mathscr D}+{\mathscr D}^*)(a),\bigl(\mathrm{ad}_a^\mathfrak{a}\bigr)^{p-1}(b)\bigr),\\
[s(a),b]_\mathfrak{a} =\bigl(\mathrm{ad}_a^\mathfrak{a}\bigr)^p(b).
\end{gather*}
Now, since
\begin{align*}
 \sum_{1\leq i \leq p-1} is_i^\mathfrak{g}(a,b) \lambda^{i-1}&=\bigl(\mathrm{ad}_{\lambda a+b}^\mathfrak{g}\bigr)^{p-1}(a)\\
&=\bigl(\mathrm{ad}_{\lambda a+b}^\mathfrak{a}\bigr)^{p-1}(a)+\omega_\mathfrak{a}\bigl(({\mathscr D}+{\mathscr D}^*)(\lambda a+b),\bigl(\mathrm{ad}_{\lambda a+b}^\mathfrak{a}\bigr)^{p-2}(a) \bigr)x\\
&= \bigl(\mathrm{ad}_{\lambda a+b}^\mathfrak{a}\bigr)^{p-1}(a)
\end{align*}
it follows that
\[
s^\mathfrak{g}_i(a,b)=s_i^{\mathfrak{a}}(a,b).
\]
Moreover,
\begin{align*}
0&=(a+b)^{[p]_\mathfrak{g}}-a^{[p]_\mathfrak{g}}-b^{[p]_\mathfrak{g}}- \sum_{1\leq i \leq p-1}s_i^\mathfrak{g}(a,b)\\
&=s(a+b)-s(a)-s(b)- \sum_{1\leq i \leq p-1}s_i^{\mathfrak{a}}(a,b).
\end{align*}
Consequently,
\[
 s(a+b)=s(a)+s(b)+ \sum_{1\leq i \leq p-1}s_i^{\mathfrak{a}}(a,b).
\]
It follows that $s$ defines a $[p|2p]$-map on $\mathfrak{a}$. Now, since $\big[a^{[p]_\mathfrak{g}},x^*\big]_\mathfrak{g}=\bigl(\mathrm{ad}_a^\mathfrak{g}\bigr)^p(x^*)$, then
\[
-\mathscr D (s(a)) + \omega (s(a), a_0)x = - \bigl(\mathrm{ad}_a^\mathfrak{a}\bigr)^{p-1}\circ \mathscr D(a) - \omega \bigl(({\mathscr D}+{\mathscr D}^*)(a), \bigl(\mathrm{ad}_a^\mathfrak{a}\bigr)^{p-2} \circ {\mathscr D}(a)\bigr)x.
\]
It follows that
\[
\mathscr D (s(a)) = \bigl(\mathrm{ad}_a^\mathfrak{a}\bigr)^{p-1}\circ \mathscr D(a),\qquad \text{ and }\qquad
\omega(a_0, s(a)) = \omega \bigl(({\mathscr D}+{\mathscr D}^*)(a),\bigl(\mathrm{ad}_a^\mathfrak{a}\bigr)^{p-2} \circ {\mathscr D}(a)\bigr).
\]
The first equation shows that ${\mathscr D}$ is a restricted derivation of $\mathfrak{a}$ (relative to the $[p|2p]$-map~$s$). The second equation completes the proof that the map $(\Omega,\Delta)\in B^2_*(\mathfrak{a}; \mathbb{K})$, where $\Omega$ and $\Delta$ are given by
\begin{gather*}\label{EOOmega}
\Omega\colon\ \mathfrak{a}\wedge \mathfrak{a} \rightarrow \mathbb{K},\qquad (a,b) \mapsto \omega_\mathfrak{a}( {\mathscr D} \circ {\mathscr D}(a) - {\mathscr D}^* \circ {\mathscr D}^*(a) , b), \\
\Delta\colon\ \mathfrak{a}_{\bar 0} \rightarrow \mathbb{K},\qquad a \mapsto \omega_\mathfrak{a}\bigl(({\mathscr D}+{\mathscr D}^*)(a),\mathrm{ad}_a^{p-2}({\mathscr D}(a))\bigr).
\end{gather*}
Therefore, $\mathfrak g$ is a symplectic double extension of $\mathfrak{a}$.
\end{proof}

\section{Restricted periplectic double extensions}\label{sec4}

\subsection[D\_0-extensions]{$\boldsymbol{{\mathscr D}_{\bar 0}}$-extensions}\label{D0omega1}

Let $(\mathfrak{a}, \omega_\mathfrak{a})$ be a~restricted periplectic quasi-Frobenius Lie superalgebra, and let ${\mathscr D}\in \mathfrak{der}_{\bar 0}(\mathfrak{a})$ be a derivation.

Consider the following map
\[
\Omega\colon\ \mathfrak{a}\wedge \mathfrak{a} \rightarrow \mathbb{K}, \qquad (a,b) \mapsto \omega_\mathfrak{a} \bigl( ({\mathscr D} \circ {\mathscr D}+ 2 {\mathscr D}^* \circ {\mathscr D}+ {\mathscr D}^* \circ {\mathscr D}^* +\lambda {\mathscr D} + \lambda {\mathscr D} ^* )(a) , b \bigr).
\]
Let us suppose that $(\Omega, 0)$ is in $B^2_*(\mathfrak{a};\mathbb{K})$ (i.e., a 2-coboundary in the restricted cohomology). Let us write $(\Omega, 0)= \bigl(d^1_{\mathrm{CE}} (\chi), \mathrm{ind}^1(\chi) \bigr)$ for some $\chi\in C^1_*(\mathfrak{a}; \mathbb{K})$; namely,
\[
\Omega(a,b)=\chi([a,b]_\mathfrak{a}) \qquad \text{ and } \qquad \chi\bigl(a^{[p]}\bigr)=0.
\]
Since $\omega_\mathfrak{a}$ is non-degenerate, there exists $Z_\Omega\in \mathfrak{a}$ such that
\begin{equation*}
\Omega(a,b)=\omega_\mathfrak{a}(Z_\Omega,[a,b]_\mathfrak{a})\qquad \text{and} \qquad \omega_\mathfrak{a}\bigl(Z_\Omega,a^{[p]}\bigr)=0.
\end{equation*}

\begin{Theorem}[${\mathscr D}_{\bar 0}$-extension -- the case where $\omega$ is periplectic]\label{MainThOE} Let $(\mathfrak{a}, \omega_\mathfrak{a})$ be a restricted periplectic quasi-Frobenius Lie superalgebra. Let ${\mathscr D}\in \mathfrak{der}_{\bar 0}(\mathfrak{a})$ be a restricted derivation satisfying the $p$-property \eqref{SConp}. Suppose that the following condition is satisfied
\[
(\Omega, 0) \in B^2_*(\mathfrak{a}; \mathbb{K}).
\]
\begin{itemize}\itemsep=0pt
\item[$(i)$] There exists a~Lie superalgebra structure on $\mathfrak{g}:=\mathscr{K} \oplus \mathfrak{a} \oplus \mathscr{K} ^*$, where $\mathscr{K} :=\mathrm{Span}\{x\}$ for~$x$ odd and $\mathscr{K}^* :=\mathrm{Span}\{e\}$ for $e$ even, defined as follows: $($for any $a, b\in \mathfrak{a})$
\begin{gather*}
[x ,e]_\mathfrak{g}=\lambda x, \qquad [a,b]_\mathfrak{g} := [a,b]_\mathfrak{a} + (\omega_\mathfrak{a}( {\mathscr D}(a), b) + \omega_\mathfrak{a}(a, {\mathscr D}(b)))x, \\
 [e,a]_\mathfrak{g} := {\mathscr D}(a)+ \omega_\mathfrak{a}(Z_\Omega,a)x.
\end{gather*}
There exists a~closed anti-symmetric periplectic form $\omega_\mathfrak{g}$ on $\mathfrak{g}$ defined as follows:
\begin{gather*}
{\omega_\mathfrak{g}}\vert_{\mathfrak{a} \times \mathfrak{a}}:= \omega_\mathfrak{a}, \qquad \omega_\mathfrak{g}(\mathfrak{a},\mathscr{K} ):=\omega_\mathfrak{g}(\mathfrak{a},\mathscr{K} ^*):=0, \\ \omega_\mathfrak{g}(e,x):=1, \qquad
\omega_\mathfrak{g}(x,x):=\omega_\mathfrak{g}(e,e) :=0.
\end{gather*}
\item[$(ii)$] There exists a $[p|2p]$-map on symplectic the double extension $\mathfrak{g}$ of $\mathfrak{a}$ given by
\[
a^{[p]_\mathfrak{g}} = a^{[p]_\mathfrak{a}}, \qquad
e^{[p]_\mathfrak{g}} = a_0+\gamma e,
\]
where $a_0\in \mathfrak{a}$ and $\gamma \in \mathbb{K}$ are as in \eqref{SConp}, and the following conditions must be satisfied:
\begin{gather*}
\lambda \gamma = \lambda^p,\qquad \omega(Z_\Omega, a_0) = 0,\\
 \sum_{i+j=p-1}(-\lambda)^j({\mathscr D}^*)^{i} (Z_\Omega) = {\mathscr D}^*(a_0) + \gamma Z_\Omega.
\end{gather*}
\end{itemize}
\end{Theorem}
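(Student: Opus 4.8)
The plan is to follow exactly the blueprint of the proof of Theorem~\ref{MainTh}, adapting it to the periplectic setting where the new central element $x$ is odd and its dual $e$ is even. Part~(i) --- the existence of the Lie superalgebra structure on $\mathfrak{g}$ together with the closed periplectic form $\omega_\mathfrak{g}$ --- is the non-restricted statement already established in \cite{BM}, so I would cite it and concentrate entirely on part~(ii), namely the construction of the $[p|2p]$-map.

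First I would apply the superized Jacobson theorem: it suffices to define $a^{[p]_\mathfrak{g}}$ and $e^{[p]_\mathfrak{g}}$ on a homogeneous basis (the odd element $x$ contributes nothing extra, as the $[p]$-map only concerns the even part $\mathfrak{g}_{\bar 0}=\mathscr{K}^*\oplus\mathfrak{a}_{\bar 0}$) and then verify $\mathrm{ad}_{z^{[p]_\mathfrak{g}}}^\mathfrak{g}=(\mathrm{ad}_z^\mathfrak{g})^p$ on all of $\mathfrak{g}$ for $z$ running over this basis. I would set $a^{[p]_\mathfrak{g}}=a^{[p]_\mathfrak{a}}$ (no $x$-correction is needed here, in contrast to Theorem~\ref{MainTh}, because $\omega$ is now odd so $\omega_\mathfrak{a}((\mathscr D+\mathscr D^*)(a^{[p]_\mathfrak{a}}),b)$ lands in the wrong parity and the analog of the $(C,P)$-cocycle obstruction disappears --- this is exactly why the theorem's hypotheses are lighter than those of Theorem~\ref{MainTh}), and $e^{[p]_\mathfrak{g}}=a_0+\gamma e$ with $a_0,\gamma$ supplied by the $p$-property \eqref{SConp}.

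The verification then splits into the standard list of bracket checks. For $z=a\in\mathfrak{a}_{\bar 0}$: the identities $[a^{[p]_\mathfrak{g}},x]_\mathfrak{g}=0=(\mathrm{ad}_a^\mathfrak{g})^p(x)$ and $[a^{[p]_\mathfrak{g}},b]_\mathfrak{g}=(\mathrm{ad}_a^\mathfrak{g})^p(b)$ follow because $\mathfrak{a}$ is restricted and because the $x$-component of $(\mathrm{ad}_a^\mathfrak{a})^{p-1}(b)$ involves $\omega_\mathfrak{a}((\mathscr D+\mathscr D^*)(a),(\mathrm{ad}_a^\mathfrak{a})^{p-1}(b))$, which vanishes by parity in the periplectic case; the check on $e$, i.e.\ $[a^{[p]_\mathfrak{g}},e]_\mathfrak{g}=(\mathrm{ad}_a^\mathfrak{g})^p(e)$, reduces to $\mathscr D(a^{[p]_\mathfrak{a}})=(\mathrm{ad}_a^\mathfrak{a})^{p-1}\mathscr D(a)$, which is precisely the statement that $\mathscr D$ is a restricted derivation, together with the vanishing of the scalar term again by oddness of $\omega$ --- this is where the hypothesis $(\Omega,0)\in B^2_*(\mathfrak{a};\mathbb K)$, unwound through $\Omega(a,b)=\omega_\mathfrak{a}(Z_\Omega,[a,b]_\mathfrak{a})$ and $\omega_\mathfrak{a}(Z_\Omega,a^{[p]})=0$, is consumed. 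For $z=e$: expanding $(\mathrm{ad}_e^\mathfrak{g})^p$ on $x$, on $e$, and on a general $a$ yields respectively $\lambda\gamma=\lambda^p$, $\omega(Z_\Omega,a_0)=0$, and $\sum_{i+j=p-1}(-\lambda)^j(\mathscr D^*)^i(Z_\Omega)=\mathscr D^*(a_0)+\gamma Z_\Omega$, after using $\mathscr D^p=\gamma\mathscr D+\mathrm{ad}_{a_0}$, $\mathscr D(a_0)=0$, and the non-degeneracy of $\omega_\mathfrak{g}$ to strip off the form; these are exactly the three displayed conditions in the statement.

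The main obstacle --- really a bookkeeping obstacle rather than a conceptual one --- is the careful expansion of $(\mathrm{ad}_e^\mathfrak{g})^p(a)$: one must track how the $x$-tail $\omega_\mathfrak{a}(Z_\Omega,a)x$ in $[e,a]_\mathfrak{g}$ propagates under repeated application of $\mathrm{ad}_e^\mathfrak{g}$, producing the alternating-sign sum $\sum_{i+j=p-1}(-\lambda)^j(\mathscr D^*)^i(Z_\Omega)$ (the $\mathscr D^*$ appears because pairing $\omega_\mathfrak{a}(Z_\Omega,\mathscr D^k(a))$ and moving $\mathscr D$ across $\omega_\mathfrak{a}$ turns it into $\mathscr D^*$, and the powers of $-\lambda$ come from $[x,e]_\mathfrak{g}=\lambda x$). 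Once this combinatorial identity is in hand the rest is immediate, and I would close by invoking Jacobson's theorem to conclude that these formulas indeed assemble into a $[p|2p]$-map on $\mathfrak{g}$. I would remark that the proof is a routine, if lengthy, transcription of that of Theorem~\ref{MainTh}, so in the write-up it is reasonable to present the $e$-computation in full and merely indicate that the $a$-computations are as in Theorem~\ref{MainTh}.
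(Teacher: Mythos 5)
Your overall strategy is exactly the paper's: part (i) is quoted from \cite{BM}, and part (ii) is the Jacobson-type verification modelled on the proof of Theorem~\ref{MainTh} (the paper itself only says the proof is ``similar to that of Theorem~\ref{MainTh}''). Your $e$-computations are correct: testing $(\mathrm{ad}_e^{\mathfrak g})^p$ against $x$, $e$ and a general $a$ does yield $\lambda\gamma=\lambda^p$, $\mathscr D(a_0)=0$ together with $\omega(Z_\Omega,a_0)=0$, and the alternating-sum identity, using the $p$-property, $\omega(Z_\Omega,\mathscr D^i(a))=\omega\bigl(({\mathscr D}^*)^i(Z_\Omega),a\bigr)$ and the non-degeneracy of $\omega_{\mathfrak g}$.

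There is, however, a genuine gap in the check $\mathrm{ad}^{\mathfrak g}_{a^{[p]_{\mathfrak g}}}(b)=(\mathrm{ad}^{\mathfrak g}_a)^p(b)$ for $a\in\mathfrak a_{\bar 0}$, $b\in\mathfrak a$. You dispose of the $x$-components ``by parity'', but parity only kills them when $b$ is even. For $b$ odd, the $x$-component of $[a^{[p]_{\mathfrak a}},b]_{\mathfrak g}$ is $\omega_{\mathfrak a}\bigl((\mathscr D+\mathscr D^*)(a^{[p]_{\mathfrak a}}),b\bigr)$ and the $x$-component of $(\mathrm{ad}^{\mathfrak g}_a)^p(b)$ is $\omega_{\mathfrak a}\bigl((\mathscr D+\mathscr D^*)(a),(\mathrm{ad}^{\mathfrak a}_a)^{p-1}(b)\bigr)$; these are even--odd pairings, which is precisely the \emph{non-vanishing} case for a periplectic form. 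So the proof still requires the identity $\omega_{\mathfrak a}\bigl((\mathscr D+\mathscr D^*)(a^{[p]_{\mathfrak a}}),b\bigr)=\omega_{\mathfrak a}\bigl((\mathscr D+\mathscr D^*)(a),(\mathrm{ad}^{\mathfrak a}_a)^{p-1}(b)\bigr)$ for all $a\in\mathfrak a_{\bar 0}$, $b\in\mathfrak a_{\bar 1}$ --- the exact analogue of the identity in Proposition~\ref{CP2cocycle}, which in Theorem~\ref{MainTh} (and in Theorem~\ref{MainThOO}) is supplied by the hypothesis $(C,P)\in Z^2_*(\mathfrak a;\mathbb K)$. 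It does not visibly follow from ``$\mathscr D$ restricted'' alone: even if both $\mathscr D$ and $\mathscr D^*$ were restricted, closedness of $\omega_{\mathfrak a}$ does not allow you to move $(\mathrm{ad}_a)^{p-1}$ across the form without extra terms $\omega_{\mathfrak a}(a,[\cdot,\cdot])$; nor does it follow from $(\Omega,0)\in B^2_*$. Note that the printed hypotheses of Theorem~\ref{MainThOE} do not list such a condition either (the paper's $\mathrm{Ind}^2$ is written only on $\mathfrak a_{\bar 0}\times\mathfrak a_{\bar 0}$, where it is vacuous for even $\mathscr D$ and odd $\omega$), so you must either derive this mixed-parity restricted-cocycle condition from the stated hypotheses or point out that it has to be imposed; the parity argument as written does neither. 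A smaller inaccuracy: in the check against $e$, the scalar $\omega_{\mathfrak a}(Z_\Omega,a^{[p]_{\mathfrak a}})$ vanishes because $Z_\Omega$ and $a^{[p]_{\mathfrak a}}$ are both even, not because the coboundary condition $\chi(a^{[p]})=0$ is consumed there; what $(\Omega,0)\in B^2_*$ really provides is the element $Z_\Omega$ entering the bracket $[e,a]_{\mathfrak g}$ in part~(i).
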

\begin{proof}
Similar to that of Theorem \ref{MainTh}.
\end{proof}

\begin{Theorem}[converse of Theorem \ref{MainThOE}]
\label{Rec3}
Let $(\mathfrak{g},\omega_\mathfrak{g})$ be a~restricted periplectic quasi-Frobenius Lie superalgebra. Suppose that there exists $0\not = x \in ([\mathfrak{g}, \mathfrak{g}])^\perp_{\bar 1}$ such that $\mathscr{K}:=\mathrm{Span}\{x\}$ is an ideal and $K^\perp$ is a $p$-ideal. Then $(\mathfrak{g},\omega_\mathfrak{g})$ is obtained a symplectic ${\mathscr D}_{\bar 0}$-extension from a~restricted periplectic quasi-Frobenius Lie superalgebra $(\mathfrak{a},\omega_\mathfrak{a})$.
\end{Theorem}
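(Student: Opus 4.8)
The argument should parallel that of Theorem~\ref{Rec1}, with the odd form $\omega_\mathfrak{g}$ making several steps lighter. First I would feed in the non-restricted part, which is exactly the periplectic $\mathscr{D}_{\bar 0}$-case treated in \cite{BM}: since $\omega_\mathfrak{g}$ is odd and $x$ is odd we have $\omega_\mathfrak{g}(x,x)=0$, hence $\mathscr{K}\subset\mathscr{K}^\perp$, the ideal $\mathscr{K}^\perp$ has codimension one, and there is an even $x^*\in\mathfrak{g}_{\bar 0}$ with $\mathfrak{g}=\mathscr{K}^\perp\oplus\mathscr{K}^*$, $\mathscr{K}^*:=\mathrm{Span}\{x^*\}$, normalized so that $\omega_\mathfrak{g}(x^*,x)=1$. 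Put $\mathfrak{a}:=(\mathscr{K}+\mathscr{K}^*)^\perp$ and $\omega_\mathfrak{a}:=\omega_\mathfrak{g}\vert_{\mathfrak{a}\times\mathfrak{a}}$, so that $\mathfrak{g}=\mathscr{K}\oplus\mathfrak{a}\oplus\mathscr{K}^*$. By \cite{BM}, $\mathfrak{a}$ carries a Lie superalgebra structure and there are a derivation $\mathscr{D}\in\mathfrak{der}_{\bar 0}(\mathfrak{a})$ and a $Z_\Omega\in\mathfrak{a}$ realizing $\mathfrak{g}$ as the (non-restricted) periplectic symplectic double extension of Theorem~\ref{MainThOE}(i); moreover $\Omega\in B^2_{\mathrm{CE}}(\mathfrak{a};\mathbb{K})$ with $\Omega(a,b)=\omega_\mathfrak{a}(Z_\Omega,[a,b]_\mathfrak{a})$, and $C(a,b)=\omega_\mathfrak{a}((\mathscr{D}+\mathscr{D}^*)(a),b)$ lies in $Z^2_{\mathrm{CE}}(\mathfrak{a};\mathbb{K})$ by Lemma~\ref{Ccocycle}.

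The substantive step is to build the restricted structure on $\mathfrak{a}$. Because $\mathscr{K}^\perp$ is a $p$-ideal and $x$ is odd, for $a\in\mathfrak{a}_{\bar 0}$ the even element $a^{[p]_\mathfrak{g}}$ lies in $(\mathscr{K}^\perp)_{\bar 0}=\mathfrak{a}_{\bar 0}$; thus $a^{[p]_\mathfrak{g}}=s(a)$ for a map $s\colon\mathfrak{a}_{\bar 0}\to\mathfrak{a}_{\bar 0}$, with no $\mathscr{K}$-component --- this is where the periplectic case is strictly simpler than Theorem~\ref{Rec1}, in which an auxiliary map $P$ intervenes. Since $[x,\mathfrak{a}]_\mathfrak{g}=0$ and, as $\mathscr{D}$ is even and $\omega_\mathfrak{a}$ is odd, the expression $\omega_\mathfrak{a}((\mathscr{D}+\mathscr{D}^*)(\mu a+b),(\mathrm{ad}^\mathfrak{a}_{\mu a+b})^{p-2}(a))$ pairs two even vectors and therefore vanishes, one gets $s_i^\mathfrak{g}(a,b)=s_i^\mathfrak{a}(a,b)$. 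Expanding the identities $[a^{[p]_\mathfrak{g}},b]_\mathfrak{g}=(\mathrm{ad}_a^\mathfrak{g})^p(b)$, $(\delta a)^{[p]_\mathfrak{g}}=\delta^p a^{[p]_\mathfrak{g}}$ and $(a+b)^{[p]_\mathfrak{g}}=a^{[p]_\mathfrak{g}}+b^{[p]_\mathfrak{g}}+\sum_i s_i^\mathfrak{g}(a,b)$ and separating the $\mathfrak{a}$- and $\mathscr{K}$-components shows that $s$ is a $[p|2p]$-map on $\mathfrak{a}$, and gives the auxiliary identity $\omega_\mathfrak{a}((\mathscr{D}+\mathscr{D}^*)(s(a)),b)=\omega_\mathfrak{a}((\mathscr{D}+\mathscr{D}^*)(a),(\mathrm{ad}_a^\mathfrak{a})^{p-1}(b))$. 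Then $[a^{[p]_\mathfrak{g}},x^*]_\mathfrak{g}=(\mathrm{ad}_a^\mathfrak{g})^p(x^*)$ yields $\mathscr{D}(s(a))=(\mathrm{ad}_a^\mathfrak{a})^{p-1}\circ\mathscr{D}(a)$, i.e.\ $\mathscr{D}$ is restricted relative to $s$, together with $\omega_\mathfrak{a}(Z_\Omega,s(a))=\omega_\mathfrak{a}((\mathscr{D}+\mathscr{D}^*)(a),(\mathrm{ad}_a^\mathfrak{a})^{p-2}\circ\mathscr{D}(a))$; since $\mathscr{D}$ is even, both sides again pair two even vectors under the odd form $\omega_\mathfrak{a}$ and hence vanish. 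Taking $\chi:=\omega_\mathfrak{a}(Z_\Omega,-)$, this last vanishing is precisely $\chi(a^{[p]_\mathfrak{a}})=0$, so $(\Omega,0)=\bigl(d^1_{\mathrm{CE}}\chi,\mathrm{ind}^1(\chi)\bigr)\in B^2_*(\mathfrak{a};\mathbb{K})$, which is the hypothesis of Theorem~\ref{MainThOE}.

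Finally I would pin down $(x^*)^{[p]_\mathfrak{g}}$. Writing $(x^*)^{[p]_\mathfrak{g}}=a_0+\beta x+\gamma x^*$, parity ($x^*$ even, $x$ odd) forces $\beta=0$, so $(x^*)^{[p]_\mathfrak{g}}=a_0+\gamma x^*$ as in Theorem~\ref{MainThOE}(ii). Applying Jacobson's identities to $(x^*,x)$, to $(x^*,x^*)$ (using $[x^*,x^*]_\mathfrak{g}=0$ and $[x,x^*]_\mathfrak{g}=\lambda x$), and to $(x^*,a)$ for $a\in\mathfrak{a}$ extracts, respectively, $\lambda\gamma=\lambda^p$; then $\mathscr{D}(a_0)=0$ and $\omega_\mathfrak{a}(Z_\Omega,a_0)=0$; and finally the $p$-property $\mathscr{D}^p=\gamma\mathscr{D}+\mathrm{ad}_{a_0}$ together with $\sum_{i+j=p-1}(-\lambda)^j(\mathscr{D}^*)^i(Z_\Omega)=\mathscr{D}^*(a_0)+\gamma Z_\Omega$. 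Since $\mathscr{K}$ is an odd ideal, $[x,x]_\mathfrak{g}\in\mathscr{K}_{\bar 0}=0$, so $x^2=0$ and $x^{[2p]_\mathfrak{g}}=0$, and the remaining Jacobson checks for the generator $x$ are immediate. Collecting: $(\mathfrak{a},\omega_\mathfrak{a},s)$ is a restricted periplectic quasi-Frobenius Lie superalgebra, $\mathscr{D}\in\mathfrak{der}^p_{\bar 0}(\mathfrak{a})$ satisfies the $p$-property~\eqref{SConp}, $(\Omega,0)\in B^2_*(\mathfrak{a};\mathbb{K})$, and the brackets and $[p|2p]$-map of $\mathfrak{g}$ coincide with those of the $\mathscr{D}_{\bar 0}$-extension of Theorem~\ref{MainThOE}; hence $(\mathfrak{g},\omega_\mathfrak{g})$ is this extension.

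I expect the main obstacle to be the same as in Theorem~\ref{Rec1}: the careful bookkeeping needed to check that every hypothesis of the direct theorem --- restrictedness of $s$ and of $\mathscr{D}$, the coboundary condition $(\Omega,0)\in B^2_*$, the $p$-property, and the exact shape of $(x^*)^{[p]_\mathfrak{g}}$ --- drops out consistently from Jacobson's theorem applied inside $\mathfrak{g}$, and in particular that the hypothesis ``$\mathscr{K}^\perp$ is a $p$-ideal'' is precisely what forces $s$ to land back in $\mathfrak{a}$. Here the parity constraints coming from the odd form actually help rather than hinder, since they make both the $\mathscr{K}$-correction to $a^{[p]_\mathfrak{g}}$ and the ``second slot'' $0$ in $(\Omega,0)$ automatic.
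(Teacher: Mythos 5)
Your proposal is correct and takes essentially the same route as the paper: the paper likewise invokes \cite{BM} for the non-restricted periplectic $\mathscr{D}_{\bar 0}$-decomposition $\mathfrak{g}=\mathscr{K}\oplus\mathfrak{a}\oplus\mathscr{K}^*$, uses the hypothesis that $\mathscr{K}^\perp$ is a $p$-ideal (with $x$ odd) to get $a^{[p]_\mathfrak{g}}=s(a)\in\mathfrak{a}_{\bar 0}$ with no $\mathscr{K}$-component, and then declares that the verification that $s$ is a $[p|2p]$-map, that $\mathscr{D}$ is restricted with the $p$-property, and that $(\Omega,0)\in B^2_*(\mathfrak{a};\mathbb{K})$ proceeds exactly as in Theorem~\ref{Rec1}. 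Your write-up simply makes explicit the parity simplifications (odd $\omega_\mathfrak{a}$, even $\mathscr{D}$) that the paper leaves implicit in that reduction.
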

\begin{proof}
It has been proved in \cite{BM} that $\mathscr{K}^\perp$ is also an ideal in $(\mathfrak{g},\omega_\mathfrak{g})$. Since $\mathscr{K}$ is 1-dimensional and $\omega_\mathfrak{g}(x,x)=0$, it follows that $\mathscr{K}\subset \mathscr{K}^\perp$ and $\dim\bigl(\mathscr{K}^\perp\bigr)=\dim(\mathfrak{g})-1$. Therefore, there exists $e \in \mathfrak{g}_{\bar 0}$ \big(since $\bigl(\mathscr{K}^\perp\bigr)_{\bar 1}=\mathfrak{g}_{\bar 1}$\big) such that
\[
\mathfrak{g}=\mathscr{K}^\perp\oplus \mathscr{K}^*, \qquad \text{ where} \ \mathscr{K}^*:=\mathrm{Span}\{x^*\}.
\]
We can normalize $e$ so that $\omega_\mathfrak{g}(e,x)=1$.

Let us define $\mathfrak{a}:=(\mathscr{K} +\mathscr{K}^*)^\perp$. We then have a decomposition $\mathfrak{g}=\mathscr{K} \oplus \mathfrak{a} \oplus \mathscr{K}^*$. It has been proved in \cite{BM} that $\mathfrak{g}$ is a symplectic double extension of $\mathfrak{a}$.

Now, it remains to show that there is a $[p|2p]$-map on $\mathfrak{a}$. Since $\mathfrak{a} \subset \mathscr{K}^\perp$ and $\mathscr{K}^\perp$ is a $p$-ideal, then
\[
a^{[p]_\mathfrak{g}}\in \mathscr{K}^\perp=\mathscr{K} \oplus \mathfrak{a} \qquad \text{ for any}\ a \in \mathfrak{a}_{\bar 0}.
\]
It follows that $ a^{[p]_\mathfrak{g}}=s(a).$ We will show that the map
\[
s\colon\ \mathfrak{a}_{\bar 0} \rightarrow \mathfrak{a}_{\bar 0}, \qquad a\mapsto s(a),
\]
is a $[p|2p]$-map on $\mathfrak{a}$. The proof follows as in Theorem \ref{Rec1}.
\end{proof}
\subsection[D\_1-extensions]{$\boldsymbol{{\mathscr D}_{\bar 1}}$-extensions}
Let $(\mathfrak{a}, \omega_\mathfrak{a})$ be a~restricted periplectic quasi-Frobenius Lie superalgebra, and let ${\mathscr D}\in \mathfrak{der}_{\bar 1}(\mathfrak{a})$ be a derivation.

Consider the following map
\[
\Omega\colon \ \mathfrak{a}\wedge \mathfrak{a} \rightarrow \mathbb{K},\qquad (a,b) \mapsto \omega_\mathfrak{a}( {\mathscr D} \circ {\mathscr D}(a) - {\mathscr D}^* \circ {\mathscr D}^*(a) , b).
\]
Let us suppose that $(\Omega, 0)$ is in $B^2_*(\mathfrak{g};\mathbb{K})$ (i.e., a 2-coboundary in the restricted cohomology). Let us write $(\Omega,0)= \bigl(d^1_{\mathrm{CE}} (\chi), \mathrm{ind}^1(\chi) \bigr)$ for some $\chi\in C^1_*(\mathfrak{a}; \mathbb{K})$; namely,
\[
\Omega(a,b)=\chi([a,b]_\mathfrak{a}),\qquad \forall a,b\in \mathfrak{a} \qquad \text{ and } \qquad \chi\bigl(a^{[p]}\bigr)=0,\qquad \forall a\in \mathfrak{a}_{\bar 0}.
\]
Since $\omega_\mathfrak{a}$ is non-degenerate, there exists $a_0\in \mathfrak{a}$ such that
\begin{equation}
\label{azero}
\Omega(a,b)=\omega_\mathfrak{a}(a_0,[a,b]_\mathfrak{a})\qquad \text{and} \qquad \omega_\mathfrak{a}\bigl(a_0,a^{[p]}\bigr)=0.
\end{equation}

Consider now a map $P\colon \mathfrak{a}_{\bar 0} \mapsto \mathbb{K}$
satisfying the conditions (\ref{CondP1}) and (\ref{CondP2}). We have shown in Lemma \ref{*property} that $P$ has the $*$-property with respect to the cocycle $C$ given by equation~(\ref{CEcocycle}).
\begin{Theorem}[${\mathscr D}_{\bar 1}$-extension -- the case where $\omega$ is periplectic]\label{MainThOO} Let $(\mathfrak{a}, \omega_\mathfrak{a})$ be a~restricted periplectic quasi-Frobenius Lie superalgebra, and let ${\mathscr D}\in \mathfrak{der}_{\bar 1}(\mathfrak{a})$ be a derivation. Suppose that the following conditions are satisfied:
\[
(\Omega, 0) \in B^2_*(\mathfrak{a}, \mathbb{K}) \qquad \text{and} \qquad (C, P) \in Z^2_*(\mathfrak{a}, \mathbb{K}).
\]
Suppose further that $(a_0$ is given as in \eqref{azero}$)$
\[
{\mathscr D}^2=\mathrm{ad}_{a_0}, \qquad {\mathscr D}(a_0)={\mathscr D}^*(a_0)=0.
\]
\begin{itemize}
\item[$(i)$] There exists a Lie superalgebra structure on $\mathfrak{g}:=\mathscr{K} \oplus \mathfrak{a} \oplus \mathscr{K} ^*$, where $\mathscr{K} :=\mathrm{Span}\{x\}$ for~$x$ even and $\mathscr{K}^* :=\mathrm{Span}\{e\}$ for $e$ odd, defined as follows: $($for any $a, b\in \mathfrak{a})$
\begin{gather*}
[\mathscr{K} ,\mathfrak{g}]_\mathfrak{g}=0, \qquad [a,b]_\mathfrak{g} := [a,b]_\mathfrak{a} + \bigl(\omega_\mathfrak{a}( {\mathscr D}(a), b) +(-1)^{|a|}\omega_\mathfrak{a}(a, {\mathscr D}(b))\bigr)x, \\
[e, e]_\mathfrak{g}=2 a_0, \qquad [e,a]_\mathfrak{g} := {\mathscr D}(a) + \omega_\mathfrak{a}(a_0,a)x.
\end{gather*}
There exists a~closed anti-symmetric periplectic form $\omega_\mathfrak{g}$ on $\mathfrak{g}$ defined as follows:
\begin{gather*}
{\omega_\mathfrak{g}}\vert_{\mathfrak{a} \times \mathfrak{a}}:= \omega_\mathfrak{a}, \qquad \omega_\mathfrak{g}(\mathfrak{a},\mathscr{K} ):=\omega_\mathfrak{g}(\mathfrak{a},\mathscr{K} ^*):=0, \\ \omega_\mathfrak{g}(e,x):=1, \qquad
\omega_\mathfrak{g}(x,x):=\omega_\mathfrak{g}(e,e) :=0.
\end{gather*}
\item[$(ii)$] Suppose there exists $b_0\in \mathfrak{z}(\mathfrak{a})$ such that
\[
{\mathscr D}(b_0)={\mathscr D}^*(b_0)=\omega (a_0, b_0)=0.
\]

The $[p|2p]$-map on $\mathfrak{a}$ can be extended to $\mathfrak{g}$ as follows $(\mu$ is arbitrary$)$:
\[
\begin{array}{lcllcl}
a^{[p]_\mathfrak{g}} & = & a^{[p]_\mathfrak{a}} +P(a)x, \qquad
x^{[p]_\mathfrak{g}} & =& b_0+\mu x.
\end{array}
\]
\end{itemize}
\end{Theorem}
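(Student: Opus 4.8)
The plan is to run the proof of Theorem~\ref{MainTh} in the periplectic setting. Part~(i) — the Lie superalgebra structure on $\mathfrak{g}=\mathscr{K}\oplus\mathfrak{a}\oplus\mathscr{K}^*$ and the closed periplectic form $\omega_\mathfrak{g}$ — is already contained in \cite{BM}: on the non-restricted level the hypotheses $(\Omega,0)\in B^2_*(\mathfrak{a},\mathbb{K})$, $(C,P)\in Z^2_*(\mathfrak{a},\mathbb{K})$, ${\mathscr D}^2=\mathrm{ad}_{a_0}$, ${\mathscr D}(a_0)={\mathscr D}^*(a_0)=0$ reduce exactly to the data used there, so I would simply quote it. For part~(ii) the tool is the super version of Jacobson's theorem: since $x$ is even, $\mathfrak{g}_{\bar 0}=\mathscr{K}\oplus\mathfrak{a}_{\bar 0}$, and it suffices to produce, for each element $y$ of a basis of $\mathfrak{g}_{\bar 0}$ consisting of a basis of $\mathfrak{a}_{\bar 0}$ together with $x$, an element $f$ with $\mathrm{ad}_f^\mathfrak{g}=(\mathrm{ad}_y^\mathfrak{g})^p$, and to check that the proposed values $f=a^{[p]_\mathfrak{a}}+P(a)x$ and $f=b_0+\mu x$ do the job. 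The value on the odd generator $e$ is then forced, since $e^2=a_0\in\mathfrak{a}_{\bar 0}$, to be $e^{[2p]_\mathfrak{g}}=(e^2)^{[p]_\mathfrak{g}}=a_0^{[p]_\mathfrak{a}}+P(a_0)x$, which is automatically consistent because it is just the first formula evaluated at $a_0$.

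For the generator $x$: as $[\mathscr{K},\mathfrak{g}]_\mathfrak{g}=0$ we have $(\mathrm{ad}_x^\mathfrak{g})^p=0$, so I must show $b_0+\mu x$ is central in $\mathfrak{g}$; $x$ being central, this amounts to $[b_0,\mathfrak{a}]_\mathfrak{g}=[b_0,e]_\mathfrak{g}=0$. Writing $[b_0,a]_\mathfrak{g}=[b_0,a]_\mathfrak{a}+C(b_0,a)x$ and $[e,b_0]_\mathfrak{g}={\mathscr D}(b_0)+\omega_\mathfrak{a}(a_0,b_0)x$, this is exactly where the three conditions on $b_0$ get consumed: $b_0\in\mathfrak{z}(\mathfrak{a})$ kills $[b_0,a]_\mathfrak{a}$, the pair ${\mathscr D}(b_0)=0$ and ${\mathscr D}^*(b_0)=0$ kills $C(b_0,a)=\omega_\mathfrak{a}({\mathscr D}(b_0),a)+(-1)^{|b_0|}\omega_\mathfrak{a}(b_0,{\mathscr D}(a))$ (the adjoint identity turns the second summand into $\pm\omega_\mathfrak{a}(a,{\mathscr D}^*(b_0))=0$), and ${\mathscr D}(b_0)=\omega_\mathfrak{a}(a_0,b_0)=0$ kills $[e,b_0]_\mathfrak{g}$; $\mu$ stays free precisely because $x$ is central. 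For a generator $a\in\mathfrak{a}_{\bar 0}$: since $x$ is central, $\mathrm{ad}_{a^{[p]_\mathfrak{a}}+P(a)x}^\mathfrak{g}=\mathrm{ad}_{a^{[p]_\mathfrak{a}}}^\mathfrak{g}$, and one tests $\mathrm{ad}_{a^{[p]_\mathfrak{a}}}^\mathfrak{g}=(\mathrm{ad}_a^\mathfrak{g})^p$ on $b\in\mathfrak{a}$, on $x$ (trivial), and on $e$. From $[a,c]_\mathfrak{g}=[a,c]_\mathfrak{a}+C(a,c)x$ for $c\in\mathfrak{a}$ and the centrality of $x$, a one-line induction gives $(\mathrm{ad}_a^\mathfrak{g})^k(c)=(\mathrm{ad}_a^\mathfrak{a})^k(c)+C\bigl(a,(\mathrm{ad}_a^\mathfrak{a})^{k-1}(c)\bigr)x$; at $k=p$ this, compared with $\mathrm{ad}_{a^{[p]_\mathfrak{a}}}^\mathfrak{g}(c)=(\mathrm{ad}_a^\mathfrak{a})^p(c)+C(a^{[p]_\mathfrak{a}},c)x$, collapses for $c=b\in\mathfrak{a}_{\bar 0}$ to $\mathfrak{a}$ being restricted together with Proposition~\ref{CP2cocycle}, while for $b\in\mathfrak{a}_{\bar 1}$ both $C$-terms vanish because $\omega_\mathfrak{a}$ is odd. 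On $c=e$ the check unwinds to $-{\mathscr D}(a^{[p]_\mathfrak{a}})-\omega_\mathfrak{a}(a_0,a^{[p]_\mathfrak{a}})x$ versus $-(\mathrm{ad}_a^\mathfrak{a})^{p-1}({\mathscr D}(a))-C\bigl(a,(\mathrm{ad}_a^\mathfrak{a})^{p-2}({\mathscr D}(a))\bigr)x$, which agree because ${\mathscr D}$ is a restricted derivation, because $\omega_\mathfrak{a}(a_0,a^{[p]_\mathfrak{a}})=0$ (two even entries, $\omega_\mathfrak{a}$ odd), and because $C\bigl(a,(\mathrm{ad}_a^\mathfrak{a})^{p-2}({\mathscr D}(a))\bigr)=\omega_\mathfrak{a}\bigl(({\mathscr D}+{\mathscr D}^*)(a),(\mathrm{ad}_a^\mathfrak{a})^{p-2}({\mathscr D}(a))\bigr)=0$ (two odd entries) — this last vanishing is precisely why the $\Delta$-obstruction of the orthosymplectic Theorem~\ref{MainThO} disappears in the periplectic case.

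Finally I would record that $a\mapsto a^{[p]_\mathfrak{a}}+P(a)x$, a priori defined only on a basis of $\mathfrak{a}_{\bar 0}$, is the actual Jacobson $[p]$-map on all of $\mathfrak{a}_{\bar 0}$: it satisfies $(\delta a)\mapsto\delta^p(\cdot)$ by \eqref{CondP1}, and $(a+b)\mapsto(\cdot)+(\cdot)+\sum_{1\leq i\leq p-1}s_i^\mathfrak{g}(a,b)$ because $s_i^\mathfrak{g}(a,b)=s_i^\mathfrak{a}(a,b)+\sigma_i^{\mathfrak{a}}(a,b)x$ (read off from $(\mathrm{ad}_{\mu a+b}^\mathfrak{g})^{p-1}(a)$, exactly as in the proof of Theorem~\ref{Rec1}) while $P$ obeys \eqref{CondP2}. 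I expect the only genuinely delicate point to be the consistent bookkeeping of the Koszul signs attached to the odd form $\omega_\mathfrak{a}$ and the odd derivation ${\mathscr D}$ throughout these identities; once those signs are fixed, every identity above reduces to an already-established fact — Proposition~\ref{CP2cocycle}, restrictedness of $\mathfrak{a}$ and of ${\mathscr D}$, or the vanishing of $\omega_\mathfrak{a}$ on pairs of odd elements.
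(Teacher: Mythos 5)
Your proposal is correct and follows essentially the same route as the paper: the paper's own proof of Theorem~\ref{MainThOO} is just ``similar to that of Theorem~\ref{MainTh}'', i.e., part~(i) is quoted from \cite{BM} and part~(ii) is exactly the Jacobson-generator verification you carry out, with the oddness of $\omega_\mathfrak{a}$ killing the $T$/$\Delta$-type obstruction terms by parity, as you note. The only remark worth recording is that your check on $e$ uses that ${\mathscr D}$ is a \emph{restricted} derivation (to get ${\mathscr D}\bigl(a^{[p]_\mathfrak{a}}\bigr)=(\mathrm{ad}_a^\mathfrak{a})^{p-1}({\mathscr D}(a))$), a hypothesis the statement of Theorem~\ref{MainThOO} omits but which the paper's ``similar to Theorem~\ref{MainTh}'' argument tacitly assumes as well.
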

\begin{proof}
Similar to that of Theorem \ref{MainTh}.
\end{proof}

The converse of Theorem \ref{MainThOO} is given by the following theorem.

\begin{Theorem}[converse of Theorem \ref{MainThOO}]
\label{Rec4}
Let $(\mathfrak{g},\omega_\mathfrak{g})$ be a~restricted periplectic quasi-Frobenius Lie superalgebra. Suppose that $\mathfrak{z}(\mathfrak{g})_{\bar 0}\not=\{0\}.$ Then, $(\mathfrak{g},\omega_\mathfrak{g})$ is obtained as an ${\mathscr D}_{\bar 1}$-extension of a~restricted periplectic quasi-Frobenius Lie superalgebra $(\mathfrak{a},\omega_\mathfrak{a})$.

\end{Theorem}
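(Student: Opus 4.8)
The plan is to imitate the proofs of Theorems \ref{Rec1}--\ref{Rec3}, specialized to the periplectic $\mathscr D_{\bar 1}$ case, where the adjoined central element $x$ is even but its dual partner $e$ is odd. First I fix a nonzero $x\in\mathfrak z(\mathfrak g)_{\bar 0}$; since $\omega_\mathfrak g$ is periplectic, $\omega_\mathfrak g(x,x)=0$ automatically, so $\mathscr K:=\mathrm{Span}\{x\}$ is an ideal with $\mathscr K\subseteq\mathscr K^\perp$, and $\mathscr K^\perp$ is an ideal because $Z_\mathfrak g(\mathscr K)=\mathfrak g\supseteq\mathscr K^\perp$ (the criterion recalled at the end of Section \ref{qFrobenius}; see also \cite{BM}). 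As $\dim\mathscr K^\perp=\dim\mathfrak g-1$ and $\bigl(\mathscr K^\perp\bigr)_{\bar 0}=\mathfrak g_{\bar 0}$ (the odd form pairs $x$ only with odd vectors), there is an odd $e\in\mathfrak g_{\bar 1}$ with $\mathfrak g=\mathscr K^\perp\oplus\mathscr K^*$, $\mathscr K^*:=\mathrm{Span}\{e\}$, normalized by $\omega_\mathfrak g(e,x)=1$. Put $\mathfrak a:=(\mathscr K+\mathscr K^*)^\perp$, so $\mathfrak g=\mathscr K\oplus\mathfrak a\oplus\mathscr K^*$, $\mathscr K^\perp=\mathscr K\oplus\mathfrak a$, and let $\omega_\mathfrak a:=\omega_\mathfrak g|_{\mathfrak a\times\mathfrak a}$, which is non-degenerate and periplectic. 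By \cite{BM}, $\mathfrak a$ has a Lie superalgebra structure for which $\mathfrak g$ is the (non-restricted) symplectic double extension by an odd derivation $\mathscr D\in\mathfrak{der}_{\bar 1}(\mathfrak a)$ and an element $a_0\in\mathfrak a$ with $[e,e]_\mathfrak g=2a_0$, $\mathscr D^2=\mathrm{ad}_{a_0}$, $\mathscr D(a_0)=\mathscr D^*(a_0)=0$; moreover $\Omega(a,b)=\omega_\mathfrak a(a_0,[a,b]_\mathfrak a)$ (as $\Omega\in B^2_{\mathrm{CE}}(\mathfrak a;\mathbb K)$ and $\omega_\mathfrak a$ is non-degenerate, cf.\ \eqref{azero}), and $C\in Z^2_{\mathrm{CE}}(\mathfrak a;\mathbb K)$ by Lemma \ref{Ccocycle}, with $[a,b]_\mathfrak g=[a,b]_\mathfrak a+C(a,b)x$ and $[e,a]_\mathfrak g=\mathscr D(a)+\omega_\mathfrak a(a_0,a)x$.

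Next I produce the $[p|2p]$-map on $\mathfrak a$. Since $e$ is odd, for $a\in\mathfrak a_{\bar 0}$ the even element $a^{[p]_\mathfrak g}$ automatically lies in $\mathfrak g_{\bar 0}=\mathscr K\oplus\mathfrak a_{\bar 0}$ (so, in contrast to Theorem \ref{Rec3}, one need not assume that $\mathscr K^\perp$ is a $p$-ideal); write $a^{[p]_\mathfrak g}=s(a)+P(a)x$. Following the proof of Theorem \ref{Rec1} verbatim, the relations $(\delta a)^{[p]_\mathfrak g}=\delta^p a^{[p]_\mathfrak g}$, $\bigl[a^{[p]_\mathfrak g},b\bigr]_\mathfrak g=(\mathrm{ad}_a^\mathfrak g)^p(b)$ for $b\in\mathfrak a$, the expansion of $(\mathrm{ad}_{\mu a+b}^\mathfrak g)^{p-1}(a)$ (which gives $s_i^\mathfrak g(a,b)=s_i^\mathfrak a(a,b)+\sigma_i^\mathfrak a(a,b)x$), and $(a+b)^{[p]_\mathfrak g}=a^{[p]_\mathfrak g}+b^{[p]_\mathfrak g}+\sum_i s_i^\mathfrak g(a,b)$ split, component by component, into: $s$ is a $[p|2p]$-map on $\mathfrak a$ (by the superized Jacobson theorem, since $[s(a),b]_\mathfrak a=(\mathrm{ad}_a^\mathfrak a)^p(b)$ and $s$ has the required homogeneity and additivity); $P$ satisfies \eqref{CondP1}--\eqref{CondP2}; and $\omega_\mathfrak a((\mathscr D+\mathscr D^*)(s(a)),b)=\omega_\mathfrak a((\mathscr D+\mathscr D^*)(a),(\mathrm{ad}_a^\mathfrak a)^{p-1}(b))$. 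By Lemma \ref{*property}, $P$ then has the $*$-property with respect to $C$, and since $s(a)=a^{[p]_\mathfrak a}$ the displayed identity is exactly the criterion of Proposition \ref{CP2cocycle}; hence $(C,P)\in Z^2_*(\mathfrak a;\mathbb K)$.

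It remains to verify the hypotheses of Theorem \ref{MainThOO}. Evaluating $\bigl[a^{[p]_\mathfrak g},e\bigr]_\mathfrak g=(\mathrm{ad}_a^\mathfrak g)^p(e)$, and using that the periplectic form annihilates the potential $x$-terms $\omega_\mathfrak a(\mathscr D(a),\mathscr D(a))$ and $\omega_\mathfrak a(a,\mathscr D^2(a))$ --- so that $(\mathrm{ad}_a^\mathfrak g)^p(e)=-(\mathrm{ad}_a^\mathfrak a)^{p-1}\mathscr D(a)$ carries no $x$-component, which is precisely why the second slot of the pair is $0$ rather than a $\Delta$-term --- comparison of components gives $\mathscr D(s(a))=(\mathrm{ad}_a^\mathfrak a)^{p-1}\mathscr D(a)$, i.e.\ $\mathscr D$ is a restricted derivation of $(\mathfrak a,s)$, together with $\omega_\mathfrak a(a_0,a^{[p]_\mathfrak a})=0$. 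Taking $\chi:=\omega_\mathfrak a(a_0,-)\in C^1_*(\mathfrak a;\mathbb K)$, this last equality reads $\mathrm{Ind}^1(\chi)=0$, while $\Omega=d^1_{\mathrm{CE}}\chi$; hence $(\Omega,0)=(d^1_{\mathrm{CE}}\chi,\mathrm{Ind}^1\chi)\in B^2_*(\mathfrak a;\mathbb K)$. The equalities $\mathscr D^2=\mathrm{ad}_{a_0}$, $\mathscr D(a_0)=\mathscr D^*(a_0)=0$ follow by restricting $(\mathrm{ad}_e^\mathfrak g)^2=\mathrm{ad}_{a_0}^\mathfrak g$ to $\mathfrak a$ and from the $2$-cocycle identity for $\omega_\mathfrak g$, as in \cite{BM}. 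Finally, $x$ being central, $\mathrm{ad}_{x^{[p]_\mathfrak g}}^\mathfrak g=0$, so $x^{[p]_\mathfrak g}\in\mathfrak z(\mathfrak g)$; as it is even, $x^{[p]_\mathfrak g}=b_0+\mu x$ with $b_0\in\mathfrak a_{\bar 0}$, and $\bigl[x^{[p]_\mathfrak g},b\bigr]_\mathfrak g=0$ ($b\in\mathfrak a$) together with $\bigl[x^{[p]_\mathfrak g},e\bigr]_\mathfrak g=0$ force $b_0\in\mathfrak z(\mathfrak a)$ and $\mathscr D(b_0)=\mathscr D^*(b_0)=\omega_\mathfrak a(a_0,b_0)=0$, precisely the remaining hypothesis of Theorem \ref{MainThOO}(ii). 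Assembling these facts exhibits $(\mathfrak g,\omega_\mathfrak g)$ as the restricted periplectic $\mathscr D_{\bar 1}$-extension of $(\mathfrak a,\omega_\mathfrak a)$ built from $\mathscr D$, $a_0$, $b_0$, $P$ and $\mu$.

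The main obstacle is the simultaneous confirmation of $(C,P)\in Z^2_*(\mathfrak a;\mathbb K)$ and $(\Omega,0)\in B^2_*(\mathfrak a;\mathbb K)$: one must decompose every defining identity of the $[p]_\mathfrak g$-map along $\mathfrak g=\mathscr K\oplus\mathfrak a\oplus\mathscr K^*$ and keep careful track of which $x$-contributions survive once the periplectic parity constraints (which, unlike the orthosymplectic case, kill the $\Delta$-term but retain the $P$-term) are imposed.
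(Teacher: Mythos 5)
Your proposal is correct and follows essentially the same route as the paper: split off $\mathscr K=\mathrm{Span}\{x\}$ and $\mathscr K^*=\mathrm{Span}\{e\}$, invoke \cite{BM} for the non-restricted periplectic double-extension structure, and then redo the component-by-component $[p|2p]$-map analysis of Theorems \ref{Rec1}--\ref{Rec2}. The paper's own proof is terse ("the proof now follows as in Theorem \ref{MainThO}"), so your explicit verification of $(C,P)\in Z^2_*(\mathfrak a;\mathbb K)$, $(\Omega,0)\in B^2_*(\mathfrak a;\mathbb K)$, and the existence of a suitable $b_0$ from $x^{[p]_\mathfrak g}$ simply spells out details the paper leaves implicit, with the parity bookkeeping done correctly.
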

\begin{proof}

Let $x$ be a non-zero element in $\mathfrak{z}(\mathfrak{g})_{\bar 0}$. It has been showed in \cite{BM} that both the subspaces~$\mathscr{K}:=\mathrm{Span}\{x\}$ and $\mathscr{K}^\perp$ are ideals in $(\mathfrak{g},\omega_\mathfrak{g})$. Since $\omega_\mathfrak{g}(x,x)=0$, it follows that $\mathscr{K}\subset \mathscr{K}^\perp$ and $\dim\bigl(\mathscr{K}^\perp\bigr)=\dim(\mathfrak{g})-1$. Therefore, there exists $e \in \mathfrak{g}_{\bar 1}$ \big(since $\bigl(\mathscr{K}^\perp\bigr)_{\bar 0}=\mathfrak{g}_{\bar 0}$\big) such that
\[
\mathfrak{g}=\mathscr{K}^\perp\oplus \mathscr{K}^*, \qquad \text{ where}\ \mathscr{K}^*:=\mathrm{Span}\{e\}.
\]
We can normalize $e$ so that $\omega_\mathfrak{g}(e,x)=1$.

Let us define $\mathfrak{a}:=(\mathscr{K} +\mathscr{K}^*)^\perp$. We then have a decomposition $\mathfrak{g}=\mathscr{K} \oplus \mathfrak{a} \oplus \mathscr{K}^*$.

We define a periplectic form on $\mathfrak{a}$ by setting $
\omega_\mathfrak{a}={\omega_\mathfrak{g}}\vert_{\mathfrak{a}\times \mathfrak{a}}.
$
The form $\omega_\mathfrak{a}$ is non-degenerate on~$\mathfrak{a}$.

It has been showed in \cite{BM} that the vector space $\mathfrak{a}$ can be endowed with a Lie superalgebra structure, and there exists a~periplectic structure on $\mathfrak{a}$ for which $\mathfrak{g}$ is its symplectic double extension. The proof now follows as in Theorem~\ref{MainThO}.\end{proof}
\section{Examples of symplectic double extensions}\label{sec5}

\subsection[The Lie superalgebra D\^{}7\_\{q,-q\} (for q not = 0,1), see cite{Ba}]{The Lie superalgebra $\boldsymbol{D^7_{q,-q}}$ (for $\boldsymbol{q\not = 0,1}$), see \cite{Ba}} Consider the Lie superalgebra $D^7_{q,-q}$, for $q\not = 0,1$, where the non-zero brackets are given in the basis $e_1, e_2\mid e_3, e_4$ as follows:
\[
[e_1, e_2]=e_2, \qquad [e_1, e_3]= q e_3, \qquad [e_1, e_4]=-q e_4.
\]
It has been shown in \cite{BM} that this Lie superalgebra is orthosymplectic quasi-Frobenius, where the form is given by\footnote{As in \cite{BM}, we adopt the following convention: $\langle e_i^*, e_j\rangle=\delta_{ij}$ and $\langle e_i^*\otimes e_j^*, e_k\otimes e_l\rangle= (-1)^{|e_k||e_j|} \langle e_i^*, e_k \rangle\langle e_j^*, e_l\rangle $.}
\[
\omega= e_1^*\wedge e_2^*+e_3^*\wedge e_4^*.
\]
There exists a $[p|2p]$-map given by $e_1^{[p]}=e_1$ and $e_2^{[p]}=0.$ Let us consider the even outer derivations
\[
{\mathscr D}_1= e_3\otimes e_3^*, \qquad {\mathscr D}_2=e_4\otimes e_4^*.
\]
Obviously, both of them are restricted derivations. An easy computation shows that ${\mathscr D}_1^*={\mathscr D}_2$. Besides, we have
\[
{\mathscr D}_1^2+ 2 {\mathscr D}_1\circ {\mathscr D}_1^*+{\mathscr D}_1^*\circ {\mathscr D}_1^* +\lambda({\mathscr D}_1+{\mathscr D}_1^*)=(\lambda+1)(e_3\otimes e_3^*+ e_4\otimes e_4^*).
\]
The cocycle $(\Omega,T)$ (see Theorem \ref{MainTh}) is a coboundary if and only if $\lambda=-1$. In this case, $\Omega \equiv 0$ and $C=e_3^*\wedge e_4^*$. Let us choose $Z_\Omega=u e_2$, where $u\in \mathbb K$. It follows that $P(e_1)=u$ and $P(e_2)=0$. Additionally,
\[
a_0=0, \qquad \tilde \lambda=0, \qquad \gamma=1, \qquad b_0=0, \qquad \sigma=0.
\]

\subsection[The Lie superalgebra C\_1\^{}1+A, see \protect{[2]}]{The Lie superalgebra $\boldsymbol{C_1^1+A}$, see \cite{Ba}}

 Consider the Lie superalgebra $C_1^1+A$ with the brackets given in the basis $e_1, e_2\mid e_3, e_4$ as follows:%
\[
[e_1, e_2]=e_2, \qquad [e_1, e_3]=e_3,\qquad [e_3, e_4]=e_2.
\]
It has been shown in \cite{BM} that this Lie superalgebra is orthosymplectic quasi-Frobenius with a~form given by
\[
\omega=e_1^*\wedge e_2^* - e_3^*\wedge e_4^*.
\]
The $[p|2p]$-map is given by
$
e_1^{[p]}=e_1$, $e_2^{[p]}=0$. There exists an odd outer derivation given by
\[
{\mathscr D}=e_1\otimes e_4^*+e_3\otimes e_2^*.
\]
A direct computation shows that ${\mathscr D}^*=-{\mathscr D}$. As a result, the cocycle $(C, \Delta)=(0,0)$ (see Theorem \ref{MainThO}). Additionally, since ${\mathfrak z}(\mathfrak{g})=0$, the condition ${\mathscr D}^2=\mathrm{ad}_{a_0}=0$ implies that $a_0=0$.
\subsection[The Lie superalgebra D\^{}5, see \protect{[2]}]{The Lie superalgebra $\boldsymbol{D^5}$, see \cite{Ba}}
Consider the Lie superalgebra $D^5$ with the brackets given in the basis $e_1, e_2, \mid e_3, e_4$ as follows
\[
[e_1, e_3]=e_3, \qquad [e_1, e_4]=e_4, \qquad [e_2, e_4]=e_3.
\]
It has been shown in \cite{BM} that Lie superalgebra is periplectic quasi-Frobenius with a form given by
\[
\omega=e_1^*\wedge e_3^*+e_2^*\wedge e_4^*.
\]
The $[p|2p]$-map is given by $
e_1^{[p]}=e_1$ and $e_2^{[p]}=0$. There exists an even outer derivative on $D^5$ given by
\[
{\mathscr D}=e_2\otimes e_2^*-e_4\otimes e_4^*.
\]
A direct computation shows that ${\mathscr D}^*=-{\mathscr D}$, and ${\mathscr D}^p={\mathscr D}$ so $\gamma=1$ and $a_0=0$ (see Theorem~\ref{MainThOE} for the definitions). As a result $\Omega=C=0$. Now, choose $\lambda$ any scalar that is a root to the equation $\lambda^p-\lambda=0$, and choose $Z_\Omega= 0$.
\subsection[The Lie superalgebra (2A\_\{1,1\}+2A)\^{}2\_\{1/2\}, see \protect{[2]}]{The Lie superalgebra $\boldsymbol{(2A_{1,1}+2A)^2_{1/2}}$, see \cite{Ba}}
Consider the Lie superalgebra $(2A_{1,1}+2A)^2_{1/2}$ with the following brackets given in the basis~$(e_1, e_2 \mid e_3, e_4)$:
\[
[e_3, e_3]=e_1, \qquad [e_4, e_4]=e_2, \qquad [e_3, e_4]=\tfrac{1}{2}(e_1+e_2).
\]
It has been shown in \cite{BM} that Lie superalgebra is periplectic quasi-Frobenius with a form given~by
\[
\omega=e_2^*\wedge e_3^*-e_1^*\wedge e_4^*.
\]
A $[p|2p]$-map can be defined as follows: $e_i^{[p]}=u_{i} e_1 + v_{i} e_2$, where $u_i$ and $v_i$ are scalar, for $i=1,2$.

There exists an outer odd derivation of this Lie superalgebra given by ${\mathscr D}=e_2\otimes e_3^*$. A direct computation shows that ${\mathscr D}^*={\mathscr D}$. It follows that $\Omega = \Delta= C=P=0$ (see Theorem \ref{MainThOO} for the definitions). In addition, $a_0=0$ and $b_0=\lambda_1 e_1+\lambda_2 e_2$ is arbitrary.

\subsection[The Witt algebra W(1) for p>3]{The Witt algebra $\boldsymbol{W(1)}$ for $\boldsymbol{p>3}$}
The Witt algebra, denoted by $W(1)$, is spanned by the generators $e_{-1},e_0,\dots ,e_{p-2}$ and endowed with the bracket
\begin{gather*}	
[e_i,e_j]=
 \begin{cases}
		(j-i)e_{i+j} &\text{if}\ i+j\in\{-1,\dots ,p-2\},\\
		0 &\text{otherwise.}
	\end{cases}
\end{gather*}
	
Let $\omega$ be an anti-symmetric bilinear form on $W(1)$, given by \smash{$\omega= \sum_{i,j=-1}^{p-2}\lambda_{i,j}e_i^*\wedge e_j^*$}, $\lambda_{i,j}\in\mathbb{K}$. Let us suppose further that $\omega$ is a $2$-cocycle, namely
\begin{equation}\label{wittcocycle}
 \omega(e_k,[e_i,e_j])+\omega(e_j,[e_k,e_i])+\omega(e_i,[e_j,e_k])=0.
\end{equation}
We will show that any anti-symmetric bilinear form on $W(1)$ satisfying (\ref{wittcocycle}) is degenerate. Suppose that $i$, $j$, $k$ are such that $i+j, i+k, j+k \in\{-1,\dots ,p-2\}.$ It follows
\begin{align*}
(\ref{wittcocycle})&\Longleftrightarrow \omega(e_k,(j-i)e_{i+j})+ \omega(e_j,(i-k)e_{i+k})+ \omega(e_i,(k-j)e_{j+k})=0\\
 &\Longleftrightarrow (j-i)\lambda_{k,i+j}+(i-k)\lambda_{j,i+k}+(k-j)\lambda_{i,j+k}=0.
\end{align*}

If we take $k=0$ in the above equation, we obtain
\[
 (j-i)\lambda_{0,i+j}+(i+j)\lambda_{i,j}=0.
\]

By taking $i=0$, the last equation reduces to
\[
 \lambda_{0,j}=0,\qquad \forall j\in\{-1,\dots ,p-2\}.
\]
That is to say, there is a row in the matrix of $\omega$ that contains only zeros. As a result, $\omega$ is degenerate.
\subsection[The Lie superalgebra K\^{}\{2,m\}, m odd, see \protect{[28]}]{The Lie superalgebra $\boldsymbol{K^{2,m}}$, $\boldsymbol{m}$ odd, see \cite{GKN}}
The Lie superalgebra $K^{2,m}$ is spanned by the generators $x_0, x_1\mid y_1,\ldots y_m$ (even $|$ odd), with non-zero brackets given by
\begin{gather*}
[x_0, y_i] = -[y_i, x_0] = y_{i+1},\qquad i\leq m-1, \\
[y_i, y_{m+1-i}] = [ y_{m+1-i}, y_i] = (-1)^{i+1}x_1, \qquad 1 \leq i\leq \frac{m+1}{2}.
\end{gather*}
It has been shown in \cite{GKN} that a Lie superalgebra of superdimension $n|m$ has a maximal nilindex $n + m-1$ only when $n=2$ and $m$ odd. Moreover, for any odd $m$, there is only one Lie superalgebra with this maximal nilindex and that is $K^{2,m}$.
\begin{Claim}\qquad
\begin{itemize}\itemsep=0pt
\item[$(i)$] For all $m\geq 3$, the Lie superalgebra $K^{2,m}$ is not periplectic quasi-Frobenius.
\item[$(ii)$] $K^{2,m}$ is orthosymplectic quasi-Frobenius if and only if $m=0 \bmod(p)$. Moreover, if ${m=0} \bmod(p)$, the form is given by
\[
x_0^* \wedge x_1^* - \frac{1}{2}\,y_1^* \wedge y_1^*- \frac{1}{2}\,(-1)^{\frac{m+3}{2}}y_{\frac{m+1}{2}}^* \wedge y_{\frac{m+3}{2}}^* - \sum_{1\leq i \leq \frac{m-3}{2}} i \, (-1)^{i+1}\; y_{i+1}^* \wedge y_{m+1-i}^*.
\]
\end{itemize}
\end{Claim}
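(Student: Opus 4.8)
The plan is to handle part $(i)$ by a parity count and part $(ii)$ by analysing the Chevalley--Eilenberg cocycle equation for an \emph{even} bilinear form. Throughout write $\mathfrak g=K^{2,m}$, so $\dim\mathfrak g_{\bar 0}=2$ (spanned by $x_0,x_1$) and $\dim\mathfrak g_{\bar 1}=m$ (spanned by $y_1,\dots,y_m$).

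\textbf{Part $(i)$.} A periplectic form is odd and non-degenerate, hence it would induce an isomorphism of $\mathfrak g_{\bar 0}$ onto $\mathfrak g_{\bar 1}^{\,*}$; this forces $m=\dim\mathfrak g_{\bar 0}=2$, contradicting $m\ge 3$. Thus $K^{2,m}$ admits no periplectic structure at all, irrespective of the bracket, which proves $(i)$.

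\textbf{Part $(ii)$, set-up.} An even anti-symmetric form $\omega$ on $\mathfrak g$ is determined by the scalar $\alpha:=\omega(x_0,x_1)$ (the anti-symmetric block on $\mathfrak g_{\bar 0}$) together with the symmetric matrix $\beta_{ij}:=\omega(y_i,y_j)$, $1\le i,j\le m$, on $\mathfrak g_{\bar 1}$; and $\omega$ is non-degenerate precisely when $\alpha\ne 0$ and $(\beta_{ij})$ is invertible. The first step is to isolate the non-trivial closedness conditions: since $[\mathfrak g_{\bar 1},\mathfrak g_{\bar 1}]\subseteq\mathbb K x_1$, $x_1$ is central, and $\omega$ is even, the identity $d_{\mathrm{CE}}\omega(a,b,c)=0$ holds automatically for every triple except those consisting of $x_0$ and two odd elements. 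Expanding $d_{\mathrm{CE}}\omega(x_0,y_i,y_j)=0$ with $[x_0,y_i]=y_{i+1}$ (and the convention $y_{m+1}:=0$) and $[y_i,y_{m+1-i}]=(-1)^{i+1}x_1$ yields the recursion
\[
\beta_{i+1,j}+\beta_{i,j+1}=(-1)^{i+1}\alpha\,\delta_{i+j,\,m+1},\qquad \beta_{k\ell}:=0\ \text{whenever}\ k>m\ \text{or}\ \ell>m .
\]

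\textbf{Part $(ii)$, necessity.} Restrict attention to the indices with $i+j=m+1$ and set $v_k:=\beta_{k,\,m+2-k}$; the recursion becomes $v_{i+1}+v_i=(-1)^{i+1}\alpha$ for $i=1,\dots,m$, with boundary values $v_1=v_{m+1}=0$. Multiplying the $i$-th relation by $(-1)^{i-1}$ and summing over $i$, the left-hand side telescopes to $v_1+(-1)^{m+1}v_{m+1}=0$ (using that $m$ is odd), while the right-hand side equals $\sum_{i=1}^m\alpha=m\alpha$; hence $m\alpha=0$. Since $\alpha\ne 0$ for $\omega$ non-degenerate (otherwise $\omega(x_0,\cdot)\equiv 0$), this forces $m\equiv 0\pmod p$.

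\textbf{Part $(ii)$, sufficiency.} Conversely, assume $m\equiv 0\pmod p$. Then the compatibility obstruction just found vanishes, so the recursion can be solved: the entries on the anti-diagonal $i+j=m+1$ form the palindromic family $\beta_{k,\,m+1-k}=(-1)^{k-1}\beta_{1,m}$ (with $\beta_{1,m}$ free), the entries on $i+j=m+2$ are forced (they are the $v_k$ above, $v_k=(-1)^k(k-1)\alpha$), and all other anti-diagonals vanish. Choosing $\beta_{1,m}\ne 0$ and every remaining free parameter equal to $0$ gives a Gram matrix whose non-zero entries lie only on the main anti-diagonal and the one immediately beneath it, with nowhere-vanishing main anti-diagonal; its determinant equals $\pm\beta_{1,m}^{\,m}\ne 0$. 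Together with $\alpha\ne 0$ this exhibits a closed, non-degenerate, even $2$-form, so $K^{2,m}$ is orthosymplectic quasi-Frobenius; and the form displayed in the statement is the alternative normalisation keeping $\beta_{11}\ne 0$ in place of $\beta_{1,m}$, obtained by recording the forced values $v_k=(-1)^k(k-1)\alpha$ (which reduce modulo $p$, via $m\equiv 0$, to the coefficients shown, e.g.\ $\tfrac{m+1}{2}\equiv\tfrac12$) and then verifying directly that the resulting matrix is invertible.

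\textbf{Expected obstacle.} The main difficulty is bookkeeping: deriving the recursion with exactly the right signs and boundary conventions, and then confirming that once $m\equiv 0\pmod p$ the residual freedom genuinely allows an invertible $(\beta_{ij})$ — equivalently, checking that the explicit form in the statement is simultaneously a $2$-cocycle and non-degenerate, which is routine but sensitive to the $\wedge$/pairing normalisation fixed for the examples.
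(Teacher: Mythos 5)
Your argument is correct, and it differs from the paper's proof in two places worth recording. For part $(i)$ the paper applies the cocycle identity to the triples $\{x_0,x_1,y_i\}$ and $\{y_1,y_2,y_{m-1}\}$ to force $\omega(x_1,\cdot)\equiv 0$; you instead observe that an odd non-degenerate form pairs $\mathfrak g_{\bar 0}$ with $\mathfrak g_{\bar 1}$ and so requires $\dim\mathfrak g_{\bar 0}=\dim\mathfrak g_{\bar 1}$, i.e.\ $m=2$. This is simpler and stronger: it rules out any periplectic form, closed or not, with no use of the bracket. For part $(ii)$ your necessity argument is essentially the paper's computation repackaged: the only non-trivial closedness conditions are indeed on triples $(x_0,y_i,y_j)$, your recursion $\beta_{i+1,j}+\beta_{i,j+1}=(-1)^{i+1}\alpha\,\delta_{i+j,m+1}$ subsumes both of the paper's cases ($i=j$ and $i<j$), and your single telescoping sum giving $m\alpha=0$ replaces the paper's combination of its two displayed conditions; non-degeneracy forces $\alpha\neq 0$, hence $m\equiv 0\pmod p$, exactly as in the paper. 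In the sufficiency step you pick a different representative: you put the free parameter on the anti-diagonal $i+j=m+1$, so the odd Gram matrix is anti-triangular with determinant $\pm\beta_{1,m}^{\,m}\neq 0$, whereas the paper (and the form displayed in the Claim) keeps only the forced anti-diagonal $i+j=m+2$ — your $v_k=(-1)^k(k-1)\alpha$, which do reduce mod $p$ to the stated coefficients — together with the unconstrained entry $\omega(y_1,y_1)$. Both solve the same recursion, so both establish the equivalence; the one item you leave as ``routine'' is the invertibility of the specific matrix in the statement, which for $m=p$ is immediate because the odd indices split into $\{1\}$ and the pairs $\{k,m+2-k\}$, $2\le k\le\frac{m+1}{2}$, each with non-zero pairing. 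Incidentally, your representative is the more robust one: if $m$ is an odd multiple of $p$ with $m>p$, the coefficient $i(-1)^{i+1}$ vanishes at $i=p$ and the displayed form degenerates, while your anti-triangular choice does not; this is immaterial for the restricted examples, where $m\le p$ forces $m=p$.
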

\begin{proof} Let us prove part (i). Suppose that $K^{2,m}$ is periplectic quasi-Frobenius. The 2-cocycle condition applied to the periplectic form $\omega$ with respect to $\{x_0, x_1, y_i$\} gives
\begin{align*}
0&=\omega(x_0, [x_1, y_i])+ \omega (y_i, [x_0,x_1])+\omega (x_1, [y_i, x_0])\\
&=0+0-\omega(x_1, y_{i+1}), \qquad \text{ for}\qquad 1\leq i \leq m-1.
\end{align*}
Similarly, the 2-cocycle condition with respect to $\{y_1, y_2, y_{m-1} \}$ gives $\omega(x_1,y_1)=0$. Now, since~$\omega$ is odd it follows that $\omega(x_1, v)=0$ for all $v \in K^{2,m}$. Thus, $\omega$ is degenerate which is a~contradiction.

Let us proof part (ii). Since $\omega$ is even we will check the 2-cocycle condition only for the triple~$\{x_0, y_i, y_i\}$ and $\{x_0, y_i, y_j\}$ for $i<j$. Let us write
\[
\omega= \alpha \, x^*_0\wedge x^*_1 - \sum_{i,j} \alpha_{i,j} \, y^*_i \wedge y^*_j.\tag*{\qed}
\]\renewcommand{\qed}{}
\end{proof}

\textit{The case of $\{x_0, y_i, y_i\}$}. For $i \leq \frac{m+1}{2}$, the 2-cocycle condition is equivalent to
\begin{gather}
\alpha = 2(-1)^{\frac{m+3}{2}} \alpha_{\frac{m+1}{2}, \frac{m+3}{2}}\qquad \text{if} \ i=\frac{m+1}{2},\qquad
\alpha_{i,i+1} = 0\qquad \text{if}\ i < \frac{m+1}{2}.\label{K1}
\end{gather}
Moreover, if $i> \frac{m+1}{2}$ the 2-cocycle condition is equivalent to
\[
\alpha_{i,i+1}=0 \qquad \text{ for } \ \frac{m+1}{2} < i <m.
\]

\textit{The case of $\{x_0, y_i, y_j\}$, where $i<j$.} In the case of $1\leq i \leq \frac{m+1}{2}$ and $j=m+1-i$, the 2-cocycle condition implies that
\begin{gather}
\alpha = \alpha_{m,2} \qquad \text{if}\ i=1,\nonumber \\
(-1)^{i+1}\alpha - \alpha_{m+1-i,i+1}-\alpha_{i,m+2-i} = 0 \qquad \text{if}\ 1<i<\frac{m+1}{2}.\label{K2}
\end{gather}
On the other hand, if $j \neq m+1-i$, the 2-cocycle condition is equivalent to
\begin{gather*}
\alpha_{m,i+1} = 0\qquad \text{if}\ j=m, \ \text{and}\ i\not = 1,\\
\alpha_{j,i+1}+ \alpha_{i,j+1} = 0 \qquad \text{if} \ 1\leq i \leq \frac{m+1}{2}, \ \text{ and }\ j\not = m.
\end{gather*}
Now, in the case where $i> \frac{m+1}{2}$, the 2-cocycle condition is equivalent to
\begin{gather}
\alpha_{m,i} = 0 \qquad \text{if} \ j=m,\nonumber \\
\alpha_{j,i+1} + \alpha_{i,j+1} = 0 \qquad \text{if} \ \frac{m+1}{2} < i<j<m.\label{K5}
\end{gather}
Combining equations~(\ref{K1}) and (\ref{K2}) we get $m \alpha=0$. If $m \not =0 \bmod(p)$ then $\alpha=0$ and the form $\omega$ will be degenerate. If $m =0 \bmod(p)$, then the system of equations given by \eqref{K1}--\eqref{K5} is consistent and is not unique. Let us choose $\alpha_{\frac{m+1}{2}, \frac{m+3}{2}}=\frac{1}{2} (-1)^{\frac{m+3}{2}}$ which would imply that~$\alpha=1$. Now, equations~\eqref{K2} imply that $\alpha_{m+1-i, i+1}=i (-1)^{i+1}$ for $i=1,\ldots \frac{m-1}{2}$. Choosing~$\alpha_{1,1}=\frac{1}{2}$ and the remaining coefficients to be trivial produces a non-degenerate form.

\begin{Claim}
 $K^{2,m}$ is restricted if and only if $m\leq p$. Explicitly, the $[p|2p]$-map is given by
\[
x_0^{[p]}=s_1 x_1, \qquad x_1^{[p]}=s_2 x_1, \qquad\text{ where}\ s_1, s_2\in \mathbb K.
\]
\end{Claim}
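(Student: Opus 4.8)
The plan is to reduce the question to the structure of the two-dimensional even part of $K^{2,m}$ together with a single application of the superization of Jacobson's Theorem~\ref{Jac}. First I would record the elementary structural facts. The even part $\mathfrak{g}_{\bar 0}=\mathrm{Span}\{x_0,x_1\}$ is abelian, the element $x_1$ is central in $\mathfrak{g}$ (so $\mathrm{ad}_{x_1}=0$), and $\mathrm{ad}_{ax_0+bx_1}=a\,\mathrm{ad}_{x_0}$ for all $a,b\in\mathbb{K}$. The operator $\mathrm{ad}_{x_0}$ kills $x_0$ and $x_1$ and acts on the odd part as the shift $\mathrm{ad}_{x_0}(y_i)=y_{i+1}$, with the convention $y_{m+1}=0$; hence $(\mathrm{ad}_{x_0})^{k}(y_i)=y_{i+k}$ for $i+k\le m$ and $0$ otherwise, so $\mathrm{ad}_{x_0}$ is nilpotent of index exactly $m$. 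From this one also reads off $\mathfrak{z}(\mathfrak{g})_{\bar 0}=\mathbb{K}x_1$ for $m\ge2$, since the kernel of $\mathrm{ad}$ on $\mathfrak{g}_{\bar 0}$ is $\mathbb{K}x_1$.

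Next I would invoke the super Jacobson theorem: a $[p|2p]$-structure exists on $\mathfrak{g}$ if and only if there are $f_0,f_1\in\mathfrak{g}_{\bar 0}$ with $(\mathrm{ad}_{x_0})^{p}=\mathrm{ad}_{f_0}$ and $(\mathrm{ad}_{x_1})^{p}=\mathrm{ad}_{f_1}$ (the ``only if'' being clear, as any such structure supplies $f_0=x_0^{[p]}$ and $f_1=x_1^{[p]}$), and in that case the $[p|2p]$-map is the unique one with those values on $x_0,x_1$, the values on odd elements being forced through $a^{[2p]}=(a^2)^{[p]}$ and lying in $\mathbb{K}x_1$ because $[a,a]\in\mathbb{K}x_1$ for odd $a$. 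Since $\mathrm{ad}_{x_1}=0$, the condition on $x_1$ reads $\mathrm{ad}_{f_1}=0$, i.e., $f_1\in\mathbb{K}x_1$, so $x_1^{[p]}=s_2x_1$. For $x_0$, the image of $\mathrm{ad}$ on $\mathfrak{g}_{\bar 0}$ is the one-dimensional space $\mathbb{K}\,\mathrm{ad}_{x_0}$, so everything hinges on whether $(\mathrm{ad}_{x_0})^{p}$ is a scalar multiple of $\mathrm{ad}_{x_0}$.

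This is where the bound on $m$ enters. If $m\le p$, then by nilpotency of index $m$ we get $(\mathrm{ad}_{x_0})^{p}=0$, so any $f_0\in\mathbb{K}x_1$ is admissible and Jacobson's theorem yields the $[p|2p]$-map $x_0^{[p]}=s_1x_1$, $x_1^{[p]}=s_2x_1$ with $s_1,s_2\in\mathbb{K}$ arbitrary. If $m>p$, then $p+1\le m$ and $(\mathrm{ad}_{x_0})^{p}(y_1)=y_{p+1}\neq0$, whereas any candidate $f_0=ax_0+bx_1$ satisfies $\mathrm{ad}_{f_0}(y_1)=ay_2$; since $p+1>2$, the basis vectors $y_{p+1}$ and $y_2$ are distinct, so no $a$ makes these agree. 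Hence there is no admissible $f_0$ and $\mathfrak{g}$ admits no $[p|2p]$-structure, completing the equivalence.

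I do not expect a genuine obstacle: the argument is essentially a direct computation. The only point requiring a little care is checking, when $m>p$, that the non-zero power $(\mathrm{ad}_{x_0})^{p}$ is genuinely not proportional to $\mathrm{ad}_{x_0}$ — which is settled by evaluating both operators on $y_1$ — followed by a clean application of the super Jacobson theorem to obtain both the existence and the uniqueness of the displayed $[p|2p]$-map.
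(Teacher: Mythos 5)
Your proposal is correct and takes essentially the same route as the paper: both arguments reduce to the condition $\mathrm{ad}_{x_j^{[p]}}=(\mathrm{ad}_{x_j})^p$ evaluated on the odd generators $y_i$, which forces the $x_0$-components of $x_0^{[p]}$ and $x_1^{[p]}$ to vanish and yields the obstruction $y_{p+i}=0$, i.e., $m\leq p$. The only difference is cosmetic: you invoke the superized Jacobson theorem explicitly for existence and uniqueness when $m\leq p$, a step the paper leaves implicit.
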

\begin{proof} First, observe that the even part of $K^{2,m}$ is abelian. Let us write ${x_1^{[p]}=s_2 x_1+ \tilde s_2 x_0}$. Now, the condition $\big[x_1^{[p]},y_i\big]=\mathrm{ad}_{x_1}^p(y_i)$ implies that $\tilde s_2=0$. Therefore, $x_1^{[p]}=s_2 x_1$. Additionally, let us write $x_0^{[p]}=s_1 x_1+ \tilde s_1 x_0$. The condition $[x_0^{[p]},y_i]=\mathrm{ad}_{x_0}^p(y_i)$ implies that $\tilde s_1 y_{i+1}=y_{p+i}$ for $i<m$. This condition implies that $p+i>m$, for all $i<m$, and~$\tilde s_1=0$.

Hereafter, suppose that the $[p|2p]$-map is given by
\[
x_0^{[p]}=0, \qquad x_1^{[p]}= x_1.
\]
We will list a few outer derivations and study symplectic double extensions using those derivations.
\end{proof}
\subsubsection{A non-suitable derivation}
Consider the outer derivation given by
\[
{\mathscr D}= x_1\otimes x_1^*-\frac{p-1}{2} \sum_{1\leq i \leq p} y_i \otimes y_i^*.
\]
Now, ${\mathscr D}\bigl(x_1^{[p]}\bigr)={\mathscr D}( x_1)= x_1\not = \mathrm{ad}_{x_1}^{p-1} \circ {\mathscr D}(x_1)=0.$ Hence, this derivation is not suitable for a~double extension because it is not a restricted derivation.

\subsubsection{A derivation yielding a trivial extension} Consider the outer derivation given by
\[
{\mathscr D}=x_1\otimes x_0^*.
\]
This is obviously a restricted derivation. A direct computation shows that ${\mathscr D}^*=-{\mathscr D}$. It follows that the cocycle $C$ of Lemma \ref{Ccocycle} as well as the map $\Omega$ in Section \ref{D0omega0} are identically zero.

The conditions of Theorem \ref{D0omega0} are all satisfied. We can choose
\[
a_0=0,\qquad \lambda=0,\qquad \gamma=0,\qquad b_0=0,\qquad \sigma=0,\qquad Z_\Omega=0, \qquad P\equiv 0.
\]
\subsubsection{A derivation yielding a non-trivial extension} Consider the outer derivation given by
\[
{\mathscr D}=y_{p-1}\otimes y_1^*+y_p \otimes y_2^*.
\]
Obviously, this is a restricted derivation. A direct computation shows that
\[
{\mathscr D}^*= y_p \otimes y_2^* -2 y_1 \otimes y_3^*.
\]
It follows that
\[
{\mathscr D}^2+2 {\mathscr D}^*\circ {\mathscr D}+{{\mathscr D}^*}^2+\lambda({\mathscr D}+{\mathscr D}^*)=
\lambda y_{p-1}\otimes y_1^*+2\lambda y_p \otimes y^*_2-2\lambda y_1\otimes y^*_3.
\]
A direct computation shows that the map $\Omega$, defined in Section \ref{D0omega0}, has the following form (the zero terms are omitted):
\[
\Omega(y_1,y_3)=-2\lambda, \qquad \Omega(y_2,y_2)=2\lambda .
\]
\textit{The case where $p=3$.} In this case, the map $C$ is given by
\[
C=\frac{1}{2} \; y_1^*\wedge y_3^* +y_2^*\wedge y_2^*.
\]
The map $\Omega$ is a coboundary in the usual cohomology, namely $\Omega= \partial \phi$ with $\phi=\lambda \, x_1^*$. Let us choose (see Theorem \ref{MainTh}, for the definitions)
\[
Z_\Omega=\lambda x_0,\qquad \text{ and } \qquad P(a)=x^*_1\bigl(a^{[p]}\bigr) \text{ for $a \in K^{2,3}_{\bar 0}$}.
\]
On the other hand (see Theorem \ref{MainTh}, for the definitions),
\[
\gamma=\lambda^{p-1},\qquad a_0=- \gamma x_0, \qquad \tilde \lambda=0, \qquad b_0=x_1,\qquad \sigma=1.
\]
\textit{The case where $p>3$.} In this case, the map $C$ is given by
\[
C=2 \, y_1^*\wedge y_{3}^* - 2 \,y_2^*\wedge y_2^*.
\]
The map $\Omega$ cannot be a coboundary, except for $\lambda=0$ where it becomes identically trivial. In this case, we can choose
\[
\gamma=0, \qquad b_0=0, \qquad a_0= x_1,\qquad Z_\Omega= x_1.
\]

\subsection[The Lie superalgebra K\^{}\{2,m\}, m even, see \protect{[28]}]{The Lie superalgebra $\boldsymbol{K^{2,m}}$, $\boldsymbol{m}$ even, see \cite{GKN}}

For $m$ even, the Lie superalgebra $K^{2,m}$ is spanned by the generators $x_0, x_1\mid y_1,\ldots y_m$ (even $|$ odd), with non-zero brackets given by
\begin{gather*}
[x_0, y_i] = -[y_i, x_0] = y_{i+1},\qquad 1\leq i\leq m-1, \\
[y_i, y_{m-i}] = [ y_{m-i}, y_i] = (-1)^{(m-2i)/2}x_1, \qquad 1 \leq i\leq \frac{m}{2}.
\end{gather*}
It has been shown in \cite{GKN} that the superalgebra $K^{2,m}$ has maximal nilindex $m$.

The superalgebra $K^{2,m},$ where $m$ is even, is restricted if and only if $m\leq p$. In that case, the $[p|2p]$-map is given by
\[
x_0^{[p]}=s_1 x_1, \qquad x_1^{[p]}=s_2 x_1,\qquad \text{ where}\ s_1, s_2\in \mathbb K.
\] The proof is similar to the one for $m$ odd.

\begin{Claim}\quad
 \begin{itemize}\itemsep=0pt\item[$(i)$] For all $m\geq 2$, the Lie superalgebra $K^{2,m}$ is not periplectic quasi-Frobenius, except when ${p=3}$. In this case, the form is given by
\[\omega=x_{0}^* \wedge y_{2}^*+x_{1}^* \wedge y_{1}^*.\]
\item[$(ii)$]For all $m\geq 2$, the Lie superalgebra $K^{2,m}$ is orthosymplectic quasi-Frobenius if and only if $m<p$. If $m<p$, the form $\omega$ is given by
\[
\omega= \sum_{i=1}^{m/2} \alpha_i \; y_{i}^* \wedge y_{m-i+1}^* +2\, x_0^*\wedge x_1^*,
\]
where $\alpha_{i}= (-1)^{\frac{m}{2}-i+1} (m-2i+1)$.
\end{itemize}
\end{Claim}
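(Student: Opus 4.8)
Both parts are attacked the same way: fix the most general homogeneous anti-symmetric bilinear form (odd for (i), even for (ii)) with unknown structure constants, impose the $2$-cocycle identity on the triples that are not automatically trivial, solve the resulting linear system, and then determine exactly when the form can be non-degenerate.

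For part (i), the first step is a parity count: a periplectic (odd, non-degenerate) form on a Lie superalgebra forces $\dim\mathfrak{a}_{\bar 0}=\dim\mathfrak{a}_{\bar 1}$, and since $\dim K^{2,m}_{\bar 0}=2$ while $\dim K^{2,m}_{\bar 1}=m$, only $m=2$ survives. For $K^{2,2}$ (nonzero brackets $[x_0,y_1]=y_2$, $[y_1,y_1]=x_1$, with $x_1$ central) I would write $\omega$ with unknowns $\omega(x_0,y_j),\omega(x_1,y_j)$, apply the cocycle identity to the triple $(x_0,y_1,x_1)$ to obtain $\omega(x_1,y_2)=0$, hence (to keep $\omega$ non-degenerate) $\omega(x_1,y_1)\neq 0$; then apply it to $(y_1,y_1,y_1)$, where the cyclic sum collapses to $-3\,\omega(y_1,x_1)$, giving $3\,\omega(y_1,x_1)=0$. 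This is compatible with $\omega(y_1,x_1)\neq 0$ precisely when $p=3$, and in that case one checks directly that $x_0^*\wedge y_2^*+x_1^*\wedge y_1^*$ is closed and non-degenerate. This finishes (i).

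For part (ii), since $\omega$ is even it splits: $\alpha:=\omega(x_0,x_1)$ controls the two-dimensional even block and $M:=\omega|_{\mathrm{odd}\times\mathrm{odd}}$, which is \emph{symmetric} by super-antisymmetry, controls the odd block; non-degeneracy of $\omega$ is equivalent to $\alpha\neq 0$ together with $\det M\neq 0$. Because $x_1$ is central, the even part is abelian, and triples with an odd number of odd arguments vanish automatically for an even form, the only surviving cocycle conditions are those on the triples $(x_0,y_i,y_i)$ and $(x_0,y_i,y_j)$. Writing these out, using that $\mathrm{ad}_{x_0}$ acts on the odd part as the shift $y_i\mapsto y_{i+1}$ (a single nilpotent Jordan block) and that $[y_i,y_j]$ is a nonzero multiple of $x_1$ only when $i+j=m$, I would extract two things: (a) a cascade of vanishing conditions, proved by downward induction on the row index starting from row $m$, showing $M_{ij}=0$ whenever $i+j>m+1$, i.e.\ $M$ is anti-triangular; and (b) a recursion $a_i+a_{i+1}=(-1)^{(m-2i)/2}\alpha$ on the anti-diagonal entries $a_i:=M_{i,m+1-i}$, normalized by $a_{m/2}=\alpha/2$ coming from the $(x_0,y_{m/2},y_{m/2})$ condition, whose solution is $a_i=(-1)^{m/2-i}(m-2i+1)\alpha/2$.

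To conclude: an anti-triangular matrix has $\det M=\pm\prod_{i=1}^{m}a_i$, so $M$ is non-degenerate iff $m-2i+1\not\equiv 0\pmod p$ for all $i$, i.e.\ iff the odd integers $\pm 1,\pm 3,\dots,\pm(m-1)$ are all nonzero mod $p$; since $m$ is even and $p$ is odd this is exactly $m<p$. For the converse, when $m<p$ one takes $\alpha=2$ and the form with only the anti-diagonal entries $a_i$ (all other $M_{ij}=0$), checks it satisfies every cocycle identity (the homogeneous relations on the remaining anti-diagonals are trivially met), and verifies non-degeneracy; translating conventions gives the displayed $\alpha_i=(-1)^{m/2-i+1}(m-2i+1)$ and the coefficient $2$ on $x_0^*\wedge x_1^*$. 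The main obstacle is step (a): one must bookkeep the cascade of vanishing conditions (including the boundary triples with $i$ or $j$ near $m$) carefully enough to be certain that $M$ really is anti-triangular, so that the leftover degrees of freedom strictly below the anti-diagonal cannot affect $\det M$; once that is secured, the determinant computation and the arithmetic condition $m<p$ follow immediately.
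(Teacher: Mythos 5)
Your argument is correct. Note first that the paper states this even-$m$ claim without proof (only the odd-$m$ analogue is proved), so the only comparison available is with that proof, and your strategy — write down the general homogeneous form, impose the cocycle identity on the surviving triples, solve the linear system, then decide non-degeneracy — is the same one; in two places you actually tighten it. In (i), the parity count $\dim K^{2,m}_{\bar 0}=2<m=\dim K^{2,m}_{\bar 1}$ disposes of all even $m\ge 4$ at once and makes precise that the exceptional case in the Claim is $m=2$, $p=3$; the two evaluations you then use, $\omega(x_1,y_2)=0$ from $(x_0,y_1,x_1)$ and $-3\,\omega(y_1,x_1)=0$ from $(y_1,y_1,y_1)$, are exactly what is needed, and the converse check of the displayed form at $p=3$ is routine (the paper's odd-$m$ argument kills $\omega(x_1,\cdot)$ by cocycle identities instead of a dimension count). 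In (ii), the condition coming from $(x_0,y_i,y_j)$ is $M_{j,i+1}+M_{i,j+1}=\delta_{i+j,m}(-1)^{(m-2i)/2}\,\omega(x_0,x_1)$ with index-$(m+1)$ terms absent, and your downward induction does establish $M_{ij}=0$ for $i+j>m+1$: the pairs $(i,m)$ clear row $m$, and each lower row clears using the symmetry of $M$ and the rows already treated; hence $\det M=\pm\prod_i a_i$ with $a_i=M_{i,m+1-i}$, and $2a_{m/2}=\alpha$ together with $a_i+a_{i+1}=(-1)^{m/2-i}\alpha$ forces $a_i=(-1)^{m/2-i}(m-2i+1)\alpha/2$, so the form is degenerate exactly when $\alpha=0$ or $p$ divides one of the odd integers $1,3,\dots,m-1$, i.e.\ when $m>p$; conversely the purely anti-diagonal form with $\alpha=2$ satisfies every identity, and under the paper's convention for evaluating $y_i^*\wedge y_j^*$ (the footnote's factor $(-1)^{|e_k||e_j|}$) its coefficients are precisely the stated $\alpha_i=(-1)^{m/2-i+1}(m-2i+1)$, so your ``translation of conventions'' is legitimate. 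The only quibble is terminological: the entries forced to vanish are those with $i+j>m+1$, the free ones lie on the other side of the anti-diagonal, and neither affects the determinant of an anti-triangular matrix, so your ``below'' is ambiguous but harmless; your anti-triangularity argument in fact makes the ``only if'' direction more complete than the consistency-of-the-system argument the paper records in the odd-$m$ case.
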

\subsubsection{A derivation yielding a non-trivial extension} Consider the outer derivation given by
\[
{\mathscr D}=x_1\otimes x_0^*.
\]
This derivation is even and restricted. Moreover, it has the $p$-property (see~\eqref{SConp} for definitions) with $\gamma=0$ and $a_0=l_1x_1,~l_1\in\mathbb K$. Because $\gamma$ is zero, it follows that we only have to consider the case $(b)$ of Theorem \ref{MainTh}. Direct computations show that ${\mathscr D}^2=0$ and ${\mathscr D}^*=-{\mathscr D}$. It follows that $\sum_{i=1}^{p-1}\sigma_i^{\mathfrak{a}}(a,b)=0$ (see~\eqref{CondP2}), that $(C,P)$ is a restricted $2$-cocycle, and that the maps $\Omega$ and $T$ in Section \ref{D0omega0} are identically zero. Because $(\Omega,T)$ has to be a restricted coboundary, we can choose $Z_{\Omega}=tx_1+sy_m,~t,s\in\mathbb K,$ to obtain
\begin{gather*}
0=\omega(Z_{\Omega},[a,b])\qquad \text{ and }\qquad 0=\omega\bigl(Z_{\Omega},\tilde{a}^{[p]}\bigr),\\
\text{ for all }\qquad a,b\in K^{2,m}\qquad \text{ and }\qquad \tilde{a} \in K^{2,m}_{\bar{0}}.
\end{gather*}
 The conditions of Theorem \ref{MainTh} are satisfied and we can choose $b_0=l_2x_1,~l_2\in \mathbb K$ and $\sigma\in \mathbb K$ arbitrary to build a non-trivial double extension.

\section{Conclusions and outlook}
\begin{enumerate}\itemsep=0pt
 \item[(1)] In \cite{AM}, Novikov Lie algebras appearing within symplectic structures are discussed, and a~classification is offered in low dimensions. It is interesting to superize the results of \cite{AM}.

 \item[(2)] In \cite{D}, a certain class of free nilpotent Lie algebras possessing a symplectic structure is studied. The investigation is based on the fact that for a quasi-Frobenius Lie algebra we must have (see \cite{BC})
\[
\mathrm{dim}(\mathfrak{z}(\mathfrak{a}))\leq \mathrm{dim}( \mathfrak{a}/[\mathfrak{a}, \mathfrak{a}]).
\]
It is easy to supersize this formula, and the proof is similar to that given in \cite{BC}. It seems to be possible to construct a super-analog of the Lie algebras in \cite{D}, and their results can be superized. However, it is not clear yet whether these Lie (super)algebras are restricted or not.
On the other hand, the classification of restricted $p$-nilpotent Lie algebras in low dimension have been obtained in \cite{DU, MS}. It is interesting to superize this classification, and select among those Lie algebras that are quasi-Frobenius.

 \item[(3)] A nilpotent orthosymplectic quasi-Frobenius Lie superalgebra cannot be Frobenius. To see this, take the restriction of the form on the even part ${\mathfrak g}_{{\bar 0}}$ and use the result \cite[Proposition~6]{GR}. Engel's theorem is used to prove the result, which is also valid over a field with characteristic non-zero. It would be interesting to examine the case where the form is periplectic.

 \item[(4)] Simple modular Lie (super)algebras over a field of positive characteristic may have a~degenerate Killing form and some of them do not admit a Cartan matrix, for instance, those of Cartan type. Hence, the techniques used in \cite{GR} to show that real or complex semi-simple Lie algebras cannot be Frobenius will not work in characteristic $p$. Furthermore, we only know the classification of modular Lie algebras over a field of characteristic $p>3$, see \cite{S2}. The problem of classification remains open in the super setting. It is interesting to investigate which modular Lie superalgebras are Frobenius.

 \item[(5)] If the Lie superalgebra $\mathfrak{a}$ is Frobenius (i.e., $\omega_\mathfrak{a}=d_{\text{CE}}(f_\mathfrak{a}) $), can its double extension $\mathfrak{g}$ be Frobenius as well? In the case where $\omega $ is odd, then $\mathfrak{g}$ cannot be Frobenius. Indeed, suppose that $\omega_\mathfrak{g}=d_{\text{CE}}(f_\mathfrak{g})$. It follows that
\begin{gather*}
0=\omega_\mathfrak{g}(x^*, x^*)=f_\mathfrak{g} ([x^*, x^*])= f_\mathfrak{g} (2 a_0) \\ \text{while} \ 1= \omega_\mathfrak{g}(x^*, x)=f_\mathfrak{g} ([x^*, x])=f_\mathfrak{g} (2 a_0),
\end{gather*}
a contradiction. In the case where $\omega_\mathfrak{a}$ is even, the linear map $f_\mathfrak{g}$ must satisfy:
\begin{gather*}
f_\mathfrak{g}(x) = -\lambda^{-1}, \qquad f_\mathfrak{g} ({\mathscr D}(a))= \lambda^{-1} \omega (Z_\Omega, a), \\ f_\mathfrak{g}([a,b]_\mathfrak{a})=f_\mathfrak{a} ([a,b]_\mathfrak{a}) +\lambda^{-1} \omega_\mathfrak{a} ({\mathscr D} (a)+{\mathscr D}^* (a), b).
\end{gather*}
One could describe all Lie superalgebras $\mathfrak{a}$ for which $f_\mathfrak{g}$ exists.
\end{enumerate}

\subsection*{Acknowledgements}
We are grateful for the valuable feedback we received from the referees, which enabled us to significantly improve the presentation of this paper.

\pdfbookmark[1]{References}{ref}

\LastPageEnding

\end{document}